\documentclass[10pt]{amsart}

\usepackage{hyperref}
\hypersetup{colorlinks,citecolor=blue,linkcolor=blue}

\usepackage{amsfonts, amssymb, wasysym}
\usepackage{multicol}
\usepackage{amsmath}
\usepackage{latexsym}
\usepackage{mathrsfs}
\usepackage{epsfig}
\usepackage{graphicx}
\usepackage{tikz}
\usetikzlibrary{positioning}

\numberwithin{equation}{section}
\newtheorem{theorem}{Theorem}[section]

\newtheorem{lemma}[theorem]{Lemma}
\newtheorem{corollary}[theorem]{Corollary}
\newtheorem{conjecture}[theorem]{Conjecture}
\usepackage{cyr}

\DeclareSymbolFont{mcyr}{OT1}{wncyr}{m}{n}
\DeclareMathSymbol\Lob{\mathop}{mcyr}{76}

\newcommand{\Z}{{\mathbb Z}}
\newcommand{\R}{{\mathbb R}}
\newcommand{\Q}{{\mathbb Q}}
\newcommand{\C}{{\mathbb C}}
\newcommand{\A}{{\mathbb A}}

\newcommand{\SO}{\mathrm{SO}}

\newcommand{\PO}{\mathrm{PO}}

\newcommand{\PGO}{\mathrm{PGO}}

\newcommand{\Isom}{\mathrm{Isom}}
\newcommand{\Gal}{\mathrm{Gal}}

\renewcommand{\o}{{\mathfrak o}}
\renewcommand{\O}{{\mathrm O}}


\newcommand{\integers}{{\mathbb Z}}
\newcommand{\realnos}{{\mathbb R}}
\newcommand{\rationalnos}{{\mathbb Q}}
\newcommand{\complexnos}{{\mathbb C}}

\begin{document}

\title{Salem numbers and arithmetic hyperbolic groups}

\author{Vincent Emery, John G. Ratcliffe and Steven T. Tschantz}


\address{Mathematisches Institut, University of Bern, Sidlerstrasse 5, 3012 Bern, Switzerland}
\address{Department of Mathematics, Vanderbilt University, Nashville, TN 37240
\vspace{.1in}}

\email{vincent.emery@math.ch}
\email{j.g.ratcliffe@vanderbilt.edu}
\email{steven.tschantz@vanderbilt.edu}

\subjclass{11E10, 11F06, 11R06, 20H10, 30F40}

\date{}

\keywords{arithmetic group, closed geodesic, hyperbolic lattice, 
quadratic form, Salem number, totally real number field}

\begin{abstract}
In this paper we prove that there is a direct relationship between Salem numbers and translation lengths 
of hyperbolic elements of arithmetic hyperbolic groups that are determined by a quadratic form 
over a totally real number field. 
As an application we determine a sharp lower bound for the length of a closed geodesic in a noncompact 
arithmetic hyperbolic $n$-orbifold for each dimension $n$. 
We also discuss a ``short geodesic conjecture'', and prove its
equivalence with ``Lehmer's conjecture'' for Salem numbers. 
\end{abstract}

\maketitle

\section{Introduction} 
\label{sec:1}

\subsection{Salem numbers and translation lengths}

In this paper, a {\it Salem number} is a real algebraic integer $\lambda > 1$ that is
conjugate to $\lambda^{-1}$ and whose remaining conjugates lie on the unit circle. We
denote by $\deg\lambda$ the degree of the minimal polynomial of $\lambda$.
Note that we allow $\deg\lambda = 2$ (which occurs exactly when $\lambda+\lambda^{-1}\in
\integers$).

Salem numbers occur in many areas of mathematics (see the surveys \cite{G-H} and \cite{Smyth}). 
In this paper, we show that Salem numbers are directly related to the translation lengths 
of hyperbolic elements of an arithmetic hyperbolic group of the simplest type. 
Our main results are Theorems \ref{thm:1} and \ref{thm:2} below which sharpen and generalize to all dimensions  
results obtained by T. Chinburg, W. Neumann and A. Reid  in dimension 2 and 3 (see
\cite[\S\,4]{N-R}).

Let $H^n$ be the hyperbolic $n$-space, and $\Isom(H^n)$
its group of isometries.
An element $\gamma \in \Isom(H^n)$ is {\it hyperbolic} if there is a unique geodesic 
$L$ in $H^n$, called the {\it axis} of $\gamma$, along which $\gamma$ acts as a translation
by a positive distance $\ell(\gamma)$ called the {\it translation length} of $\gamma$. 
If $\Gamma \subseteq {\rm Isom}(H^n)$ is a lattice, 
then most of the elements of $\Gamma$ are hyperbolic. 
Among arithmetic lattices $\Gamma \subseteq\Isom(H^n)$, those defined  in terms of an admissible quadratic form 
over a totally real number field $K$ are said to be \emph{of the simplest type} (see
\S\,\ref{sec:groups-simplest}).  In this case, $\Gamma$ is \emph{defined over $K$}.

We introduce the following notation: $\Gamma^{(2)}$ is the subgroup of $\Gamma$ generated
by the squares of elements of $\Gamma$ (it is of finite index in $\Gamma$). For $K \subseteq\C$ a
number field and $\lambda \in \C$ an algebraic number, $\deg_K(\lambda)$ will denote the degree
$[K(\lambda):K]$. In particular, we have $\deg_\Q(\lambda) = \deg\lambda$. 

\begin{theorem}  
\label{thm:1}
Let $\Gamma \subseteq \Isom(H^n)$ be an arithmetic lattice   
of the simplest type defined over a totally real number field $K$.
Let $\gamma$ be a hyperbolic element of $\Gamma$, and let $\lambda = e^{\ell(\gamma)}$.  
If $n$ is even or $\gamma \in \Gamma^{(2)}$, then $\lambda$ is a Salem number 
such that $K \subseteq \rationalnos(\lambda +\lambda^{-1})$ and $\deg_K(\lambda) \leq n+1$. 

Conversely, if $\lambda$ is a Salem number, 
and $K$ is a subfield of $\rationalnos(\lambda +\lambda^{-1})$ such that $\deg_K(\lambda) \leq n+1$, 
then there exist an arithmetic lattice $\Gamma \subseteq\Isom(H^n)$ of the simplest type defined 
over $K$ and a hyperbolic element $\gamma$ in $\Gamma$ such that $\lambda =  e^{\ell(\gamma)}$. 
\end{theorem}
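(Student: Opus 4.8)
\smallskip
\noindent\emph{Proof proposal (for the converse direction).}
The plan is to produce an admissible quadratic form and a suitable isometry directly from the field $L:=\rationalnos(\lambda)$. Put $L^+:=\rationalnos(\lambda+\lambda^{-1})$; since $\lambda$ is a Salem number, $L^+$ is totally real, $[L:L^+]=2$, the nontrivial automorphism $x\mapsto\ov x$ of $L/L^+$ interchanges $\lambda$ and $\lambda^{-1}$, and $\lambda\notin L^+\supseteq K$. Hence $K(\lambda)=L$ and $\deg_K(\lambda)=[L:K]=2m$ with $m:=[L^+:K]$, so the hypothesis $\deg_K(\lambda)\le n+1$ becomes $2m\le n+1$. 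Regard $L$ as a $K$-vector space and let $q$ be the symmetric $K$-bilinear form $q(x,y):=\mathrm{Tr}_{L/K}(x\,\ov y)$. It is nondegenerate, being the trace form twisted by an automorphism, and since $\ov{\lambda y}=\lambda^{-1}\ov y$ the multiplication map $T_\lambda\colon x\mapsto\lambda x$ satisfies $q(T_\lambda x,T_\lambda y)=q(x,y)$; thus $T_\lambda\in\mathrm{O}(q,K)$.

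The step that does the real work is computing the signature of $q$ at each real place $v$ of $K$. One has $L\otimes_K\realnos\cong\prod_{w\mid v}\big(L\otimes_{L^+,w}\realnos\big)$, the product over the real places $w$ of $L^+$ above $v$, and the factor at $w$ is $\realnos\times\realnos$ — with its two real embeddings of $L$ interchanged by $x\mapsto\ov x$ — when the conjugate $s_w$ of $\lambda+\lambda^{-1}$ at $w$ has $|s_w|>2$, and is $\complexnos$, with $x\mapsto\ov x$ acting as complex conjugation, when $|s_w|<2$; the value $|s_w|=2$ cannot occur because $\pm1$ is never a conjugate of a Salem number. On an $\realnos\times\realnos$ factor $q$ is a hyperbolic plane, of signature $(1,1)$, while on a $\complexnos$ factor $q(z,z)=2|z|^2$ is positive definite. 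The defining property of a Salem number is precisely that $\lambda+\lambda^{-1}$ is the only conjugate of itself of absolute value $\ge2$ (the remaining ones being of the form $2\cos\theta$), and this conjugate lies above the identity place $v_0$ of $K$. Hence $q$ has signature $(2m-1,1)$ at $v_0$ and is positive definite at every other real place of $K$; consequently $f:=q\perp\langle\,1,\dots,1\,\rangle$, with $n+1-2m\ge0$ summands $1$, is an admissible quadratic form over $K$ of dimension $n+1$ and signature $(n,1)$.

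Finally I would assemble the arithmetic group. The minimal polynomial of a Salem number has constant term $1$, so $\lambda$ is a unit and $\ov\lambda=\lambda^{-1}$ is an algebraic integer; thus $\Lambda_0:=\mathcal O_K[\lambda]$ is a free $\mathcal O_K$-lattice of rank $2m$ in $L$, stable under $T_\lambda$ and $T_\lambda^{-1}$, on which $q$ takes values in $\mathcal O_K$ because $\mathrm{Tr}_{L/K}$ maps $\mathcal O_L$ into $\mathcal O_K$. Set $\Lambda:=\Lambda_0\oplus\mathcal O_K^{\,n+1-2m}$ and let $\Gamma$ be the subgroup of $\mathrm{O}(f,\Lambda)$ preserving a sheet of the hyperboloid model of $H^n$ attached to $f$; by construction this is an arithmetic group of isometries of $H^n$ of the simplest type defined over $K$. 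The element $\gamma:=T_\lambda\oplus\mathrm{id}$ lies in $\mathrm{O}(f,\Lambda)$, and since its action on the $\realnos\times\realnos$ summand at $v_0$ is $\diag(\lambda,\lambda^{-1})$ with $\lambda>0$, it preserves each sheet, whence $\gamma\in\Gamma$. Over $\realnos$ at $v_0$ the eigenvalues of $\gamma$ are $\lambda$ and $\lambda^{-1}$ on the hyperbolic-plane summand, unimodular numbers on the positive definite summands, and $1$ on $\realnos^{\,n+1-2m}$; therefore $\gamma$ is a hyperbolic isometry of $H^n$, its axis being the geodesic cut out by the $(1,1)$-subspace, with translation length $\log\lambda$, i.e. $\lambda=e^{\ell(\gamma)}$, as required. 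I expect the only genuine obstacle to be the signature computation of the twisted trace form, which is exactly where the Salem condition enters; the integrality claims, the sheet-preservation of $\gamma$, and the identification of $\mathrm{O}(f,\Lambda)$ as an arithmetic group of the simplest type are then routine.
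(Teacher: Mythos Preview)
Your proof is correct, and in fact the construction you arrive at is the same one the paper builds, just presented far more conceptually. In the paper's Theorem~6 the Gram matrix $A$ has entries $a_{jk}=\tfrac12\sum_i r_i^{\,j-k}=\tfrac12\,\mathrm{Tr}_{L/K}(\lambda^{j-k})$ in the basis $1,\lambda,\dots,\lambda^{2m-1}$, and the hyperbolic element is the companion matrix of $p$, i.e.\ multiplication by $\lambda$; so up to the harmless factor $\tfrac12$ your twisted trace form $q(x,y)=\mathrm{Tr}_{L/K}(x\,\ov y)$ and your $T_\lambda$ are exactly the paper's $f$ and $C$. What differs is the route to admissibility: the paper checks it by writing the entries of $A$ as power sums, invoking Newton's identities, and redoing the construction at each embedding $\sigma$ of $K$; your decomposition of $L\otimes_K\realnos_v$ into $\realnos\times\realnos$ and $\complexnos$ factors according to whether the conjugate of $\lambda+\lambda^{-1}$ exceeds $2$ in absolute value makes the Salem condition visibly equivalent to ``signature $(2m-1,1)$ at $v_0$ and positive definite elsewhere,'' which is cleaner. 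Both arguments then pad by $\langle 1,\dots,1\rangle$ to reach rank $n+1$. The minor points you flagged as routine really are: $p(x)$ has constant term $1$ (its roots pair off as $r,r^{-1}$), so $\lambda^{-1}\in\mathcal O_K[\lambda]$ and $T_\lambda^{\pm1}$ stabilize your lattice; and since $T_\lambda$ acts on the hyperbolic-plane factor as $\mathrm{diag}(\lambda,\lambda^{-1})$ with $\lambda>0$, it preserves the sign pattern and hence the sheet.
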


Note that we have $\deg \lambda = 2$ in Theorem \ref{thm:1} only if $K = \rationalnos$. See Theorem
\ref{thm:2} below for a result without the assumption $\gamma \in \Gamma^{(2)}$ for $n$ odd.
Theorem \ref{thm:1} has the following corollary with no restriction on dimension. 

\begin{corollary}  
\label{cor:1}
Let $\lambda$ be a Salem number. 
Then for each integer $n\geq 2$, there exist an arithmetic lattice $\Gamma \subseteq\Isom(H^n)$
of the simplest type defined over $\rationalnos(\lambda + \lambda^{-1})$ and a hyperbolic element  
$\gamma$ of $\Gamma$ such that $\lambda = e^{\ell(\gamma)}$. 
\end{corollary}

The set $\mathcal{S}_d$ of all Salem numbers of degree $d$ has a least element
(see Lemma 3 of \cite{G-H}). 
All non-cocompact arithmetic lattices in $\Isom(H^n)$ are arithmetic lattices of the simplest type 
defined over $\rationalnos$ \cite{L-M}. 
For each even integer $n\geq 2$, let 
$$b_n = \mathrm{min}\{\log \lambda : \lambda\ \hbox{is a Salem number with $\deg \lambda \leq n$}\}.$$
The next corollary follows from Theorem \ref{thm:1} and a sharp example for $n = 2$. 

\begin{corollary}  
\label{cor:2}
If $\Gamma \subseteq\Isom(H^n)$ is a non-cocompact arithmetic lattice, with $n$ even, 
and $C$ is a closed geodesic in $H^n/\Gamma$,  
then $\mathrm{length}(C) \geq b_n$,  and this lower bound is sharp for each even $n \geq 2$. 
\end{corollary}

Let $\lambda_{m,\ell}$ be the $\ell$th largest Salem number of degree $m$. 
The Salem numbers $\lambda_{m,1}$ for $m \leq 10$ are listed in \cite{L} and decrease as $m$ increases. 
 We conclude that $b_n = \log(\lambda_{n,1})$ for $n\leq 10$.  
The smallest known Salem number is Lehmer's number $\lambda_{10,1} = 1.1762808182\ldots$\ .
The values of $\lambda_{m,1}$ for $m \leq 54$ have been determined \cite{MRW}, and so 
$$b_{10} =  b_{12} = \cdots = b_{54} = 0.1623576120\ldots\ . $$
For fixed $m \le 24$, lists of the first few $\left\{\lambda_{m, 1}, \lambda_{m,2}, \dots \right\}$
can be found in \cite{Moss}.

\subsection{Lehmer's problem} 
\label{sec:intro-lehmer}
``Lehmer's problem'' refers to the question whether the Mahler measure of an irreducible noncyclotomic
polynomial in $\Z[x]$ can be arbitrarily closed to $1$. It is largely believed that the answer is
``no'', and that Lehmer's number $\lambda_{10,1}$ realizes the smallest possible value
(see \cite[\S\,2]{Smyth} and the survey \cite{Smyth2}). A weaker formulation is the following conjecture.
\begin{conjecture} 
        \label{conj:Lehmer}
        Lehmer's number $\lambda_{10,1}$ is the smallest Salem number.
\end{conjecture}

By an \emph{arithmetic hyperbolic orbifold} we mean a quotient $H^n/\Gamma$, with $\Gamma
\subseteq\Isom(H^n)$ an arithmetic lattice (in particular, such a quotient has finite volume).
Since non-cocompact arithmetic lattices $\Gamma \subseteq\Isom(H^n)$ 
are all of the simplest type defined over $\Q$, 
it follows directly from Theorem \ref{thm:1} that Conjecture \ref{conj:Lehmer} is equivalent to
the following.
\begin{conjecture} 
        \label{conj:geod}
       The minimal possible length of a closed geodesic in a noncompact, arithmetic, 
       even-dimensional, hyperbolic orbifold is $b_{10} = \log(\lambda_{10,1})$.
\end{conjecture}
Conjecture \ref{conj:geod} is a variation of the classical ``short geodesic conjecture'',
which predicts that there is a minimal length amongst closed geodesics on arithmetic
hyperbolic surfaces (see \cite[Conjecture~11]{G-H}). The formulation in Conjecture \ref{conj:geod} differs
from this classical version in the sense that it fixes the field of definition to be $\Q$, and allows
arbitrarily large dimensions.

Note that Theorem \ref{thm:1} also suggests -- together with Conjecture \ref{conj:Lehmer}
-- that $\frac{1}{2} \log(\lambda_{10,1})$ is a lower bound for the length of a closed
geodesic on any arithmetic hyperbolic $n$-orbifold of the simplest type, with $n$ odd. We will discuss again the
specific case of those orbifolds that are non-compact in Conjecture \ref{conj:geod-odd} below.

\subsection{Square-rootable Salem numbers}  
\label{sec:intro-square-rootable}

Let $\lambda$ be a Salem number, let $K$ be a subfield of $\rationalnos(\lambda +\lambda^{-1})$, 
and let $p(x)$ be the minimal polynomial of $\lambda$ over $K$.  
We say that $\lambda$ is {\it square-rootable over} $K$ if there exist a totally positive element $\alpha$ of $K$ 
and a monic palindromic polynomial $q(x)$, whose even degree coefficients are in $K$ and whose odd degree coefficients 
are in $\sqrt{\alpha}K$, such that $q(x)q(-x) = p(x^2)$. 

\begin{theorem} 
\label{thm:2}
Let $\Gamma \subseteq\Isom(H^n)$ be an arithmetic lattice, with $n$ odd,  
of the simplest type defined over a totally real number field $K$.    
Let $\gamma$ be a hyperbolic element of $\Gamma$, and let $\lambda = e^{2\ell(\gamma)}$.  
Then $\lambda$ is a Salem number which is square-rootable over $K$.

Conversely, 
if $\lambda$  is a Salem number and $K$ is a subfield of $\rationalnos(\lambda +\lambda^{-1})$, 
and $n \geq 3$ is an odd integer with $\deg_K(\lambda) \leq n+1$, and $\lambda$ is square-rootable over $K$, 
then there exist an arithmetic lattice $\Gamma \subseteq\Isom(H^n)$ of the simplest type defined 
over $K$ and a hyperbolic element $\gamma$ in $\Gamma$ such that $\lambda^{\frac{1}{2}} =  e^{\ell(\gamma)}$. 
\end{theorem}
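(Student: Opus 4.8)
The plan is to treat the two directions separately, both by analysing the action of a hyperbolic isometry on the quadratic space underlying the arithmetic group. Recall that an arithmetic group $\Gamma$ of the simplest type defined over the totally real field $K$ comes from an admissible quadratic form $f$ of signature $(n,1)$ over $K$: the group $\Gamma$ is commensurable with the integral points of the orthogonal group, and $\Gamma$ acts on $H^n$ identified with a component of the set of $f$-negative lines. A hyperbolic element $\gamma$ corresponds to an element $T$ of $\mathrm{O}(f)$ (defined over $K$, after possibly rescaling) whose action on the $2$-plane spanned by the two fixed light-like vectors (the endpoints of the axis) is multiplication by $e^{\ell(\gamma)}$ and $e^{-\ell(\gamma)}$ respectively. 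Thus $\mu := e^{\ell(\gamma)}$ is an eigenvalue of $T$, and since $T$ preserves $f$, its characteristic polynomial over $K$ is palindromic, so $\mu^{-1}$ is also a root; on the remaining $(n-1)$-dimensional positive-definite subspace $T$ acts by an isometry of a positive definite form, hence all its other eigenvalues have absolute value $1$. This already shows $\mu$ is a Salem number in the weak sense; that $\mu$ is a genuine algebraic integer of degree $\geq 2$ follows because $\gamma$ is hyperbolic (so $\mu\neq 1$) and $T$ can be conjugated into $\mathrm{GL}_{n+1}(\mathcal{O}_K)$ up to finite index, and one checks $\deg_K \mu \le n+1$.

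For the forward direction of Theorem 2, with $n$ odd and $\lambda = e^{2\ell(\gamma)} = \mu^2$: the point is that $\mu$ itself need not be defined over $K$ (for odd $n$ the construction of $\Gamma$ in Theorem 1 required $\gamma\in\Gamma^{(2)}$, and for a general hyperbolic $\gamma$ one only controls $\mu^2$). First I would show $\lambda = \mu^2$ is a Salem number with $K\subseteq \rationalnos(\lambda+\lambda^{-1})$; this follows from the $\mu$-analysis above applied to $\gamma^2$, or directly since squaring sends the Salem number $\mu$ (possibly over a quadratic extension of $K$) to a Salem number over $K$. Let $p(x)$ be the minimal polynomial of $\lambda$ over $K$. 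The key structural fact to extract is that $q(x) := \prod (x - \nu)$ over the "positive-square-root" eigenvalues $\nu$ of $T$ (i.e. $\nu^2$ ranges over the roots of $p$, with $\nu$ chosen as actual eigenvalues of $T$ acting on the relevant subspace) satisfies $q(x)q(-x) = \pm p(x^2)$, is monic and palindromic, and — this is the heart — has even-degree coefficients in $K$ and odd-degree coefficients in $\sqrt{\alpha}\,K$ for an appropriate totally positive $\alpha\in K$. To identify $\alpha$, I would use that the product of all the relevant eigenvalues $\nu$ equals $\pm\det$ of $T$ restricted to an invariant subspace, which lies in $K^\times$, while $q(0) = \prod(-\nu)$; the obstruction to $q$ being $K$-rational is exactly a sign/square-class, controlled by the spinor norm or by $\det(T|_W)$ on the relevant $2$-dimensional hyperbolic plane $W$, which is $-1$ times a square times an element measuring the "twist" — and admissibility of $f$ forces this class to be represented by a totally positive $\alpha\in K$. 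I expect the careful bookkeeping here — producing $\alpha$ and verifying the parity condition on the coefficients of $q$ simultaneously at the real place and arithmetically — to be the main obstacle.

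For the converse, assume $\lambda$ is a Salem number, square-rootable over $K$ via $\alpha$ and $q(x)$ with $q(x)q(-x) = p(x^2)$, and $n$ is odd with $\deg_K\lambda \le n+1$. I would use $q$ to manufacture the isometry directly: set $\mu = \lambda^{1/2}$ chosen so that $\mu$ is a root of $q$; then $q$ is the minimal polynomial of $\mu$ over $K(\sqrt{\alpha})$ (using that odd coefficients lie in $\sqrt\alpha K$), and $K(\mu) = K(\sqrt\alpha)(\mu)$ has degree $\le n+1$ over $K(\sqrt\alpha)$ hmm — actually I would build a companion-type isometry on the $K(\sqrt\alpha)$-vector space $K(\sqrt\alpha)[x]/(q(x))$, equipped with the symmetric bilinear form coming from the trace pairing twisted appropriately so that multiplication by $\mu$ becomes an isometry; the palindromic property of $q$ guarantees such a form exists, and that it has signature $(n,1)$ at the real place where $\mu>1$ is realised (one conjugate pair is hyperbolic, the rest are on the circle giving a definite complement) and is definite at all other real places of $K(\sqrt\alpha)$ — here the totally-positivity of $\alpha$ is what makes $K(\sqrt\alpha)$ totally real and lets one descend to an admissible form over $K$ by Galois averaging / restriction of scalars, so that the resulting orthogonal group is arithmetic of the simplest type over $K$. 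The element "multiplication by $\mu$" is then hyperbolic with translation length $\log\mu = \tfrac12\log\lambda$, so $\lambda^{1/2} = e^{\ell(\gamma)}$ as required. The dimension bookkeeping ($\deg_K\lambda \le n+1$ with $n$ odd, so $\deg_{K(\sqrt\alpha)}\mu = \deg_K\lambda$ and the ambient form has rank $n+1$ after padding with a definite form) I would check is consistent, invoking Theorem 1's converse machinery for the padding and the admissibility verification; the genuinely new input over Theorem 1 is precisely the passage through $K(\sqrt\alpha)$ enabled by square-rootability.
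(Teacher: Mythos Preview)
Your plan has the right overall shape but misses the specific mechanisms the paper uses in both directions, and in the forward direction the gap is genuine.

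\textbf{Forward direction.} You propose to identify the totally positive $\alpha$ via ``the spinor norm or $\det(T|_W)$ on the relevant $2$-dimensional hyperbolic plane.'' This is not how the paper pins down $\alpha$, and I don't see how your suggestion would work: the spinor norm lands in $K^\ast/(K^\ast)^2$ and does not directly produce a totally positive representative, nor does it explain why the odd coefficients of $q$ lie in a single coset $\sqrt{\alpha}\,K$. The paper's mechanism is an algebraic-group fact you do not invoke: for $n$ odd, the commensurator of the arithmetic lattice in $\mathrm{PO}(f)$ is contained in $\mathrm{PO}(f)_K$, and via a Galois cohomology computation (Hilbert~90 for the center) one has $\mathrm{PO}(f)_K \cong \mathrm{PGO}(f)_K$. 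Concretely, this says $\gamma' = \tfrac{1}{\sqrt{b}}B$ for some $B \in \mathrm{GO}(f,K)$ with multiplier $b = \mu(B)$. Then $\alpha = b$; total positivity follows because $B^tAB = bA$ forces $f$ and $bf$ to have the same signature at every real place, and admissibility of $f$ makes those signatures $(n,1)$ or definite. Once $\gamma'$ is over $L = K(\sqrt b)$, the minimal polynomial $q$ of $\lambda^{1/2}$ over $L$ does the job, and the parity of the coefficients is read off from $q^\tau(x) = q(-x)$ where $\tau$ generates $\mathrm{Gal}(L/K)$.

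\textbf{Converse direction.} Your plan to build the form on $K(\sqrt\alpha)[x]/(q(x))$ and then ``descend to $K$ by Galois averaging / restriction of scalars'' has a dimension problem: restriction of scalars from $K(\sqrt\alpha)$ to $K$ doubles the rank, so you would get a form in $2\deg_K\lambda$ variables over $K$, not $\deg_K\lambda$. The paper avoids this entirely: it reuses the $K$-form $f$ and companion matrix $C$ of $p(x)$ from the proof of Theorem~6 (so the form is over $K$ from the start, in $\deg_K\lambda$ variables), then constructs an explicit square root $D = V^{-1}\mathrm{diag}(s_1,\ldots,s_m)V$ of $C$ using the Vandermonde matrix $V$ for the roots of $p$. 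A Galois argument over the splitting field of $p(x^2)$ shows $\sqrt\alpha\,D$ is over $K$ (so $D$ is a similitude over $K$, not an isometry over $K$). The final, non-obvious step you do not mention is producing an arithmetic group containing $D$: the paper takes a congruence subgroup $\Phi$ of $\mathrm{O}'(f,\mathfrak{o}_K)$, checks $D\Phi D^{-1}\subseteq \mathrm{O}'(f,\mathfrak{o}_K)$ by a direct congruence computation, and sets $\Delta = \langle D, \Phi, D\Phi D^{-1}\rangle$, which is then commensurable with $\mathrm{O}'(f,\mathfrak{o}_K)$ with $[\Delta:\Psi]=2$.
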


Any Salem number $\lambda$ is square-rootable over $\rationalnos(\lambda+\lambda^{-1})$, 
and so Theorem \ref{thm:2} implies the next Corollary, which improves Corollary
\ref{cor:1} in odd dimensions.

\begin{corollary}  
\label{cor:3}
Let $\lambda$ be a Salem number. 
Then for each odd integer $n \geq 3$, 
there exist an arithmetic lattice $\Gamma \subseteq\Isom(H^n)$ of the simplest type 
defined over $\rationalnos(\lambda+\lambda^{-1})$ and a hyperbolic element $\gamma$ of $\Gamma$ 
such that $\lambda^{\frac{1}{2}} = e^{\ell(\gamma)}$. 
\end{corollary}

For each odd positive integer $n$, let 
\begin{eqnarray*}
c_n & = & \mathrm{min}\{\textstyle{\frac{1}{2}}\log \lambda : \lambda\  \hbox{is a Salem number with} \ \deg \lambda \leq n+1, \\
& & \hspace{.32in} \hbox{which is square-rootable over}\  \mathbb{Q}\}. 
\end{eqnarray*}
The next corollary follows from Theorem \ref{thm:2} and a sharp example for $n = 3$. 

\begin{corollary}  
\label{cor:4}
If $\Gamma \subseteq\Isom(H^n)$ is a non-cocompact lattice, with $n$ odd, 
and $C$ is a closed geodesic in $H^n/\Gamma$,  
then $\mathrm{length}(C) \geq c_n$,   
and this lower bound is sharp for each odd integer $n \geq 3$. 
\end{corollary}

Our lower bound 
$$c_3 = \textstyle{\frac{1}{2}}\log \lambda_{4,6} = 0.4312773138 \ldots\, $$ 
for the length of a closed geodesic in an arithmetic, noncompact, hyperbolic 3-orbifold $H^3/\Gamma$ agrees with the 
lower bound given in \cite{N-R}. 

We also have determined (see \S\,\ref{sec:9}) that 
$$c_5 = c_7 = \textstyle{\frac{1}{2}}\log \lambda_{6,4} = 0.2294546519 \ldots\ $$
and 
$$c_9 = c_{11} = \cdots = c_{19} = b_{10} = 0.1623576120 \ldots\ .$$
This suggests a possible extension of Conjecture \ref{conj:geod} to include the case of odd-dimensional 
non-compact orbifolds. We state this in the following conjecture, which at this point should be considered 
as more speculative than Conjecture \ref{conj:geod}.

\begin{conjecture} 
        \label{conj:geod-odd}
       The minimal possible length of a closed geodesic in any noncompact arithmetic
       hyperbolic orbifold is $b_{10} = \log(\lambda_{10,1})$.
\end{conjecture}

\subsection{Outline}
\label{sec:outline}
Our paper is organized as follows:  In \S\,\ref{sec:2}, we present background material for the paper. 
In \S\,\ref{sec:3}, we prove some preliminary algebraic lemmas. 
In \S\,\ref{sec:4}, we prove some linear algebraic group lemmas. 
In \S\,\ref{sec:5}, we prove the first half of Theorem \ref{thm:1}. 
In \S\,\ref{sec:7}, we prove the second half of Theorem \ref{thm:1}.  
In \S\,\ref{sec:8}, we prove Theorem \ref{thm:2}. 
In \S\,\ref{sec:9}, we determine the values of $c_n$ for odd $n \leq 19$. 
In \S\,\ref{sec:10}, we give an example with $K$ an intermediate field between $\rationalnos$ 
and $\rationalnos(\lambda+\lambda^{-1})$. 

\subsection*{Acknowledgements}
We thank David Boyd and Alan Reid for helpful correspondence,  
and Ted Chinburg, Olivier Mila, and the referees for comments that helped improve  
the exposition of the paper. 
We also thank AIM for providing a congenial work environment for the authors during a SQuarRE on hyperbolic geometry 
beyond dimension three.  The first author is supported by the SNSF, project no. PP00P2\_157583.

\section{Background and notation} 
\label{sec:2}

\subsection{Salem numbers and polynomials}

Let $\lambda$ be a Salem number, and let $s(x)$ be its \emph{Salem polynomial}, i.e., the
minimal polynomial of $\lambda$ over $\Q$.   We refer to \cite{G-H} for standard facts
about Salem polynomials. Let us recall here that $s(x)$ is over $\Z$ and the roots of $s(x)$ occur in pairs of
reciprocal numbers, namely $\left\{ \lambda, \lambda^{-1} \right\}$ and pairs of
complex conjugate numbers on the unit circle. 
In particular the degree  $m=\deg\lambda$ is an even positive integer.
Moreover, $s(x)$ is a palindromic polynomial, that is, $s(x) = x^m s(x^{-1})$.


\subsection{The hyperboloid model for $H^n$}
Let $f$ be a quadratic form in $n+1$ variables with  
a real symmetric coefficient matrix $A = (a_{ij})$. 
Then we have  
$f(x) = x^tAx.$
Let $R$ be a subring of $\C$.
We say that $f$ is {\it over} $R$ if $a_{ij} \in R$ for all $i, j$. 
The {\it orthogonal group} of $f$ over $R$ is defined to be
\begin{align*}
{\rm O}(f,R) &= \{T \in {\rm GL}(n+1,R) : f(Tx) = f(x) \ \hbox{for all} \ x \in
\realnos^{n+1}\} \\
&= \{T\in {\rm GL}(n+1,R): T^tAT =A\}.
\end{align*}
The {\it orthogonal group} of $f$ is ${\rm O}(f) = {\rm O}(f,\C)$.

Let $f_n$ be the {\it Lorentzian quadratic form} in $n+1$ variables given by
$$f_n(x) = x_1^2+ \cdots + x_n^2 - x_{n+1}^2.$$
Then ${\rm O}(f_n,\realnos) = {\rm O}(n,1)$. 
The hyperboloid model of {\it hyperbolic $n$-space} is 
$$H^n = \{x \in \realnos^{n+1} : f_n(x) = -1 \ \ \hbox{and} \ \ x_{n+1} > 0\}.$$
Let ${\rm O}^+(n,1)$ be the subgroup of ${\rm O}(n,1)$ consisting of all $T\in {\rm O}(n,1)$ that leave $H^n$ invariant. 
Then ${\rm O}^+(n,1)$ has index 2 in ${\rm O}(n,1)$. 
Restriction induces an isomorphism from ${\rm O}^+(n,1)$ to ${\rm Isom}(H^n)$, and we may
identify these two groups.

Suppose that the quadratic form $f$ has signature $(n,1)$. 
This means that there exists $M \in \mathrm{GL}(n+1,\realnos)$ such that $f(Mx) = f_n(x)$ for all $x \in \realnos^{n+1}$. 
Then $M$ maps the set $\{x \in \realnos^{n+1}: f_n(x) < 0\}$ onto the set $\{x\in \realnos^{n+1}: f(x) < 0\}$. 
Hence the set $\{x\in \realnos^{n+1}: f(x) < 0\}$ is a cone with two connected components. 
If $R$ is a subring of $\realnos$,  
let $\mathrm{O}'(f,R)$ be the subgroup of $\mathrm{O}(f,R)$ consisting of all $T\in \mathrm{O}(f,R)$ that 
leave both components of the cone $\{x \in \realnos^{n+1} : f(x) < 0 \}$ invariant. 
Then $\mathrm{O}'(f, R)$ has index 2 in $\mathrm{O}(f, R)$, and  
\begin{align}
        M\mathrm{O}^+(n,1)M^{-1} &= \mathrm{O}'(f,\realnos).
        \label{eq:isom-M}
\end{align}

\subsection{Arithmetic groups of the simplest type}
\label{sec:groups-simplest}

Let $K \subseteq\R$ be a totally real number field, and   
let $\mathfrak{o}_K$ be its ring integers.
Let $f$ be a quadratic form over $K$ in $n+1$ variables with coefficient matrix $A =(a_{ij})$. 
The quadratic form $f$ is said to be {\it admissible}  if $f$ has signature $(n,1)$, 
and for each nonidentity embedding $\sigma: K \to \realnos$ the quadratic form $f^\sigma$ 
over $\sigma(K)$, with coefficient matrix $A^\sigma = (\sigma(a_{ij}))$, is positive definite. 

A subgroup $\Gamma$ of $\mathrm{O}^+(n,1)$ is an {\it arithmetic subgroup of the simplest type 
defined over} $K$ if there exists an admissible quadratic form $f$ over $K$ in $n+1$ variables,  
and there exists $M \in \mathrm{GL}(n+1,\realnos)$ such that $f(Mx) = f_n(x)$ for all $x \in \realnos^{n+1}$,  
and the subgroups $M\Gamma M^{-1}$ and $\mathrm{O}'(f,\mathfrak{o}_K)$ 
of $\mathrm{O}'(f,\realnos)$ are commensurable, that is,  
$M\Gamma M^{-1} \cap \mathrm{O}'(f,\mathfrak{o}_K)$
has finite index in both $M\Gamma M^{-1}$ and $\mathrm{O}'(f,\mathfrak{o}_K)$. 
In this case $\Gamma$ is a lattice of $\O^+(n,1)$, i.e., $\Gamma$ is a discrete subgroup of finite covolume 
in $\O^+(n,1)$.
In this paper, such a subgroup $\Gamma$ will be called {\it classical} if moreover  
$M$ and $f$ can be taken so that $M\Gamma M^{-1} \subseteq \mathrm{O}'(f,K)$.

\vspace{.1in}
\noindent{\bf Notation Variance.} 
Equation \eqref{eq:isom-M} provides an isomorphism between $\O'(f, \R)$ and $\O^+(n,1)$. 
We will identify $\O'(f, \R)$ with $\Isom(H^n)$ and replace $M\Gamma M^{-1}$ by $\Gamma$ 
in order to simplify notation when the matrix $M$ plays no essential role.  

\subsection{Arithmetic quotients} 
Let $\Gamma \subseteq\Isom(H^n)$ be an arithmetic lattice of the simplest type defined over $K$ 
with respect to an admissible quadratic form $f$. 
The hyperbolic orbifold $H^n/\Gamma$ is compact unless $K = \rationalnos$ 
and there exists $x \neq 0$ in $\rationalnos^{n+1}$ such that $f(x) = 0$ (see \S 12 of \cite{B-H}). 
Suppose that $K = \rationalnos$. 
If $n = 2, 3$, then $H^n/\Gamma$ is compact for some $f$ and not compact for some $f$.   
If $n > 3$, then $H^n/\Gamma$ is not compact, 
since there exists $x \neq 0$ in $\rationalnos^{n+1}$ such that $f(x) = 0$ (see \cite{C} p 75).


\section{Preliminary Algebraic Lemmas} 
\label{sec:3}

The point of departure of our work in this paper is our first lemma, which was motivated 
by Takeuchi's lemma \cite{T}. In this section $K$ denotes a number field and $\o_K$ its
ring of integers. The symbol $\A$ stands for the full ring of integers in $\C$.

\begin{lemma} 
\label{lem:1}
Let $A$ be an $n \times n$ matrix with coefficients in $K$ such that 
$A^m$ is over $\mathfrak{o}_K$ for some positive integer $m$. 
Then the characteristic polynomial $\mathrm{char}(A)$ of $A$ is over $\mathfrak{o}_K$. 
\end{lemma}
\begin{proof}
The roots of $\mathrm{char}(A^m)$ are the $m$th powers of the roots of $\mathrm{char}(A)$.
Since $\mathrm{char}(A^m)$ is over $\o_K$ and $\A$ is integrally closed,
it follows that the roots of $\mathrm{char}(A)$ are in $\A$. Thus its coefficients are in
$\A \cap K = \o_K$.
\end{proof}

The next lemma generalizes a well-known lemma ($K = \rationalnos$) for Salem polynomials. 

\begin{lemma} 
\label{lem:4}
Let $K \subseteq\R$, and let $p(x)$ be an irreducible monic polynomial over $\mathfrak{o}_K$ of 
degree $m=2\ell$ whose real roots are $\lambda$ 
and $\lambda^{-1}$, with $\lambda > 1$, and whose complex roots have absolute value equal to 1. 
Then there exists a unique monic irreducible polynomial $q(x)$ over $\mathfrak{o}_K$ of degree $\ell$, 
called the trace polynomial of $p(x)$,  
such that $p(x) = x^\ell q(x+ x^{-1})$.  
\end{lemma}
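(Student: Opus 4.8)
The plan is to construct $q(x)$ explicitly via the substitution $y = x + x^{-1}$ and then verify its properties using the earlier lemmas. First I would note that the roots of $p(x)$ pair off as $\{\mu, \mu^{-1}\}$: the real roots are $\lambda, \lambda^{-1}$, and each complex root $\mu$ on the unit circle has $\mu^{-1} = \bar\mu$ also a root since $p$ is over $\realnos$. (One should first check $\pm 1$ are not roots of $p$: if $p(1)=0$ then $x-1$ divides $p$ over $\mathfrak{o}_K$, contradicting irreducibility and $\deg p = 2\ell > 1$; similarly for $-1$. Hence all roots genuinely occur in reciprocal pairs $\mu \neq \mu^{-1}$, giving exactly $\ell$ pairs.) For each such pair set $\nu = \mu + \mu^{-1}$; these $\ell$ values $\nu_1, \dots, \nu_\ell$ are the proposed roots of $q$, so I would define $q(x) = \prod_{i=1}^\ell (x - \nu_i)$.

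The key identity to establish is $p(x) = x^\ell q(x + x^{-1})$. Since $(x-\mu)(x-\mu^{-1}) = x^2 - \nu x + 1 = x(\,(x+x^{-1}) - \nu\,)$ for a reciprocal pair, multiplying over all $\ell$ pairs gives $p(x) = \prod_i x\big((x+x^{-1}) - \nu_i\big) = x^\ell \prod_i\big((x+x^{-1}) - \nu_i\big) = x^\ell q(x+x^{-1})$, using that $p$ is monic of degree $2\ell$. This also shows $q$ is monic of degree $\ell$. Next, $q(x)$ is over $\mathfrak{o}_K$: the $\nu_i$ are algebraic integers (sums of algebraic integers, since $\mathbb{A}$ is a ring and the $\mu$'s are in $\mathbb{A}$ by integral closedness), and $q$ is fixed by the action of $\mathrm{Gal}(\overline{K}/K)$ — any automorphism permutes the roots of $p$, hence permutes reciprocal pairs (a pair is determined by the unordered set $\{\mu, \mu^{-1}\}$, and $\mu \mapsto \mu^{-1}$ commutes with Galois action on roots of the $\mathfrak{o}_K$-polynomial $p$), hence permutes the $\nu_i$; so the coefficients of $q$ lie in $K$, and being algebraic integers they lie in $\mathbb{A} \cap K = \mathfrak{o}_K$.

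For irreducibility of $q$ over $\mathfrak{o}_K$: by Lemma 3 it suffices to show $q$ is irreducible over $K$, or I can argue directly with Lemma 2. Suppose $q(x) = q_1(x) q_2(x)$ with $q_1, q_2$ monic over $\mathfrak{o}_K$ of positive degree. Then $x^\ell q(x+x^{-1}) = \big(x^{\deg q_1} q_1(x+x^{-1})\big)\big(x^{\deg q_2} q_2(x+x^{-1})\big)$; each factor $x^{\deg q_i} q_i(x+x^{-1})$ is a monic polynomial over $\mathfrak{o}_K$ (clearing the $x^{-1}$ denominators, since $q_i$ is monic its leading term survives) of positive degree, giving a nontrivial factorization of $p(x)$ over $\mathfrak{o}_K$ and contradicting irreducibility of $p$ (via Lemma 2). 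Finally, uniqueness: if $\tilde q$ also satisfies $p(x) = x^\ell \tilde q(x + x^{-1})$, then $x^\ell q(x+x^{-1}) = x^\ell \tilde q(x+x^{-1})$, so $q$ and $\tilde q$ agree as rational functions of $x$ on infinitely many points and hence $q = \tilde q$ as polynomials.

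The main obstacle I anticipate is the Galois-descent bookkeeping to show $q$ has coefficients in $\mathfrak{o}_K$: one must be careful that the relevant group is $\mathrm{Gal}(\overline{K}/K)$ acting on roots of $p$ as a polynomial over $\mathfrak{o}_K \subset K$ (legitimate since $K$ is perfect, so $\overline{K} = K_s$, as recalled in the Background section), and that conjugation-by-inverse on the unit circle / real reciprocal pairs is genuinely Galois-equivariant because it is induced by the polynomial relation $\mu \cdot \mu' = 1$ among roots of the palindromic-up-to-units $p$. An alternative that sidesteps Galois theory is to observe that the identity $p(x) = x^\ell q(x+x^{-1})$ together with $p$ being over $\mathfrak{o}_K$ forces, by equating coefficients and solving triangularly for the coefficients of $q$, that each coefficient of $q$ is a polynomial-with-integer-coefficients expression in the coefficients of $p$; this is elementary but computational, so I would present the Galois argument as the clean route and relegate the explicit recursion to a remark if needed.
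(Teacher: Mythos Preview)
Your proof is correct and follows essentially the same architecture as the paper's: pair roots into reciprocals, define $q$ by its roots $\nu_i = \mu_i + \mu_i^{-1}$, verify $p(x) = x^\ell q(x+x^{-1})$ via the factor identity $(x-\mu)(x-\mu^{-1}) = x\big((x+x^{-1})-\nu\big)$, and deduce irreducibility of $q$ from that of $p$ by the same lifting of a putative factorization. The one substantive difference is in showing $q$ has coefficients in $K$: you use Galois descent (automorphisms permute roots of $p$, hence permute reciprocal pairs since $\sigma(\mu^{-1}) = \sigma(\mu)^{-1}$, hence permute the $\nu_i$), whereas the paper sets up the linear system $M(b_0,\dots,b_\ell)^t = (a_0,\dots,a_\ell)^t$ relating the coefficients of $q$ and $p$ and argues $M$ is invertible from uniqueness of $q$ over $\realnos$, so $b_j \in K$. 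Your alternative remark about ``solving triangularly'' is precisely in the spirit of the paper's argument; the Galois route is cleaner conceptually, while the paper's linear-algebra route avoids any appeal to separability and works over an arbitrary subfield of $\realnos$ without further comment.
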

\begin{proof}
        The number $\lambda + \lambda^{-1}$ is an algebraic integer, so its minimal
        polynomial over $K$  must have coefficients in $\o_K$. Let $q(x)$ be this minimal
        polynomial. Since the roots of $p(x)$ occur in $\ell$ pairs of reciprocal
        conjugate numbers, we see that there are exactly $\ell$ distinct embeddings 
        of $K(\lambda + \lambda^{-1})$ into $\C$ over $K$. 
        Thus $q(x)$ has degree $\ell$, and it follows
        that the monic polynomial $x^\ell q(x + x^{-1})$ must be $p(x)$.
\end{proof}

The next lemma and its proof was communicated to us by David Boyd. 

\begin{lemma}  
\label{lem:5}
The number field $K$ is totally real if and only if there is a Salem number $\lambda$ 
such that $K = \rationalnos(\lambda +\lambda^{-1})$. 
\end{lemma}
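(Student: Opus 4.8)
The plan is to prove each direction separately. The backward direction is essentially immediate from the discussion already established in the background section: if $\lambda$ is a Salem number and $K = \rationalnos(\lambda + \lambda^{-1})$, then $K$ is generated by the single real algebraic number $\lambda + \lambda^{-1}$, so it suffices to show that every conjugate of $\lambda + \lambda^{-1}$ (over $\rationalnos$) is real. But the conjugates of $\lambda + \lambda^{-1}$ are among the numbers $r + r^{-1}$ where $r$ ranges over the conjugates of $\lambda$, and for a Salem number these are $\lambda + \lambda^{-1} \in \realnos$ together with the numbers $e^{i\theta} + e^{-i\theta} = 2\cos\theta \in \realnos$. Hence $\rationalnos(\lambda + \lambda^{-1})$ is totally real. (More precisely, the trace polynomial $q(x)$ from Lemma 4, applied to the Salem polynomial $p(x)$ of $\lambda$ over $\integers$, is the minimal polynomial of $\lambda + \lambda^{-1}$ over $\rationalnos$ and has all real roots.)

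For the forward direction, suppose $K$ is a totally real number field; I want to produce a Salem number $\lambda$ with $K = \rationalnos(\lambda + \lambda^{-1})$. By the primitive element theorem, write $K = \rationalnos(\beta)$ with $\beta$ a totally real algebraic integer (scale a primitive element by a rational integer to clear denominators). Let $\beta = \beta_1, \dots, \beta_d$ be its conjugates, all real. The idea is to find a rational integer $t$ large enough that $\alpha := \beta + t$ satisfies $\alpha_i = \beta_i + t > 2$ for every $i$; then for each $i$ the quadratic $x^2 - \alpha_i x + 1$ has two real reciprocal roots, the larger one $> 1$, and we want to arrange that exactly one conjugate of $\alpha$, say $\alpha_1$, is "promoted" to give a genuine Salem pair while the others give complex roots on the unit circle. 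Concretely, I would instead choose $t$ so that $\alpha_1 = \beta_1 + t > 2$ but $-2 < \beta_i + t < 2$ for $i \geq 2$ — this requires $\beta_1$ to be the unique largest conjugate, which we may assume after replacing $\beta$ by $\beta + (\text{something})$ is not quite enough, so the cleaner route is: replace $\beta$ by a suitable element whose largest conjugate is strictly separated from the others (e.g. $\beta \mapsto \beta^{2k}$ for large $k$ does not obviously work since that may not be primitive, but $\beta \mapsto N\beta$ plus a perturbation will). Having fixed such an $\alpha$, let $q(x)$ be the minimal polynomial of $\alpha$ over $\integers$, and set $p(x) = x^{\deg q}\, q(x + x^{-1})$. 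Then $p$ is a monic palindromic integer polynomial whose roots are $\lambda, \lambda^{-1}$ (from $\alpha_1 > 2$) together with pairs $e^{\pm i\theta}$ on the unit circle (from $-2 < \alpha_i < 2$), and $p$ is irreducible over $\rationalnos$ because $q$ is (running the factorization argument of Lemma 4 in reverse). Hence $\lambda$ is a Salem number, and $\lambda + \lambda^{-1} = \alpha_1$ generates the same field as $\alpha$, which is $K$.

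The main obstacle is the separation step: ensuring that after a $\rationalnos$-linear (really $\integers$-affine) adjustment, the chosen totally real integer generator of $K$ has one conjugate exceeding $2$ and all the others strictly inside the interval $(-2, 2)$, while remaining a primitive element. A clean way to handle this: start with any primitive integral generator $\gamma$ of $K$ with conjugates $\gamma_1 > \gamma_2 \geq \cdots$ (relabel so $\gamma_1$ is largest; if there are ties for the largest we may perturb $\gamma$ by adding a small-height element of $K$ to break them, or observe that a generic $\integers$-combination of a fixed integral basis has distinct conjugates). Then for a large rational integer $m$, the element $\beta = m\gamma$ has conjugates $m\gamma_i$ with the gap $m(\gamma_1 - \gamma_2)$ as large as we like; choose a rational integer $t$ with $2 < m\gamma_1 + t$ and $m\gamma_i + t < 2$ for all $i \geq 2$, which is possible once $m(\gamma_1 - \gamma_2) > 4$ (pick $t$ just below $2 - m\gamma_2$). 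We also need $m\gamma_i + t > -2$ for $i \geq 2$; this forces a two-sided constraint, so in fact I should additionally require $m(\gamma_1 - \gamma_d) < $ (nothing) — wait, that conflicts with wanting the gap large. The resolution is to impose the separation only between $\gamma_1$ and the \emph{second-largest} conjugate and, separately, control the \emph{spread} $\gamma_2 - \gamma_d$ of the remaining conjugates to be less than $4/m$; but scaling by $m$ expands that spread too. Therefore the correct maneuver is: first translate so that $\gamma_1 = 0$ is \emph{not} what we want either. The genuinely clean statement — and the one I would actually write — is that it suffices to find \emph{some} totally real algebraic integer $\alpha \in K$, primitive over $\rationalnos$, with $\alpha_1 > 2$ and $|\alpha_i| < 2$ for $i > 1$; such an $\alpha$ exists because the image of the ring of integers $\mathfrak{o}_K$ under the embedding $\mathfrak{o}_K \to \realnos^d$, $\gamma \mapsto (\gamma_1, \dots, \gamma_d)$, is a full lattice, so it meets the open region $\{x_1 > 2\} \cap \{|x_i| < 2, i > 1\}$ of positive volume — except that region has finite volume, while a lattice only meets \emph{every} sufficiently large region, not every region of positive volume. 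This is the real crux, and the honest fix is to intersect with a region of infinite volume: take instead $\{x_1 > 2\} \cap \{|x_i| < 2, \ 2 \le i \le d\}$ — still finite volume. Hmm. So one cannot argue purely by lattice covolume; one must use the specific structure. I expect the author's argument (due to Boyd) uses the following trick: pick any monic integer polynomial $g(y)$ whose roots are all real and lie in $(-2,2)$ and such that $\rationalnos[y]/(g)$ contains $K$ — e.g. take $g$ to be (a power-related modification of) a Chebyshev-like polynomial adapted to $K$ — then multiply: set $q(y) = (y - N)\tilde g(y)$ for a large integer $N$ and a factor $\tilde g$ chosen so that $q$ is irreducible with the right splitting field. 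I would flag this construction as the part requiring the most care, and present it as: by a theorem on totally real fields (or by a direct construction using that $K$ is totally real), there is a totally real algebraic integer generating $K$ all of whose conjugates but one lie in $(-2,2)$ with the remaining one $>2$; then apply the trace-polynomial construction of Lemma 4 in reverse to obtain the desired Salem number.
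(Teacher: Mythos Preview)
Your backward direction is fine and matches the paper's argument via the trace polynomial of Lemma~4.

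Your forward direction has a genuine gap, and you identify it yourself: the ``separation step'' of producing a totally real algebraic integer $\alpha$ that generates $K$, has one conjugate exceeding $2$, and has all remaining conjugates in the open interval $(-2,2)$. Your successive attempts (scaling by $m$, translating by $t$, lattice-volume heuristics) do not close this gap --- scaling expands the spread of the non-dominant conjugates just as much as the gap to the dominant one, and the target region in $\realnos^d$ has finite volume so a covolume argument alone cannot force a lattice point into it. You end by invoking an unspecified ``theorem on totally real fields,'' which is precisely what remains to be supplied.

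The paper fills this gap with a single citation: Pisot proved (Hilfssatz~1 of \cite{P}; see also Theorem~5.2.2 of \cite{B-D}) that every totally real number field $K$ can be written as $K = \rationalnos(\alpha)$ for some \emph{Pisot number} $\alpha$. A Pisot number is a real algebraic integer $\alpha > 1$ all of whose other conjugates have absolute value strictly less than $1$; since $K$ is totally real, those conjugates are in fact real and lie in $(-1,1)$. Then $2\alpha$ is an algebraic integer generating $K$ with $2\alpha > 2$ and all other conjugates in $(-2,2)$, which is exactly your desired $\alpha$. From there your plan is correct and coincides with the paper's: set $g(x) = 2^{\ell} f(x/2)$ for $f$ the minimal polynomial of $\alpha$, let $\lambda > 1$ solve $\lambda + \lambda^{-1} = 2\alpha$, and then $x^{\ell} g(x + x^{-1})$ is a Salem polynomial with $\rationalnos(\lambda + \lambda^{-1}) = \rationalnos(2\alpha) = K$.
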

\begin{proof}
If $\lambda$ is a Salem number, then $K = \Q(\lambda + \lambda^{-1})$ is totally
real, since all the roots of the trace polynomial of the Salem polynomial of $\lambda$ are real (see Lemma \ref{lem:4}). 
Conversely, suppose $K$ is totally real.  
Then there exists a Pisot number $\alpha$ such that $K = \rationalnos(\alpha)$ by 
Hilfssatz 1 of \cite{P}. Then $2\alpha$ is an algebraic integer
all of whose remaining conjugates lie in the interval $(-2, 2)$. 
Let $\lambda$ be the largest solution of $x + x^{-1} = 2\alpha$. 
Then $\lambda$ is a Salem number, and  
$K = \rationalnos(2\alpha) = \rationalnos(\lambda+ \lambda^{-1})$. 
\end{proof}

Let $K \subseteq\R$ be a totally real number field, and let $\lambda$ be a Salem number such 
that $K = \Q(\lambda + \lambda^{-1})$.  It is then clear that the quadratic form
$$f(x) = x_1^2 + \cdots + x_n^2 - (\lambda+\lambda^{-1}-2)x_{n+1}^2$$
is admissible over $K$, and thus $\O'(f, \o_K)$ is a classical arithmetic subgroup of
$\Isom(H^n)$ of the simplest type defined over $K$.


\section{Linear Algebraic Group Lemmas} 
\label{sec:4}

In this section, we prove some lemmas that require the theory of linear algebraic groups
\cite{B2}. 
Let $f$ be a quadratic form over a subfield $K$ of $\realnos$ of signature $(n,1)$. 
We are primarily interested in algebraic $K$-subgroups of $\mathrm{GL}(n+1,\complexnos)$ such as $\mathrm{O}(f)$
with the exception of the quotient algebraic $K$-group $\mathrm{PO}(f) = \mathrm{O}(f)/\{\pm I\}$ 
whose algebraic $K$-group structure is described in matrix terms in Lemma \ref{lem:7} below. 
For $G$ an algebraic $K$-group, we will denote its group of $K$-points by $G_K$.

\begin{lemma} 
        \label{lem:G-adjoint-K-points}
        If $G$ is an adjoint $K$-simple algebraic $K$-group, with $K \subseteq \R$, and $\Gamma \subseteq G_\R$ is 
        an arithmetic subgroup (i.e., commensurable with $G_{\o_K}$), then $\Gamma \subseteq G_K$.
\end{lemma}
\begin{proof}
        The result is well known for $G$ absolutely simple and adjoint, and is proved for instance in
        \cite[Prop.~1.2]{B-Pr}. One can use Weil restriction of scalars to deduce the
        general result from that particular case: if $G$ is adjoint $K$-simple, by the
        proof of \cite[Theorem~26.8]{KMRT} there exist a finite 
        field extension $L/K$ and an absolutely simple $L$-group $H$ such that
        $\mathrm{Res}_{L/K}(H) = G$. Since $G$ is adjoint, so must be $H$ and we conclude
        that $\Gamma \subseteq H_L = G_K$.
\end{proof}

Recall that by a \emph{classical} arithmetic subgroup of $\Isom(H^n)$ (of the simplest
type) we mean an arithmetic subgroup $\Gamma$ constructed in the $K$-points $\O'(f, K)$ for some
admissible quadratic form $f$ over $K$. For even dimensions this notion actually covers
all arithmetic hyperbolic lattices, as the following lemma shows.

\begin{lemma} 
\label{lem:6}
If $\Gamma \subseteq\Isom(H^n)$ is an arithmetic lattice, with $n$ even, 
then $\Gamma$ is classical. 
\end{lemma}
\begin{proof}
For $n$ even, all arithmetic lattices of $\Isom(H^n)$ are of the simplest type.
We may assume that $\Gamma \subseteq \O'(f, \R)$ and $\Gamma$ is commensurable to $\O'(f, \o_k)$ 
for some admissible quadratic form $f$ in $n+1$ variables over $K \subseteq \R$. 
Let $\SO(f)$ be the special orthogonal group of $f$, which is an algebraic $K$-group. 
Define $\psi: \mathrm{O}(f) \to \mathrm{SO}(f)$ by $\psi(A) = (\det A)A$. Then $\psi$ 
a $K$-homomorphism, whose restriction to $\O'(f, \R)$ is an isomorphism onto $\SO(f, \R)$.
In particular $\psi$ maps $\O'(f, K)$ isomorphically onto $\SO(f, K)$. Since for $n$ even, 
$\SO(f)$ is absolutely simple and adjoint (see \cite[\S\,26.A]{KMRT}),
the arithmetic subgroup $\psi(\Gamma)$ must be contained in
$\SO(f)_K = \SO(f, K)$ by Lemma \ref{lem:G-adjoint-K-points}. It follows that $\Gamma \subseteq\O'(f, K)$, 
and so $\Gamma$ is classical. 
\end{proof}

The situation for $n$ odd is not as easy, since $\SO(f)$ is not adjoint in this case. We need to
introduce more notation to deal with it. 
Let $f$ be a nondegenerate quadratic form in $n+1$ variables over a subfield $K$ of $\realnos$ with $n$ odd, 
and let $A$ be the coefficient matrix of $f$.  
The {\it general orthogonal group} of $f$ (cf.\,\cite{KMRT} p 154) is the group
$$\mathrm{GO}(f) = \{B \in \mathrm{GL}(n+1,\complexnos) :  B^tAB = bA \ \hbox{for some}\ b\in \complexnos\}.$$
If $B \in \mathrm{GO}(f)$ and $B^tAB = bA$ with $b \in \complexnos$, then $bI = B^tABA^{-1}$, and so $b \neq 0$
and $b$ is uniquely determined by $B$. We write $\mu(B)$ for $b$.  
If $c\in \complexnos^\times$, then $cI \in \mathrm{GO}(f)$ with $\mu(cI) = c^2$. 
Hence, the map $\mu: \mathrm{GO}(f) \to \complexnos^\times$ 
is an epimorphism with kernel equal to $\mathrm{O}(f) = \mathrm{O}(f, \complexnos)$. 
If $B \in \mathrm{GO}(f)$ and $b = \mu(B)$,   
then $(\det B)^2 = b^{n+1}$, and so $\det B = \pm b^{(n+1)/2}$. 
Let $D = \{cI_{n+1}: c\in \complexnos^\times\}$.  Then $D$ is a normal subgroup of $\mathrm{GO}(f)$. 
The {\it projective general orthogonal group} of $f$ is the group $\mathrm{PGO}(f) = \mathrm{GO}(f)/D$.

Let $\mathrm{GO}(f, K) = \mathrm{GO}(f)\cap \mathrm{GL}(n+1,K)$. 
Suppose $B \in \mathrm{GO}(f, K)$.  Then $B^tAB = bA$ with $b = \mu(B)$. 
Hence $b \in K^\times$. 
Now $B$ represents an equivalence from $f$ to $bf$ over $K$, 
and so $f$ and $bf$ have the same signature $(p, q)$.  
If $p \neq q$, we must have $b > 0$. 
If $f$ is an admissible quadratic form over a totally real number field $K$ and $n \geq 3$, 
then we have that $\sigma(b) > 0$ for each embedding $\sigma: K \to \realnos$, 
that is, $b$ is a {\it totally positive} element of $K$. 

\begin{lemma} 
\label{lem:7}
Let $f$ be a quadratic form in $n+1$ variables over a subfield $K$ of $\realnos$ of signature $(n,1)$ with $n$ odd 
and $n \geq 3$. 
Let $\mathrm{PO}(f)$ be the algebraic $K$-group $\mathrm{O}(f)/\{\pm I\}$, 
and let $\mathrm{PO}(f)_K$ be the group of $K$-points of $\mathrm{PO}(f)$. 
Then 
$$\mathrm{PO}(f)_K = \{\{\pm \textstyle{\frac{1}{\sqrt{b}}}B\}: B \in \mathrm{GO}(f,K)\ \hbox{and}\ b = \mu(B)\}.$$
\end{lemma}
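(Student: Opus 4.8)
The plan is to prove the two inclusions separately, exploiting the exact sequence $1 \to \mathrm{O}(f) \to \mathrm{GO}(f) \xrightarrow{\mu} \complexnos^\ast \to 1$ and the fact that $n+1$ is even (so $n$ odd), which lets us extract $\sqrt{b}$ and land back in $\mathrm{O}(f)$. The identification of the algebraic $K$-group $\mathrm{PO}(f)$ with $\mathrm{PGO}(f) = \mathrm{GO}(f)/D$ is the geometric content: the point to keep in mind is that over $\complexnos$, every scalar has a square root, so $\mathrm{O}(f,\complexnos)/\{\pm I\} \cong \mathrm{GO}(f,\complexnos)/D$ via $\{\pm B\} \mapsto BD$, with inverse $B D \mapsto \{\pm \frac{1}{\sqrt{b}} B\}$ where $b = \mu(B)$ (well-defined since replacing $B$ by $cB$ replaces $b$ by $c^2 b$ and $\frac{1}{\sqrt{b}}B$ by $\pm\frac{1}{\sqrt{b}}B$). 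This isomorphism is defined over $K$ as a morphism of algebraic groups, and the content of the lemma is to read off what it does on $K$-points.

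First I would show the right-hand side is contained in $\mathrm{PO}(f)_K$. Take $B \in \mathrm{GO}(f,K)$ with $b = \mu(B) \in K^\ast$. Because $f$ is admissible over a totally real field and $n \geq 3$, the discussion preceding the lemma gives that $b$ is totally positive; in particular $b > 0$, so $\frac{1}{\sqrt{b}}B \in \mathrm{GL}(n+1,\realnos)$ and a direct computation from $B^tAB = bA$ shows $(\frac{1}{\sqrt{b}}B)^t A (\frac{1}{\sqrt{b}}B) = A$, i.e. $\frac{1}{\sqrt{b}}B \in \mathrm{O}(f,\realnos) \subseteq \mathrm{O}(f,\complexnos)$. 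Hence $\{\pm\frac{1}{\sqrt{b}}B\}$ is a well-defined element of $\mathrm{PO}(f) = \mathrm{O}(f,\complexnos)/\{\pm I\}$. To see it is a $K$-point, I would argue that under the $K$-isomorphism $\mathrm{PO}(f) \cong \mathrm{PGO}(f)$ it corresponds to the class $BD$, and $B \in \mathrm{GO}(f,K)$ represents a $K$-point of $\mathrm{PGO}(f)$ (here one uses that $\mathrm{GO}(f,K) \to \mathrm{PGO}(f)_K$ is surjective, e.g. because $H^1(K,\mathbb{G}_m) = 1$ by Hilbert 90, or more elementarily because a $K$-rational point of the quotient lifts to a $K$-point of $\mathrm{GO}(f)$). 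Thus $\{\pm\frac{1}{\sqrt{b}}B\} \in \mathrm{PO}(f)_K$.

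Conversely, I would take an arbitrary element $g \in \mathrm{PO}(f)_K$. Via the $K$-isomorphism $\mathrm{PO}(f)_K \cong \mathrm{PGO}(f)_K$, $g$ corresponds to a $K$-point of $\mathrm{PGO}(f)$, which (again by the surjectivity of $\mathrm{GO}(f,K) \to \mathrm{PGO}(f)_K$) is the image of some $B \in \mathrm{GO}(f,K)$. Setting $b = \mu(B) \in K^\ast$, the same scaling computation as above shows $\frac{1}{\sqrt{b}}B \in \mathrm{O}(f,\complexnos)$ and that its class in $\mathrm{O}(f,\complexnos)/\{\pm I\}$ is exactly $g$. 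Hence $g = \{\pm\frac{1}{\sqrt{b}}B\}$ with $B \in \mathrm{GO}(f,K)$, which is the desired form. Combining the two inclusions gives the equality.

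The main obstacle is making precise that the abstract quotient $\mathrm{PO}(f)$ and the matrix group $\mathrm{PGO}(f)$ have the same $K$-structure, and that a $K$-point of the quotient lifts to $\mathrm{GO}(f,K)$ — i.e. the surjectivity of $\mathrm{GO}(f,K) \to \mathrm{PGO}(f)_K$. This is where one needs the central isogeny $D = \mathbb{G}_m$ and the vanishing of $H^1(K,\mathbb{G}_m)$; for $K$ a number field one could alternatively lift directly by choosing any preimage matrix and clearing denominators, but the cohomological statement is cleanest. Everything else — the totally-positivity of $b$, the square-root extraction valid because $n+1$ is even, and the scaling computation $(\frac{1}{\sqrt b}B)^tA(\frac{1}{\sqrt b}B)=A$ — is routine.
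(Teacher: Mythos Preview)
Your proposal is correct and follows essentially the same route as the paper: establish the $K$-isomorphism $\mathrm{PO}(f)\cong\mathrm{PGO}(f)$ (the paper does this carefully via the Universal Mapping Property and Galois descent on $\overline K$-points), then use the exact sequence $1\to D\to\mathrm{GO}(f)\to\mathrm{PGO}(f)\to 1$ together with $H^1(K,\mathbb{G}_m)=0$ to get surjectivity of $\mathrm{GO}(f,K)\to\mathrm{PGO}(f)_K$. One small imprecision: the lemma only assumes $K\subseteq\realnos$ and signature $(n,1)$, not that $f$ is admissible over a totally real field, so your appeal to total positivity of $b$ overshoots the hypotheses---but $b>0$ already follows from $p\neq q$ in the signature, and in any case $\sqrt{b}\in\complexnos$ is all that is needed.
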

\begin{proof}
Let $\pi: \mathrm{O}(f) \to \mathrm{PO}(f)$ and $\eta: \mathrm{GO}(f) \to \mathrm{PGO}(f)$ be the quotient maps. 
Then $\pi$ and $\eta$ are $K$-homomorphisms of algebraic $K$-groups by Theorem 6.8 of \cite{B2}. 
The inclusion map $\upsilon: \mathrm{O}(f) \to \mathrm{GO}(f)$ is a $K$-homomorphism.  
By the Universal Mapping Property (\cite{B2} p 94), the inclusion $\upsilon: \mathrm{O}(f) \to \mathrm{GO}(f)$ 
induces a $K$-homomorphism $\overline\upsilon: \mathrm{PO}(f) \to \mathrm{PGO}(f)$ 
such that $\overline\upsilon\pi = \eta\upsilon$. 
If $B \in \mathrm{O}(f)$,  
then $\overline\upsilon(\{\pm B\}) = DB$, and so $\overline\upsilon$ is a monomorphism.  
Now assume that $B \in \mathrm{GO}(f)$, and 
let $b = \mu(B)$.  Then $\det B = \pm b^{(n+1)/2}$. 
Hence $\det(\textstyle{\frac{1}{\sqrt{b}}}B) = \pm 1$. We have that $\overline\upsilon(\{ \pm \textstyle{\frac{1}{\sqrt{b}}}B\}) = DB$, 
and so $\overline\upsilon$ is onto, and therefore $\overline\upsilon$ is an isomorphism.
In particular the restriction to $K$-points $\overline\upsilon: \PO(f)_K \to \PGO(f)_K$ is
an isomorphism.

The short exact sequence of algebraic $K$-groups 
$$ 1 \to D\  {\hookrightarrow}\ \mathrm{GO}(f) \  {\buildrel \eta\over \longrightarrow}\ \mathrm{PGO}(f) \to 1$$
determines an exact sequence of Galois cohomology groups and homomorphisms
$$ 1 \to D_K\  {\longrightarrow}\ \mathrm{GO}(f)_K \  {\buildrel \eta\over \longrightarrow}\ 
\mathrm{PGO}(f)_K {\longrightarrow}\ H^1(K,D)$$
by the discussion in \S 1.3 of \cite{B-S} and Proposition 1.17 and Corollary 1.23 of \cite{B-S}. 
Since $H^1(K, D) = 0$ (by Proposition 1 on p 72 of \cite{Serre} and induction on $n$), we   
obtain $\eta(\mathrm{GO}(f)_K) = \mathrm{PGO}(f)_K$. 
We have that $\mathrm{GO}(f)_K = \mathrm{GO}(f,K)$, and so 
$$\mathrm{PGO}(f)_K = \{DB : B \in  \mathrm{GO}(f,K)\}.$$
Therefore 
$$ \mathrm{PO}(f)_K = \overline\upsilon^{-1}(\mathrm{PGO}(f)_K )= 
\{\{\pm \textstyle{\frac{1}{\sqrt{b}}}B\}: B \in \mathrm{GO}(f,K)\ \hbox{and}\ b = \mu(B)\}.$$

\vspace{-.25in}
\end{proof}

\begin{lemma} 
\label{lem:9}
Let $f$ be an admissible quadratic from in $n+1$ variables over a totally real number field K, with $n$ odd and $n\geq 3$,  
let $\Gamma$ be a subgroup of ${\rm O}'(f,\R)$ that is commensurable to ${\rm O}'(f,\o_K)$,  
and let $\overline\Gamma$ be the image of $\Gamma$ in ${\rm PO}(f,\R)$. 
Then $\overline\Gamma \subseteq {\rm PO}(f)_K$.
\end{lemma}
\begin{proof}
Let $\pi: \O(f) \to \PO(f)$ be the natural projection, defined by $\pi(A) = \left\{\pm A \right\}$. 
It is a $K$-homomorphism that induces an isomorphism from $\O'(f, \R)$
to $\PO(f, \R)$ and an isomorphism from $\O'(f, K)$ to $\PO(f,K)$.

The group ${\rm PSO}(f)$ is adjoint, and $K$-simple, since ${\rm PSO}(f)$ is absolutely simple for
$n\neq 3$, and $\R$-simple for $n=3$ (since ${\rm PSO}(f, \R) \cong \mathrm{PGL}_2(\C)$ is a simple group).  
Let $\Gamma_0 = \Gamma\cap {\rm SO}'(f,\R)$.
Then $\pi(\Gamma_0) \subseteq {\rm PSO}(f)_K$ by Lemma \ref{lem:G-adjoint-K-points}. 
If $\Gamma = \Gamma_0$, we are done, so assume $\Gamma \neq \Gamma_0$. 
Let $\overline\Gamma = \pi(\Gamma)$ and $\overline\Gamma_0 = \pi(\Gamma_0)$. 

Let $\Lambda = {\rm SO}(f,\o_K)$ and $\overline\Lambda = {\rm PSO}(f,\o_K)$. 
If $H$ is a subgroup of $\O(f)$ (or $\PO(f)$), let $C(H)$ be the commensurator of $H$ in $\O(f)$ (or $\PO(f)$). 
There exists $R \in \O(f,K)$ with $\det R = -1$ by Theorem 3.20 of \cite{A}. 
Then $R\in C(\Lambda)$, since $\O(f,K) \subseteq C(\Lambda)$. 
Hence $\overline R = \pi(R)\in C(\overline{\Lambda})$ by Lemma 15.10 of \cite{B1}. 
We have that $C(\Gamma_0) = C(\Lambda)$, since $\Gamma_0$ and $\Lambda$ 
are commensurable. 
Hence $C(\overline\Gamma_0) = C(\overline\Lambda)$ by Lemma 15.10 of \cite{B1}. 

Let $B \in \Gamma$ with $\det B = -1$, and let $\overline B = \pi(B)$. 
Then $\overline B \in\overline\Gamma$, and so $\overline B \in C(\overline\Gamma_0)$. 
Hence $\overline R\overline B \in C(\overline\Gamma_0)$. 
If $H$ is a subgroup of ${\rm PSO}(f)$, let $C_0(H)$ be the commensurator of $H$ in ${\rm PSO}(f)$. 
As $\det(RB) = 1$, we have that $\overline R \overline B \in C_0(\overline\Gamma_0)$. 
In view of Lemma \ref{lem:7}, we have that ${\rm PSO}(f)_K \subseteq C_0(\overline\Lambda)$ by the same argument 
that ${\rm O}(f,K) \subseteq C(\Lambda)$.  
Hence $C_0(\overline\Lambda)$ is Zariski-dense in ${\rm PSO}(f)$, since 
${\rm PSO}(f)_K$ is Zariski-dense in ${\rm PSO}(f)$. Therefore
$$C_0(\overline\Gamma_0) = C_0(\overline\Lambda) = {\rm PSO}(f)_K$$
by the $K$-version of Lemma 15.11 of \cite{B1} (see {\it Remarques} on p 106 of \cite{B1}). 
Hence $\overline R \overline B \in {\rm PSO}(f)_K$.   
Therefore $\overline B\in \PO(f)_K$. 
Thus $\overline\Gamma \subseteq \PO(f)_K$. 
\end{proof}

\begin{lemma} 
\label{lem:10}
Let $\Gamma \subseteq \Isom(H^n)$ be an arithmetic lattice of the simplest type defined over
a totally real number field $K$, with $n$ odd and $n\geq 3$, 
and let $\Gamma^{(2)}$ be the subgroup of $\Gamma$ generated by the squares of elements of $\Gamma$. 
Then $\Gamma^{(2)}$ is classical.
\end{lemma}
\begin{proof}
We may assume that $\Gamma\subseteq \O'(f,\R)$ and $\Gamma$ is commensurable to $\O'(f, \o_K)$ for some admissible quadratic form $f$ over $K$. 
If $\gamma \in \Gamma$, then $\gamma^2$ is over $K$ by Lemmas \ref{lem:7} and \ref{lem:9}. 
Hence $\Gamma^{(2)} \subseteq \O'(f, K)$, and so $\Gamma^{(2)}$ is classical.
\end{proof}


\section{Translation lengths and Salem numbers}  
\label{sec:5}

An isometry $\gamma$ of $H^n$ is {\it hyperbolic} if there exists a geodesic in $H^n$ 
along which $\gamma$ acts as a translation by a positive distance $\ell(\gamma)$. 
There are two types of hyperbolic isometries of $H^n$: {\it hyperbolic translations} 
and {\it loxodromic hyperbolic} isometries. 
An isometry $\gamma$ of $H^n$ is a hyperbolic translation if in the upper half-space model 
of hyperbolic $n$-space, $\gamma$ is conjugate to a magnification $\mu(x) = kx$ with $k > 1$.

Let $\gamma$ be an element of $\mathrm{O}^+(n,1)$.  
Define the {\it nonroot of unity degree}, $\mathrm{deg}_\infty(\gamma)$, of $\gamma$ to be 
the number of eigenvalues of $\gamma$ that are not roots of unity.  

\begin{lemma} 
\label{lem:11}
Let $\gamma$ be a hyperbolic element of $\mathrm{O}^+(n,1)$. 
Then $\mathrm{deg}_\infty(\gamma)$ is even and $\mathrm{deg}_\infty(\gamma) \geq 2$  
with $\mathrm{deg}_\infty(\gamma) = 2$ if and only if there is a positive integer $m$ 
such that $\gamma^m$ is a hyperbolic translation. 
\end{lemma}
\begin{proof}
This follows easily from Proposition 1 of \cite{G}.
\end{proof}

Lemmas \ref{lem:6} and \ref{lem:10} and the next theorem imply the first half of Theorem
\ref{thm:1}. 

\begin{theorem} 
\label{thm:4}
Let $\Gamma \subseteq \Isom(H^n)$ be a classical arithmetic lattice 
of the simplest type defined over a totally real number field $K$. 
Let $\gamma$ be a hyperbolic element of $\Gamma$, let $\ell(\gamma)$ be the translation length of $\gamma$, 
and let $\lambda = e^{\ell(\gamma)}$.  Then 
\begin{enumerate}
\item $\lambda$ is a Salem number, and 
$$
\mathrm{deg}_K(\lambda) = \mathrm{deg}_\infty(\gamma) \leq n+1;$$
\item $K$ is a subfield of $\mathbb{Q}(\lambda + \lambda^{-1})$,   
and  
$$[\mathbb{Q}(\lambda+\lambda^{-1}): K] = \mathrm{deg}_K(\gamma)/2.$$
\end{enumerate}
\end{theorem}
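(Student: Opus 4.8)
The plan is to convert the geometric hypothesis into a statement about the characteristic polynomial of a matrix representing $\gamma$, and then to play off against each other two facts: at the identity place the eigenvalue $\lambda = e^{\ell(\gamma)}$ is forced \emph{off} the unit circle, while at every nonidentity place admissibility forces \emph{all} eigenvalues onto the unit circle. First I would pick $M$ with $f(Mx) = f_n(x)$ so that $\gamma' := M\gamma M^{-1} \in \mathrm{O}'(f,K)$ for an admissible form $f$ over $K$ with coefficient matrix $A$; conjugation preserves eigenvalues, so by Proposition~1 of \cite{G} the matrix $\gamma'$ is diagonalizable over $\complexnos$ with eigenvalues $\lambda,\lambda^{-1}$ and $n-1$ complex numbers of absolute value~$1$. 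Since $\Gamma' := M\Gamma M^{-1}$ is commensurable with $\mathrm{O}'(f,\mathfrak{o}_K)$, some power $(\gamma')^{r}$ lies in $\mathrm{O}'(f,\mathfrak{o}_K)$, so Lemma~1 makes $p(x) = \mathrm{char}(\gamma')$ monic of degree $n+1$ over $\mathfrak{o}_K$; moreover $(\gamma')^tA\gamma' = A$ gives $\mathrm{char}(\gamma'^{-1}) = p$, so the root set of $p$ is closed under inversion and every eigenvalue of $\gamma'$ is a unit of $\mathbb{A}$ (it and its inverse are roots of the monic polynomial $p$ over $\mathbb{A}$). Finally, for each nonidentity embedding $\sigma\colon K\to\realnos$ the form $f^\sigma$ is positive definite, so $\mathrm{O}(f^\sigma,\realnos)$ is compact and every root of $p^\sigma$ has absolute value~$1$.

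Let $q(x)$ be the minimal polynomial of $\lambda$ over $K$; its roots are roots of $p$, hence lie in $\mathbb{A}$, so $q$ is monic over $\mathbb{A}\cap K = \mathfrak{o}_K$, irreducible over $K$, of degree $\deg_K(\lambda)$, and $q$ divides $p$. I would first show $\lambda^{-1}$ is a root of $q$: otherwise the only root of $q$ off the unit circle is the simple root $\lambda$, so $|q(0)| = \lambda > 1$; but $q(0)$ is a unit of $\mathfrak{o}_K$, so $\prod_\sigma|\sigma(q(0))| = 1$, while $|\sigma(q(0))| = 1$ for every nonidentity $\sigma$ (the roots of $q^\sigma$ divide those of $p^\sigma$, hence lie on the unit circle), a contradiction. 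Thus the roots of $q$ are $\lambda,\lambda^{-1}$ together with complex numbers of absolute value~$1$ that, $q$ being real, pair as $\mu,\overline{\mu} = \mu^{-1}$, so $\deg_K(\lambda) = 2\ell$ is even. Next, $\lambda$ is a Salem number: each $\rationalnos$-conjugate of $\lambda$ has the form $g(\lambda)$ for $g\in\mathrm{Gal}(\overline{\rationalnos}/\rationalnos)$ and is a root of $q^\sigma$ with $\sigma = g|_K$, hence lies in $\{\lambda,\lambda^{-1}\}$ (when $\sigma = \mathrm{id}$) or on the unit circle (when $\sigma \ne \mathrm{id}$); since the minimal polynomial $s(x)$ of $\lambda$ over $\rationalnos$ is monic and irreducible over $\integers$ and has $\lambda^{-1}$ among its roots (because $q\mid s$ and $q(\lambda^{-1}) = 0$), $s$ is a Salem polynomial.

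For the degree identities I would argue as follows. No root of $q$ is a root of unity: such a root would be a $\rationalnos$-conjugate of $\lambda$, forcing the irreducible polynomial $s$ to equal a cyclotomic polynomial, impossible since $\lambda > 1$; hence $\deg_K(\lambda)\le\deg_\infty(\gamma)$. Conversely every non-root-of-unity eigenvalue $\mu$ of $\gamma'$ is a root of $q$: if its minimal polynomial $q'$ over $K$ were distinct from $q$, then $q'$ would be coprime to $q$ and so have no root off the unit circle (the only such roots of $p$ are $\lambda,\lambda^{-1}$, which belong to $q$), whence $q'^\sigma$ would have all roots on the unit circle for \emph{every} embedding $\sigma$ including the identity, so every $\rationalnos$-conjugate of $\mu$ would lie on the unit circle and $\mu$ would be a root of unity by Kronecker's theorem — contradiction. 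Thus $\deg_K(\lambda) = \deg q = \deg_\infty(\gamma)\le n+1$. To see $K\subseteq\rationalnos(\lambda+\lambda^{-1})$, suppose $g\in\mathrm{Gal}(\overline{\rationalnos}/\rationalnos)$ fixes $\lambda+\lambda^{-1}$; then $g(\lambda)+g(\lambda)^{-1} = \lambda+\lambda^{-1}$, so $g(\lambda)\in\{\lambda,\lambda^{-1}\}$ is off the unit circle, and since $g(\lambda)$ is a root of $q^{\,g|_K}$ the embedding $g|_K$ must be the identity; hence $g$ fixes $K$ pointwise, so $K$ lies in the fixed field $\rationalnos(\lambda+\lambda^{-1})$ of the stabilizer of $\lambda+\lambda^{-1}$. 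Applying Lemma~4 to $q$ produces its trace polynomial $t(x)$ over $\mathfrak{o}_K$ of degree $\ell$ with $q(x) = x^\ell t(x+x^{-1})$, which is the minimal polynomial of $\lambda+\lambda^{-1}$ over $K$; since $K\subseteq\rationalnos(\lambda+\lambda^{-1})$ this gives $[\rationalnos(\lambda+\lambda^{-1}):K] = \ell = \deg_\infty(\gamma)/2\le(n+1)/2$, and also $\rationalnos(\lambda) = K(\lambda)$, whence $\deg(\lambda) = [\rationalnos(\lambda):\rationalnos] = \deg_K(\lambda)\,[K:\rationalnos]$. Statement~(3) follows since $K = \rationalnos(\lambda+\lambda^{-1})$ precisely when $[\rationalnos(\lambda+\lambda^{-1}):K] = 1$, i.e.\ $\deg_\infty(\gamma) = 2$.

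The point requiring the most care is the pair of inclusions identifying the roots of the minimal polynomial $q$ with the non-root-of-unity eigenvalues of $\gamma'$: this is exactly where admissibility (positive-definiteness of $f^\sigma$ at the nonidentity places) must be combined with Kronecker's theorem, since over a proper totally real field $K$ an irreducible polynomial can have all roots on the unit circle without being cyclotomic. Everything else is a matter of organizing norm computations and Galois-theoretic arguments around the single dichotomy ``off the unit circle'' versus ``root of unity'', supplied respectively by hyperbolicity at the identity place and by compactness at the other places.
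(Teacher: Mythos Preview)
Your proof is correct and follows the same overall strategy as the paper's: isolate the irreducible $\mathfrak{o}_K$-factor $q$ of $\mathrm{char}(\gamma')$ containing $\lambda$, use admissibility at nonidentity places together with Kronecker's theorem to separate the non-root-of-unity eigenvalues from the rest, and finish with the trace polynomial of Lemma~4. The tactics differ in several places. To get $\lambda^{-1}$ as a root of $q$, the paper inspects the constant term directly, while you run a norm argument on the unit $q(0)\in\mathfrak{o}_K$. To prove $\lambda$ is Salem and obtain $\deg(\lambda)=\deg_K(\lambda)[K:\rationalnos]$, the paper builds the explicit product $p_1^{\ast}=\prod_\sigma p_1^{\sigma}$, shows it lies over $\integers$ by a Galois-permutation argument, and identifies it with the minimal polynomial of $\lambda$ over $\integers$; you instead argue directly with $\mathrm{Gal}(\overline{\rationalnos}/\rationalnos)$-conjugates of $\lambda$ and recover the degree formula from $K\subseteq\rationalnos(\lambda+\lambda^{-1})$ via the tower law. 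For that inclusion the paper counts degrees, whereas you give a fixed-field argument. Your route is a bit more streamlined; the paper's has the advantage of explicitly exhibiting the Salem polynomial as $\prod_\sigma q^{\sigma}$, which it reuses later in Theorem~6.

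One small point to tighten: from ``no root of $q$ is a root of unity'' and ``every non-root-of-unity eigenvalue is a root of $q$'' you have matched the \emph{set} of non-root-of-unity eigenvalues with the root set of $q$, but $\deg_\infty(\gamma)$ counts with multiplicity. Since $\lambda$ is a simple eigenvalue and $q$ is irreducible, $q$ occurs exactly once in the factorization of $p$, so every non-root-of-unity eigenvalue is simple; adding this sentence closes the gap. Also, when $\sigma=\mathrm{id}$ the conjugate $g(\lambda)$ is only known to be a root of $q$, hence in $\{\lambda,\lambda^{-1}\}$ \emph{or} on the unit circle, not necessarily in $\{\lambda,\lambda^{-1}\}$ as you wrote --- but this does not affect the conclusion that $s$ is a Salem polynomial.
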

\begin{proof}
(1) We may assume that $\Gamma \subseteq \O'(f, K)$ and $\Gamma$  is commensurable to ${\rm O}'(f,\o_K)$ for some admissible 
quadratic form $f$ in $n+1$ variables over $K$. 
Since $\Gamma \cap \O'(f, \o_K)$ has finite index in $\Gamma$, 
there exists a positive integer $m$ such that $\gamma^m \in \mathrm{O}'(f,\mathfrak{o}_K)$. 
Let $p(x)$ be the characteristic polynomial of $\gamma$. 
By Lemma \ref{lem:1} we have that $p(x)$ is over $\o_K$.
The real roots of $p(x)$ are $\lambda$ and $\lambda^{-1}$, as simple roots, and possibly $\pm 1$, as simple or multiple roots, and 
the complex roots occur in complex conjugate pairs of the form $e^{\pm i\theta}$ for some real number $\theta$ 
with $0 < \theta < \pi$ by Proposition 1 of \cite{G}.

Let $p(x) = p_1(x)\cdots p_k(x)$ be a factorization of $p(x)$ into 
monic irreducible polynomials over $\mathfrak{o}_K$, where we assume that $\lambda^{-1}$
is a root of $p_1(x)$.
We claim that $\lambda$ is a root of $p_1(x)$. 
On the contrary, assume that $\lambda$ is not a root of $p_1(x)$. 
The complex roots of $p_1(x)$ occur in inverse pairs. 
The constant term of $p_1(x)$ is the product of the negatives of the roots of $p_1(x)$. 
Hence the constant term of $p_1(x)$ is $- \lambda^{-1}$, and so $\lambda^{-1} \in \o_K$. 
If $K = \Q$ this contradicts $0<\lambda^{-1}<1$. Assume then that $K \neq \Q$.
As $K$ is totally real, there exists a nonidentity embedding $\sigma: K \to \R$. 
Then $\sigma(\lambda^{-1}) \neq \pm 1$ is a real root of $p^\sigma(x)$. But the latter is the
characteristic polynomial of $\gamma^\sigma \in \O(f^\sigma, \R)$. Since this group is
compact, we obtain a contradiction. Thus $p_1(\lambda) = 0$. Since $p_1(x)$ is irreducible
over $\o_K$, it is the minimal polynomial of $\lambda$ over $K$. In particular,
$\deg_K(\lambda) = \deg p_1(x)$. The roots of $p_1(x)$ are conjugate to $\lambda$, and
thus cannot be roots of unity. 

Let $K^* \subseteq \R$ be the normal closure of $K/\Q$. 
Then there exist exactly $d=[K:\Q]$ embeddings $\sigma_1,\ldots, \sigma_d$ of $K$ into $K^*$.  
Consider the monic polynomial 
$p_1^*(x) = p_1^{\sigma_1}(x) \cdots p_1^{\sigma_d}(x).$
The Galois group $\Gal(K^*/\Q)$ acts on the 
embeddings $\left\{ \sigma_1, \dots, \sigma_d \right\}$ by composition on the left, and so
$p_1^*(x)$ is fixed under the action of $\Gal(K^*/\Q)$.  
As $K^*/\Q$ is Galois, we conclude that $p_1^*(x) \in \Z[x]$.

Assume that $\sigma_1$ is the identity embedding. 
Using the fact that $\O(f^{\sigma_i}, \R)$ is compact for $i>1$, 
we see that all roots of $p_1^*(x)$ besides $\lambda$ and $\lambda^{-1}$ are on the unit
circle. Therefore it suffices to show that $p_1^*(x)$ is irreducible over $\Z$ to conclude 
that $\lambda$ is a Salem number. 
To show this, let $g(x) \in \Z[x]$ be the
minimal polynomial of $\lambda$ over $\Q$. Then $g(x)$ divides $p_1^*(x)$ in $\Z[x]$,
so we can write $p_1^\ast(x) = g(x) h(x)$ with $h(x)$ a monic polynomial over $\integers$. 
Let $r$ be a root of $h(x)$. 
Then $r$ is a root of $p_1^{\sigma_j}(x)$ for some $j$. 
Now $p_1^{\sigma_j}(x)$ is the minimal polynomial of $r$ over $K^{\sigma_j}=\sigma_j(K)$. 
Therefore $p_1^{\sigma_j}(x)$ divides $h(x)$ in $K^{\sigma_j}[x]$. 
As $\sigma_j^{-1}$ fixes $h(x)$, we deduce that $p_1(x)$ divides $h(x)$ in $K[x]$. 
Hence $\lambda$ is a root of $h(x)$, 
which is a contradiction, since $\lambda$ is a simple root of $p_1^\ast(x)$. 
Therefore $g(x) = p_1^\ast(x)$. 
Thus $p_1^\ast(x)$ is irreducible over $\integers$.  

For $j > 1$, we define $p_j^*(x) = \prod_{i=1}^d p_j^{\sigma_i}(x) \in \Z[x]$ (same
argument as for $j=1$ above). Each root of $p_j(x)$ lies on the unit circle, and using the
compactness of $\O(f^{\sigma_i}, \R)$ we deduce that the same is true for all the roots of
$p_j^*(x)$. It follows that each root of $p_j^*(x)$ is a root of unity by Kronecker's Theorem \cite{K}. 
Thus the roots of $p_1(x)$ are precisely the roots of $p(x)$ that are not roots of unity. 
Therefore 
$$\mathrm{deg}_K(\lambda) = \mathrm{deg}(p_1(x)) = \mathrm{deg}_\infty(\gamma) \leq n+1.$$

(2)
Let $\deg p_1(x) = 2\ell$.
By Lemma \ref{lem:4}, there exists an irreducible monic polynomial $q(x)$ of degree $\ell$ over $\mathfrak{o}_K$ such that
$p_1(x) = x^\ell q(x+x^{-1})$.
Hence $q(x)$ is the minimal polynomial of $\lambda + \lambda^{-1}$ over $K$. 
Therefore 
$$[K(\lambda+\lambda^{-1}):K] = \mathrm{deg}(q(x)) =  \mathrm{deg}(p_1(x))/2.$$
Likewise we have 
\begin{eqnarray*}
[\rationalnos(\lambda+\lambda^{-1}): \rationalnos] & = & \mathrm{deg}(p_1^\ast(x))/2 \\
									   & = & \mathrm{deg}(p_1(x))[K:\rationalnos]/2 \\
									   & = & [K(\lambda + \lambda^{-1}):K][K:\rationalnos] \ \ 
									    = \ \ [K(\lambda + \lambda^{-1}):\rationalnos].
\end{eqnarray*}
As  $\rationalnos(\lambda+\lambda^{-1})$ is a subfield of $K(\lambda + \lambda^{-1})$, 
we deduce that $\rationalnos(\lambda+\lambda^{-1}) = K(\lambda + \lambda^{-1})$. 
Therefore $K$ is a subfield of $\rationalnos(\lambda+\lambda^{-1})$. 
Moreover,  we have that 
$$[\rationalnos(\lambda+\lambda^{-1}): K] = \mathrm{deg}(p_1(x))/2 = \mathrm{deg}_K(\gamma)/2.$$ 

\vspace{-.22in}
\end{proof}

\section{Salem numbers and translation lengths}  
\label{sec:7}

In this section, we prove the second half of Theorem \ref{thm:1} and provide a sharp example for Corollary \ref{cor:2}
in dimension 2. We begin with a couple of lemmas. 

\begin{lemma}  
\label{lem:14}
If $\lambda$ is a Salem number and $K$ is a subfield of $\rationalnos(\lambda+\lambda^{-1})$, then
$$\deg(\lambda) = \deg_K(\lambda) [K: \rationalnos].$$
\end{lemma}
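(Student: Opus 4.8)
The plan is to place $K$ inside the tower of fields
$$\rationalnos \subseteq K \subseteq \rationalnos(\lambda+\lambda^{-1}) \subseteq \rationalnos(\lambda)$$
and then invoke multiplicativity of degrees in towers of finite field extensions. Since $\lambda$ is a Salem number it is in particular algebraic over $\rationalnos$, so $\rationalnos(\lambda)$ is a finite extension of $\rationalnos$ of degree $\deg(\lambda)$; hence every intermediate field, and in particular $K$, is a number field. The inclusion $\rationalnos(\lambda+\lambda^{-1}) \subseteq \rationalnos(\lambda)$ holds because $\lambda+\lambda^{-1} \in \rationalnos(\lambda)$, so the hypothesis $K \subseteq \rationalnos(\lambda+\lambda^{-1})$ yields $K \subseteq \rationalnos(\lambda)$.

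The key step is to identify $\deg_K(\lambda)$ with $[\rationalnos(\lambda):K]$. Because $\rationalnos \subseteq K \subseteq \rationalnos(\lambda)$ and $\lambda \in \rationalnos(\lambda)$, one has $K(\lambda) = \rationalnos(\lambda)$: the inclusion $K(\lambda) \subseteq \rationalnos(\lambda)$ holds since both $K$ and $\lambda$ lie in $\rationalnos(\lambda)$, and $\rationalnos(\lambda) \subseteq K(\lambda)$ holds since $\rationalnos \subseteq K$ and $\lambda \in K(\lambda)$. By definition $\deg_K(\lambda)$ is the degree of the minimal polynomial of $\lambda$ over $K$, which equals $[K(\lambda):K] = [\rationalnos(\lambda):K]$. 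Applying the tower law to $\rationalnos \subseteq K \subseteq \rationalnos(\lambda)$ then gives $\deg(\lambda) = [\rationalnos(\lambda):\rationalnos] = [\rationalnos(\lambda):K]\,[K:\rationalnos] = \deg_K(\lambda)\,[K:\rationalnos]$, which is the desired identity.

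I do not expect a genuine obstacle here: the statement follows from elementary field theory once the inclusions are organized into a single tower. The only points needing a sentence of justification are the routine facts that $\rationalnos(\lambda+\lambda^{-1}) \subseteq \rationalnos(\lambda)$, that a subfield of the finite extension $\rationalnos(\lambda)/\rationalnos$ is again finite over $\rationalnos$, and that the degree of the minimal polynomial of $\lambda$ over $K$ equals $[K(\lambda):K]$; all of these are standard (cf.\ \cite{Lang}).
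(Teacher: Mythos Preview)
Your proof is correct and follows essentially the same approach as the paper: both establish $K(\lambda)=\rationalnos(\lambda)$ from the inclusions $\rationalnos\subseteq K\subseteq\rationalnos(\lambda+\lambda^{-1})\subseteq\rationalnos(\lambda)$ and then apply the tower law $[\rationalnos(\lambda):\rationalnos]=[\rationalnos(\lambda):K][K:\rationalnos]$.
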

\begin{proof}
As $\rationalnos \subseteq K \subseteq \rationalnos(\lambda+\lambda^{-1})$, we have that 
$$\rationalnos(\lambda) \subseteq K(\lambda) \subseteq \rationalnos(\lambda+\lambda^{-1})(\lambda) = \rationalnos(\lambda),$$
and so $\rationalnos(\lambda) = K(\lambda)$.  Therefore we have that 
$$\deg(\lambda)  =  [\rationalnos(\lambda):\rationalnos]  =  [K(\lambda): K][K:\rationalnos]  =  \deg_K(\lambda) [K: \rationalnos]. $$

\vspace{-.22in}
\end{proof}

\begin{lemma} 
\label{lem:14.5}
Let $\lambda$ be a Salem number, and let $p(x)$ be the minimal polynomial of $\lambda$ 
over a totally real number field $K \subseteq \R$. 
If $\deg \lambda = 2$, assume that $K =\Q$.
Then $p(x)$ is over $\o_K$, the real roots of $p(x)$ are $\lambda$ and $\lambda^{-1}$, 
the complex roots have absolute value equal to 1, 
the degree of $p(x)$ is even, and $p(x)$ is palindromic. 
\end{lemma}
\begin{proof}
This is clear if $\deg \lambda = 2$, so assume $\deg \lambda > 2$. 
Then $\lambda \not\in K$, since every subfield of $K$ is totally real and $\Q(\lambda)$ is not totally real. 
As $p(x)$ divides the Salem polynomial of $\lambda$, 
the roots of $p(x)$ are in $\A$, and so $p(x)$ is over $\A\cap K = \o_K$. 
Moreover the complex roots of $p(x)$ occur in inverse pairs on the unit circle, 
and the real roots are $\lambda$ and possibly $\lambda^{-1}$.  
In fact $\lambda^{-1}$ is a root of $p(x)$,  
since otherwise the constant term of $p(x)$ would be $-\lambda$ 
which is not in $K$. 
Hence the real roots of $p(x)$ are $\lambda$ and $\lambda^{-1}$ and $m=\deg p(x)$ is even. 
The constant term of $p(x)$ is $1$, and so $x^mp(x^{-1})$ is monic. 
Hence $p(x) = x^mp(x^{-1})$, since $x^mp(x^{-1})$ is the minimal polynomial of $\lambda^{-1}$ over $K$. 
Therefore $p(x)$ is palindromic. 
\end{proof}

\begin{theorem} 
\label{thm:6}
Let $\lambda$ be a Salem number, let $K$ be a subfield of $\rationalnos(\lambda+\lambda^{-1})$, 
and let $n \geq 2$ be an integer with $\mathrm{deg}_K(\lambda) \leq n+1$. 
Then there exist a classical arithmetic lattice $\Gamma \subseteq \Isom(H^n)$ of the simplest type over $K$ 
and an orientation preserving hyperbolic element $\gamma \in \Gamma$ such that $\lambda = e^{\ell(\gamma)}$. 
\end{theorem}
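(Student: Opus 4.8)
The plan is to reverse the analysis of Theorem 4 by building an explicit admissible quadratic form $f$ over $K$ together with an explicit matrix $\gamma' \in \mathrm{O}'(f,\mathfrak{o}_K)$ whose characteristic polynomial has $\lambda$ and $\lambda^{-1}$ as its only non-unit-modulus roots and all of whose other eigenvalues lie on the unit circle. Let $p(x)$ be the minimal polynomial of $\lambda$ over $K$; it has degree $m = \deg_K(\lambda) = 2\ell \le n+1$, it is a monic palindromic polynomial over $\mathfrak{o}_K$ (after clearing denominators, using Lemma 4 and Gauss's Lemma 3; note $\lambda$ is an algebraic integer so $\mathfrak{o}_K \cap \rationalnos(\lambda) = \mathfrak{o}_K[\lambda] \cap \ldots$, and $p(x)$ over $\mathfrak{o}_K$ follows from Lemma 1 applied to the companion matrix). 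By Lemma 4 write $p(x) = x^\ell q(x+x^{-1})$ with $q(x)$ monic irreducible of degree $\ell$ over $\mathfrak{o}_K$, having roots $\beta_1 = \lambda + \lambda^{-1} > 2$ and $\beta_2, \ldots, \beta_\ell \in (-2,2)$, and with all conjugate roots under the nonidentity embeddings $\sigma : K \to \realnos$ lying in $(-2,2)$ (this is exactly the content of the proof of Lemma 4, since $K \subseteq \rationalnos(\lambda+\lambda^{-1})$ forces $\lambda + \lambda^{-1}$ and hence $\beta_1$ to have all $\sigma$-conjugates in $(-2,2)$ — here one uses that $\lambda + \lambda^{-1}$ generates a totally real field).

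Next I would take the companion-type matrix of $p(x)$ and conjugate it into $\mathrm{O}(f)$ for a suitable $f$. Concretely, consider the $K$-vector space $V = K[x]/(p(x))$ with the linear map $T$ = multiplication by $x$; since $p(x)$ is palindromic, the bilinear form $\langle v, w\rangle$ defined by pairing $x^i$ with $x^{j}$ via the coefficient extraction that makes $T$ orthogonal (equivalently, the form for which $T$ and $T^{-1}$ are adjoint) is a well-defined nondegenerate symmetric $K$-bilinear form of dimension $2\ell$, and $T \in \mathrm{O}(f_0)$ for $f_0$ this form. The eigenvalues of $T$ are the roots of $p(x)$, namely $\lambda, \lambda^{-1}$ and complex numbers of modulus one, so over $\realnos$ the form $f_0$ has signature $(2\ell - 1, 1)$: the $\lambda^{\pm 1}$ eigenplane contributes a hyperbolic plane (signature $(1,1)$) and each conjugate pair $e^{\pm i\theta}$ contributes a definite plane; one checks the overall sign, adjusting $f_0$ by $\pm 1$ if necessary so the definite part is positive definite. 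Under a nonidentity embedding $\sigma$, the eigenvalues of $T^\sigma$ are the roots of $p^\sigma(x)$, all of modulus one (since $\beta_j^\sigma \in (-2,2)$), so $f_0^\sigma$ is definite, hence (up to the same global sign) positive definite — this makes $f_0$, padded out as below, admissible. If $2\ell < n+1$, extend $f_0$ by an orthogonal sum with $\mathrm{diag}(1,\ldots,1)$ of size $n+1-2\ell$ over $K$ to get a rank-$(n+1)$ admissible form $f$ over $K$, and let $\gamma' = T \oplus I$; then $\gamma'$ preserves $f$, lies in $\mathrm{GL}(n+1,\mathfrak{o}_K)$ after choosing an $\mathfrak{o}_K$-lattice basis (the matrix of $T$ in the basis $1, x, \ldots, x^{2\ell-1}$ is a companion matrix, integral since $p(x)$ is over $\mathfrak{o}_K$), and has the right eigenvalues. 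Replacing $\gamma'$ by $-\gamma'$ if needed puts it in $\mathrm{O}'(f,\mathfrak{o}_K)$, and one more adjustment (multiplying the non-companion block by a rotation, or passing to a power / composing with an element of $\mathrm{SO}$) arranges $\det \gamma' = 1$ so that $\gamma'$ is orientation preserving.

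Finally, transport everything to the standard model: choose $M \in \mathrm{GL}(n+1,\realnos)$ with $f(Mx) = f_n(x)$, set $\Gamma = M^{-1}\mathrm{O}'(f,\mathfrak{o}_K)M$ — a classical arithmetic group of the simplest type over $K$ by the construction recalled at the end of \S 3 — and $\gamma = M^{-1}\gamma' M$. By Proposition 1 of \cite{G} (the same eigenvalue criterion used throughout \S 5), since $\gamma$ has exactly one eigenvalue $> 1$, namely $\lambda$, and one eigenvalue $\lambda^{-1} < 1$ and all others on the unit circle, $\gamma$ is a hyperbolic isometry with $e^{\ell(\gamma)} = \lambda$, and it is orientation preserving by the determinant condition. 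The main obstacle is the sign/signature bookkeeping: one must verify that the natural palindromic form $f_0$ attached to $p(x)$ really has signature $(2\ell-1,1)$ over $\realnos$ and is definite (of the correct sign) under every nonidentity embedding, so that the assembled $f$ is genuinely admissible — this is where the hypothesis $K \subseteq \rationalnos(\lambda+\lambda^{-1})$ (equivalently, the total reality forcing $\beta_j^\sigma \in (-2,2)$) is essential and must be used carefully, rather than merely the fact that $\lambda$ is a Salem number.
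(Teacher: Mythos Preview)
Your outline is essentially the paper's own strategy: take the companion matrix $C$ of the minimal polynomial $p(x)$ of $\lambda$ over $K$, find a $K$-defined quadratic form it preserves, check admissibility, and pad by an identity block to reach dimension $n+1$. The gap is exactly where you flag it. The space of symmetric bilinear forms $A$ over $K$ with $C^tAC=A$ is not one--dimensional: diagonalizing $C$ over $\overline{K}$ shows it has dimension $\ell=\deg_K(\lambda)/2$, with one free parameter for each reciprocal pair of roots. Over $\realnos$ each unit--circle pair contributes a definite $2$--plane whose sign is that free parameter, so generic choices do \emph{not} have signature $(m-1,1)$, and even among those that do, you must simultaneously arrange positive definiteness at every nonidentity real place of $K$. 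Your phrase ``adjusting $f_0$ by $\pm 1$ if necessary'' does not suffice, since a global scalar cannot independently fix the signs on the separate $2$--planes, let alone across all embeddings.

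The paper resolves this by writing the form down explicitly. With $M$ the block--diagonal rotation/boost model of the desired isometry and $W$ the matrix whose columns are $v, Mv, \ldots, M^{m-1}v$ for $v=(1,0,1,0,\ldots)$, one sets $A=W^tJW$; then $C=W^{-1}MW$ is the companion matrix and $f(x)=x^tAx$ automatically has signature $(m-1,1)$. The entries of $A$ are Toeplitz with $a_{j1}=\tfrac12\sum_k r_k^{\,j-1}$, hence lie in $K$ by Newton's identities. Under a nonidentity embedding $\sigma$ the same formulas give $A^\sigma=W_\sigma^t W_\sigma$ (the $J$ becomes $I$ because all roots of $p^\sigma$ lie on the unit circle), which is visibly positive definite. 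Equivalently, this $A$ is (half of) the trace form $\langle a,b\rangle=\mathrm{Tr}_{K(\lambda)/K}(a\cdot\iota(b))$ on $K[x]/(p(x))$ with $\iota(x)=x^{-1}$; had you named that form, your admissibility claim would go through. Two small side remarks: $\det C=1$ automatically since $p$ is palindromic of even degree, so no orientation fix is needed; and replacing $\gamma'$ by $-\gamma'$ would change the eigenvalue $\lambda$ to $-\lambda$ and is not the correct way to land in $\mathrm{O}'(f)$ (in fact $C\in\mathrm{O}'(f,\mathfrak{o}_K)$ already, via $C=W^{-1}MW$).
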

\begin{proof}  Let $p(x)$ be the minimal polynomial of $\lambda$ over $K$. 
Then $\rationalnos(\lambda+\lambda^{-1})$ is totally real by Lemma \ref{lem:5}. 
Hence $K$ is totally real. 
Then $p(x)$ is over $\o_K$, the real roots of $p(x)$ are $\lambda$ and $\lambda^{-1}$, 
the complex roots have absolute value equal to $1$, 
and the degree of $p(x)$ is even by Lemma \ref{lem:14.5}

Let $\deg p(x) = m = 2 \ell$.
Let $r_1, \ldots, r_m$ be the roots of $p(x)$ with $r_{2j-1} = e^{-i\theta_j}$ and $r_{2j} = e^{i\theta_j}$, with $0 < \theta_j < \pi$,  
for $j = 1, \ldots, \ell-1$, and $r_{m-1} = \lambda^{-1}$ and $r_m = \lambda$. 
Let $\eta = \log \lambda$,
and define $M$ to be the block diagonal $m \times m$ matrix with blocks 
$$\left(\begin{array}{cr}  \cos \theta_j &-\sin \theta_j \\
                                     \sin \theta_j & \cos \theta_j
                                     \end{array}\right) 
\ \ \hbox{for}\ \ 1 \leq j < \ell,\ \ \hbox{and} \ \ 
\left(\begin{array}{cc}  \cosh \eta & \sinh \eta \\
                                     \sinh \eta & \cosh \eta
                                     \end{array}\right).$$
Then $M$ is a hyperbolic element of $\mathrm{O}^+(m-1,1)$ with characteristic polynomial $p(x)$, 
since the eigenvalues of $M$ are $e^{\pm i\theta_1},\ldots, e^{\pm i\theta_{\ell-1}}$, and $e^{\pm\eta}$. 
Moreover $\det M = 1$, and so $M$ is an orientation preserving isometry of $H^{m-1}$.

Define a vector $v$ in $\realnos^m$ by 
$$v = (1,0, 1, 0, \ldots, 1, 0).$$
Let $w_j = M^{j-1}v$ for $j = 1,\ldots, m$.
Then $w_j$ is the vector
$$\big(\cos(j-1)\theta_1,\sin(j-1)\theta_1, \ldots, \cos(j-1)\theta_{\ell-1},\sin(j-1)\theta_{\ell-1},\cosh(j-1)\eta,\sinh(j-1)\eta\big).$$
Let $W$ be the $m \times m$ matrix whose $j$th column vector is $w_j$. 
We claim that $W$ is invertible. 

Let $B$ be the block diagonal $m\times m$ matrix with the first $\ell-1$ blocks 
$$\left(\begin{array}{cr}  1 & -i\\
                                         1& i 
                                     \end{array}\right)\ \ \hbox{and last block} \ \ 
                                     \left(\begin{array}{cr}  1& -1\\
                                      				     1& 1
                                     \end{array}\right).$$
Observe that 
$$\left(\begin{array}{cr}  1 & -i\\
                                         1& i 
                                     \end{array}\right) 
 \left(\begin{array}{cc}  \cos(2k-1)\theta_j & \cos 2k\theta_j\\
                                    \sin(2k-1)\theta_j & \sin 2k\theta_j
                                     \end{array}\right)
=\left(\begin{array}{cc}  e^{-(2k-1)i\theta_j}&e^{-2k i\theta_j}\\
                                     e^{(2k-1)i\theta_j}& e^{2k i\theta_j}
                                   \end{array}\right), 
$$ 
$$\left(\begin{array}{cr}  1 & -1\\
                                         1& 1
                                     \end{array}\right) 
 \left(\begin{array}{cc}  \cosh(2k-1)\eta & \cosh 2k\eta\\
                                    \sinh(2k-1)\eta & \sinh 2k\eta
                                     \end{array}\right)
=\left(\begin{array}{cc}  e^{-(2k-1)\eta}&e^{-2k \eta}\\
                                     e^{(2k-1)\eta}& e^{2k \eta}
                                   \end{array}\right). $$                                  
Therefore we have that 
$$BW = \left(\begin{array}{ccccc}  1 & r_1 & r_1^2 & \cdots & r_1^{m-1} \\
						   1 & r_2 & r_2^2 & \cdots & r_2^{m-1} \\
						   \vdots & \vdots &\vdots &  & \vdots \\
						   1 & r_m & r_m^2 & \cdots & r_m^{m-1} 
						   \end{array}\right). $$  
Hence $V = BW$ is a Vandermonde $m \times m$ matrix. 
Therefore we have 
$$\det(V) = \prod_{1\leq j < k \leq m}(r_k-r_j),$$
and so $V$ and $W$ are invertible, since the roots $r_1,\ldots, r_m$ of $p(x)$ are distinct. 

Define an $m\times m$ matrix $C$ by the formula $C = W^{-1}MW$. 
Let $e_1, \ldots, e_m$ be the standard basis vectors of $\realnos^m$. 
Then for $j < m$, we have that 
$$Ce_j = W^{-1}MWe_j = W^{-1}Mw_j = W^{-1}w_{j +1}=e_{j+1}.$$
Therefore $C$ is of the form
$$C = \left(\begin{array}{ccccc}  0 & 0 &  \cdots & 0 & c_1 \\
						 1 & 0 &  \cdots & 0 & c_2 \\
						 0 & 1 &  \cdots & 0 & c_3 \\
						   \vdots & \vdots & & \vdots & \vdots \\
						 0 & 0 &  \cdots & 1 & c_m
						   \end{array}\right). $$  
The matrix $C$ has the same characteristic polynomial as $M$. 
Hence $C$ must be the companion matrix of $p(x)$, and so if 
$$p(x) = a_0 + a_1x +\cdots + a_{m-1}x^{m-1} +x^m,$$
then $c_j = -a_{j-1}$ for $j = 1,\ldots, m$. 
Therefore $C$ is over $\mathfrak{o}_K$. 

Define an $m\times m$ diagonal matrix $J$ by 
$$J = \mathrm{diag}(1,\ldots, 1, -1).$$
Then $J$ is the coefficient matrix of the Lorentzian quadratic form $f_{m-1}(x)$. 
Define a symmetric $m\times m$ matrix $A$ by the formula $A = W^tJW$. 
Then $A$ is the coefficient matrix of a quadratic form $f$ over $\realnos$ in $m$ variables.  
If $x \in \realnos^m$, then 
$$f(x) = x^tAx= x^tW^tJWx = (Wx)^tJWx = f_{m-1}(Wx),$$
and so $f$  has signature $(m-1,1)$ and 
$$\mathrm{O}'(f,\realnos) = W^{-1}\mathrm{O}^+(m-1,1)W.$$
Now $M \in \mathrm{O}^+(m-1,1)$ and $C= W^{-1}MW$. 
Hence $C\in \mathrm{O}'(f,\mathfrak{o}_K)$. 

We claim that $f$ is over $K$.  If $x, y \in \realnos^m$, 
define the {\it Lorentzian inner product} of $x$ and $y$ to be $x\circ y = x^tJy$. 
Let $A = (a_{jk})$. 
Then we have that $a_{jk} = w_j\circ w_k$. 
The matrix $M$ preserves the Lorentzian inner product. 
Hence if $j, k < m$, we have that 
$$a_{j+1, k+1} = w_{j+1} \circ w_{k+1} = Mw_j \circ Mw_k = w_j \circ w_k = a_{jk}.$$
Therefore $A$ is a Toeplitz matrix (diagonal-constant matrix). 
As $A$ is symmetric, to determine $A$ it suffices to determine the first column of $A$, that is, 
to determine $a_{j1}$ for $j =1, \ldots, m$. 
We have that 
$$a_{j1} = w_j\circ v = \cos(j-1)\theta_1+\cdots +\cos(j-1)\theta_{\ell-1} + \cosh(j-1)\eta.$$
For $j = 1$, we see that all the elements on the main diagonal of $A$ are equal to $\ell$. 
Now we have 
\begin{eqnarray*}
r_{2k-1}^{j-1} & = & \cos(j-1)\theta_k-i\sin(j-1)\theta_k \\
r_{2k}^{j-1}    & = & \cos(j-1)\theta_k+i\sin(j-1)\theta_k, 
\end{eqnarray*}
and so 
$$\cos(j-1)\theta_k  = (r_{2k-1}^{j-1} +r_{2k}^{j-1} )/2.$$
Likewise we have
\begin{eqnarray*}
r_{m-1}^{j-1} & = & \cosh(j-1)\eta- \sinh(j-1)\eta \\
r_{m}^{j-1}    & = & \cosh(j-1)\eta+ \sinh(j-1)\eta,
\end{eqnarray*}
and so 
$$\cosh(j-1)\eta  = (r_{m-1}^{j-1} +r_{m}^{j-1} )/2.$$
Therefore we have that 
$$a_{j1} = (r_1^{j-1} + \cdots + r_m^{j-1})/2.$$
Now $r_1^{j-1} + \cdots + r_m^{j-1}$ is equal to the symmetric polynomial $x_1^{j-1}+ \cdots + x_m^{j-1}$ 
evaluated at the roots $r_1, \ldots, r_m$ of $p(x)$. 
Let $s_k(x_1,\ldots, x_m)$ be the $k$th elementary symmetric polynomial in $m$ variables, 
and let 
$$t_k = s_k(r_1,\ldots, r_m).$$
Then we have that 
$$p(x) = x^m -t_1x^{m-1} + \cdots + (-1)^mt_m,$$
and so $t_k \in \mathfrak{o}_K$ for each $k = 1, \ldots, m$. 
By Newton's identities there is a polynomial $g_j(x_1,\ldots, x_m)$ over $\integers$ 
such that 
$$x_1^{j}+ \cdots + x_m^{j} = g_j(s_1(x_1,\ldots, x_m), \ldots, s_m(x_1,\ldots, x_m)).$$
Hence we have 
$$r_1^{j-1} + \cdots + r_m^{j-1} = g_{j-1}(t_1,\ldots, t_m).$$
Therefore $r_1^{j-1} + \cdots + r_m^{j-1} \in \mathfrak{o}_K$. 
Hence $2A$ is over $\mathfrak{o}_K$, and so $A$ is over $K$. 
Therefore the quadratic form $f$ is over $K$.  
In fact $f$ has collected coefficients in $\mathfrak{o}_K$, since $a_{jj} = \ell$ 
and if $j\neq k$, then $a_{jk} + a_{kj} \in \mathfrak{o}_K$. 

We next show that $f$ is admissible.  This is clear if $K = \rationalnos$, 
and so assume $K \neq \rationalnos$. 
Let $d = [K:\rationalnos]$, and let $\sigma_1, \ldots,\sigma_d$ be the embeddings of $K$ into $\realnos$ with $\sigma_1$ the inclusion of $K$ into $\realnos$. 
Define the monic polynomial $p^\ast(x) = p^{\sigma_1}(x)\cdots p^{\sigma_d}(x)$.  
Then $p(x) \in \Q[x]$ and $s(x)$ divides $p^*(x)$ in $\Q[x]$, since $\lambda$ is a root of $p^*(x)$. 
By Lemma \ref{lem:14}, we have 
$$\deg(p^\ast(x)) = \deg(p(x))d = \deg_K(\lambda)[K:\rationalnos] = \deg(\lambda) = \deg(s(x)),$$
and so $s(x) = p^*(x)$.

Assume that $j > 1$. 
Then  the roots of $p^{\sigma_j}(x)$ are simple complex roots that occur in complex conjugate pairs of the form $e^{\pm i\theta}$ 
for some real number $\theta$. 
Define an $m\times m$ block diagonal matrix $M_j$ in terms of the roots of $p^{\sigma_j}(x)$ in the same way that we defined $M$ . 
Then $M_j$ is a rotation matrix. 
Define an $m\times m$ matrix $W_j$ in terms of $M_j$ and $v$ in the same way that we defined $W$.  
Then $W_j$ is invertible by the same Vandermonde determinant argument. 
Define an $m \times m$ symmetric matrix $A_j$ by the formula $A_j = W_j^tW_j$. 
Then the quadratic form $f_j$, whose coefficient matrix is $A_j$, is positive definite. 
The entries of $A_j$ are expressed in terms of the coefficients of $p^{\sigma_j}(x)$ 
in the same way that the entries of $A$ are expressed in terms of the coefficients of $p(x)$.
Hence $A_j = A^{\sigma_j}$ and so $f_j = f^{\sigma_j}$.
Therefore $f^{\sigma_j}$ is positive definite. 
Thus $f$ is admissible. 

Let $\Gamma = W\mathrm{O}'(f,\mathfrak{o}_K)W^{-1}$. 
Then $\Gamma$ is a classical arithmetic group of isometries of $H^{m-1}$ 
of the simplest type over $K$. 
We have that $\gamma = M = WCW^{-1} $ is an orientation preserving hyperbolic element of $\Gamma$  
such that $\lambda = e^{\ell(\gamma)}$. 
If $m = n+1$ we are done, 
otherwise we boost $f$ to the quadratic form
$$x_1^2+\cdots + x^2_{n-m+1} + f(x_{n-m+2}, \ldots, x_{n+1}).$$

\vspace{-.23in}
\end{proof}

The next corollary is an enhanced version of Corollary \ref{cor:1}. 

\begin{corollary}  
\label{cor:5}
Let $\lambda$ be a Salem number. 
Then for each integer $n \geq 2$, there exist a classical arithmetic lattice $\Gamma \subseteq \Isom(H^n)$
of the simplest type defined over $\rationalnos(\lambda + \lambda^{-1})$ and a hyperbolic translation 
$\gamma$ in $\Gamma$ such that $\lambda = e^{\ell(\gamma)}$. 
\end{corollary}
\begin{proof} 
Let $K=\rationalnos(\lambda + \lambda^{-1})$.  Then $\deg_K(\lambda) = 2$ by Theorem \ref{thm:4}(2).  
Hence $\deg_K(\lambda) \leq n+1$ for $n \geq 1$. 
Therefore $\lambda = e^{\ell(\gamma)}$ for some $\gamma \in \Gamma$ given by Theorem \ref{thm:6}. 
Moreover $\gamma$ is a hyperbolic translation by the proof of Theorem \ref{thm:6}.  
\end{proof}

The next corollary follows from Lemma \ref{lem:6} and Theorems \ref{thm:4} and \ref{thm:6}. 

\begin{corollary} 
\label{cor:6}
Let $\Gamma \subseteq {\rm Isom}(H^n)$ be an arithmetic lattice of the simplest type defined over $\rationalnos$, with $n$ even, 
and let $C$ be a closed geodesic in $H^n/\Gamma$.   
Then $\mathrm{length}(C) \geq b_n$,  and this lower bound is sharp for each even integer $n \geq 2$. 
\end{corollary}

Corollary \ref{cor:2} follows from Corollary \ref{cor:6} once we have a sharp non-cocompact example for $n =2$, 
since all arithmetic lattices of $\Isom(H^n)$ of the simplest type over $\rationalnos$ 
are not cocompact when $n > 3$.  
For $\deg\lambda = 2$, we have $\lambda + \lambda^{-1} \in \Z_{>2}$.
Therefore the smallest Salem number $\lambda_{2,1}$ of degree 2 occurs when $\lambda+\lambda^{-1} =3$, 
and so
$\lambda_{2,1} = \big(3+\sqrt{5}\big)/2$,  
and we have that 
$$b_2 = \log(\lambda_{2,1}) = .9624236501\ldots\ .$$
When $n = 2$ and $\lambda = \lambda_{2,1}$, the proof of Theorem \ref{thm:6} yields the quadratic form 
$$f(x) = x_1^2 + x_2^2 + 3x_2x_3 + x_3^2.$$
Now $f(1,-1,2) = 0$, and so
a corresponding arithmetic group $\Gamma$ of isometries of $H^2$ is not cocompact. 
Thus we have a sharp example for Corollary \ref{cor:2} when $n = 2$.


\section{Square-rootable Salem numbers}  
\label{sec:8}

In this section, we prove Theorem \ref{thm:2}.  
The first half of Theorem \ref{thm:2} is Theorem \ref{thm:7}, and the second half is
Theorem \ref{thm:8} below. We start with a few lemmas. The proof of the following, which is easy,
can be found in \cite[Lemma 2]{Smyth}. 
 
\begin{lemma}  
\label{lem:12}
If $\lambda$ is a Salem number of degree $d$, then $\lambda^k$ is a Salem number of degree $d$ for 
each positive integer $k$. 
\end{lemma}

It is clear that $\deg \lambda^{\frac{1}{2}}$ is either $\deg \lambda$ or $2 \deg
\lambda$. These two cases are distinguished by the following result.

\begin{lemma} 
\label{lem:15}
Let $\lambda$ be a Salem number. Then  $\deg \lambda^{\frac{1}{2}} = \deg \lambda$ if and only if either
$\deg \lambda^{\frac{1}{2}} = 2$ or $\lambda^{\frac{1}{2}}$ is a Salem number. 
\end{lemma}
\begin{proof}
Suppose $\deg \lambda^{\frac{1}{2}} = \deg \lambda$.  Let $s(x) \in \Z[x]$ be the Salem
polynomial of $\lambda$. 
The roots of $s(x^2)$ are of the form $\pm r^{\frac{1}{2}}$ where $r$ is a root of $s(x)$. 
Let $p(x) \in \Z[x]$ be the minimal polynomial of $\lambda^{\frac{1}{2}}$ over $\Q$.  
We can write $s(x^2) = p(x) q(x)$ for some $q(x) \in \Z[x]$. 
None of the roots of $q(x)$ are roots of unity.  Hence $q(x)$ must have a real root
by Kronecker's theorem \cite{K}. 
As $\deg q(x) = \deg p(x) = \deg s(x)$, we have that $\deg q(x)$ is even. 
Therefore $q(x)$ has two real roots and $p(x)$ has two real roots.
If $\deg s(x) = 2$, then $\deg p(x) =  2$, and so $\deg  \lambda^{\frac{1}{2}} = 2$.
Suppose $\deg s(x) \geq 4$. 
Then $p(x)$ has a pair of reciprocal complex roots, whence 
all the roots of $p(x)$ occur in reciprocal pairs.  
Therefore $\lambda^\frac{1}{2}$ is a Salem number with Salem polynomial $p(x)$. 

Conversely, if $\deg \lambda^{\frac{1}{2}} = 2$, then $\deg \lambda = 2 = \deg \lambda^{\frac{1}{2}}$. 
If $\lambda^{\frac{1}{2}}$ is a Salem number, then $\deg \lambda = \deg \lambda^{\frac{1}{2}}$ 
by Lemma \ref{lem:12}. 
\end{proof}

\begin{lemma}  
\label{lem:16}
If $\lambda$ be a Salem number and  $K$ is a subfield of $\rationalnos(\lambda+\lambda^{-1})$, then
$$\deg(\lambda^{\frac{1}{2}}) = \deg_K(\lambda^{\frac{1}{2}})[K:\rationalnos].$$
\end{lemma}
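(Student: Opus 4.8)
The plan is to imitate the proof of Lemma 14 (the analogous identity $\deg(\lambda) = \deg_K(\lambda)[K:\rationalnos]$ for $\lambda$ itself), with $\lambda^{\frac{1}{2}}$ in place of $\lambda$. Everything reduces to showing that $K(\lambda^{\frac{1}{2}}) = \rationalnos(\lambda^{\frac{1}{2}})$, after which the identity is just multiplicativity of degrees in a tower of fields.

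First I would record the chain of inclusions
$$\rationalnos \subseteq K \subseteq \rationalnos(\lambda+\lambda^{-1}) \subseteq \rationalnos(\lambda) \subseteq \rationalnos(\lambda^{\frac{1}{2}}).$$
Here the inclusion $\rationalnos(\lambda+\lambda^{-1}) \subseteq \rationalnos(\lambda)$ is clear, and $\rationalnos(\lambda) \subseteq \rationalnos(\lambda^{\frac{1}{2}})$ holds because $\lambda = (\lambda^{\frac{1}{2}})^2$. In particular $K$ is a subfield of $\rationalnos(\lambda^{\frac{1}{2}})$, so $K(\lambda^{\frac{1}{2}})$ makes sense, and since $\lambda+\lambda^{-1} = (\lambda^{\frac{1}{2}})^2 + (\lambda^{\frac{1}{2}})^{-2} \in \rationalnos(\lambda^{\frac{1}{2}})$ we get
$$\rationalnos(\lambda^{\frac{1}{2}}) \subseteq K(\lambda^{\frac{1}{2}}) \subseteq \rationalnos(\lambda+\lambda^{-1})(\lambda^{\frac{1}{2}}) \subseteq \rationalnos(\lambda^{\frac{1}{2}}),$$
hence $K(\lambda^{\frac{1}{2}}) = \rationalnos(\lambda^{\frac{1}{2}})$.

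Then I would conclude by applying the tower law to $\rationalnos \subseteq K \subseteq \rationalnos(\lambda^{\frac{1}{2}})$:
$$\deg(\lambda^{\frac{1}{2}}) = [\rationalnos(\lambda^{\frac{1}{2}}):\rationalnos] = [\rationalnos(\lambda^{\frac{1}{2}}):K][K:\rationalnos] = [K(\lambda^{\frac{1}{2}}):K][K:\rationalnos] = \deg_K(\lambda^{\frac{1}{2}})[K:\rationalnos].$$

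There is no real obstacle here: the argument is purely formal once one observes that $\lambda+\lambda^{-1}$, and therefore $K$, lies in $\rationalnos(\lambda^{\frac{1}{2}})$. The only point requiring a moment's care is that the symbols $[\rationalnos(\lambda^{\frac{1}{2}}):K]$ and $\deg_K(\lambda^{\frac{1}{2}})$ are meaningful, which is precisely what the inclusion $K \subseteq \rationalnos(\lambda^{\frac{1}{2}})$ guarantees; in particular no use is made of Lemma 15 or of whether $\lambda^{\frac{1}{2}}$ happens to be a Salem number.
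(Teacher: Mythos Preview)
Your proof is correct and follows essentially the same route as the paper's: both reduce to showing $K(\lambda^{\frac{1}{2}}) = \rationalnos(\lambda^{\frac{1}{2}})$ via a chain of inclusions ending in $\rationalnos(\lambda^{\frac{1}{2}})$, then apply the tower law. The only cosmetic difference is that the paper passes through $\rationalnos(\lambda^{\frac{1}{2}}+\lambda^{-\frac{1}{2}})$ (using $(\lambda^{\frac{1}{2}}+\lambda^{-\frac{1}{2}})^2 = \lambda+2+\lambda^{-1}$) while you pass through $\rationalnos(\lambda)$; both work.
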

\begin{proof}
As $(\lambda^{\frac{1}{2}}+\lambda^{-\frac{1}{2}})^2 = \lambda +2 + \lambda^{-1}$, we have that 
$$\rationalnos \subseteq K \subseteq \rationalnos(\lambda +\lambda^{-1}) \subseteq 
\rationalnos(\lambda^{\frac{1}{2}}+\lambda^{-{\frac{1}{2}}}).$$
Hence we have that 
$$\rationalnos(\lambda^{\frac{1}{2}}) \subseteq K(\lambda^{\frac{1}{2}}) 
\subseteq \rationalnos(\lambda^{\frac{1}{2}}+\lambda^{-{\frac{1}{2}}})(\lambda^{\frac{1}{2}}) = \rationalnos(\lambda^{\frac{1}{2}}).$$
Thus $\rationalnos(\lambda^{\frac{1}{2}}) = K(\lambda^{\frac{1}{2}})$,
which enables us to deduce exactly as in Lemma \ref{lem:14}.
\end{proof}

Let $\lambda$ be a Salem number, let $K$ be a subfield of $\rationalnos(\lambda +\lambda^{-1})$, 
and let $p(x)$ be the minimal polynomial of $\lambda$ over $K$.  
We say that $\lambda$ is {\it square-rootable over} $K$ if there exist a totally positive element $\alpha$ of $K$ 
and a monic palindromic polynomial $q(x)$, whose even degree coefficients are in $K$ and whose odd degree coefficients 
are in $\sqrt{\alpha}K$, such that $q(x)q(-x) = p(x^2)$.  
We also say that $\lambda$ is {\it square-rootable over} $K$ {\it via} $\alpha$.

\begin{lemma}  
\label{lem:17}
Let $\lambda$ be a Salem number and let $K$ be a subfield of $\rationalnos(\lambda+\lambda^{-1})$. 
Then $\lambda$ is square-rootable over $K$ via a square in $K$ if and only if $\lambda^{\frac{1}{2}}$ 
is a Salem number. 
\end{lemma}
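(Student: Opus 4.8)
First I would unwind the definition. If $\alpha = \beta^2$ is a square in $K$, then $\sqrt{\alpha}\,K = K$, so ``$\lambda$ is square-rootable over $K$ via a square in $K$'' says exactly that there is a monic palindromic polynomial $q(x)$ over $K$ with $q(x)q(-x) = p(x^2)$, where $p(x)$ is the minimal polynomial of $\lambda$ over $K$. Put $m = \deg p$. As in the proof of Theorem 6, since $K \subseteq \rationalnos(\lambda+\lambda^{-1})$ the real roots of $p$ are $\lambda^{\pm 1}$ and the remaining roots are simple complex numbers of modulus $1$ in conjugate pairs, so $m$ is even; since the roots of $p$ are closed under inversion, $p$ is palindromic, hence $p(x^2)$ is a palindromic \emph{and} even polynomial of degree $2m$ in $x$ with $2m$ distinct roots. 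Throughout write $p_{1/2}(x)$ for the minimal polynomial of $\lambda^{\frac{1}{2}}$ over $K$; as $\lambda^{\frac{1}{2}}$ is a root of $s(x^2)$ with $s$ the Salem polynomial of $\lambda$, the roots of $p_{1/2}$ are algebraic integers, $p_{1/2}$ is over $\mathfrak{o}_K$, and $p_{1/2}\mid p(x^2)$. Also $\deg_K(\lambda^{\frac{1}{2}}) = [K(\lambda^{\frac{1}{2}}):K] \geq [K(\lambda):K] = \deg_K(\lambda) = m$, because $\lambda = (\lambda^{\frac{1}{2}})^2 \in K(\lambda^{\frac{1}{2}})$.

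For the implication $(\Leftarrow)$, assume $\lambda^{\frac{1}{2}}$ is a Salem number. By Lemma 12(1), $\deg\lambda^{\frac{1}{2}} = \deg\lambda$, so Lemmas 14 and 17 give $\deg_K(\lambda^{\frac{1}{2}}) = \deg_K(\lambda) = m$, i.e.\ $\deg p_{1/2} = m$. I claim $q := p_{1/2}$ works. The polynomial $p_{1/2}$ is not even, for otherwise $p_{1/2}(x) = h(x^2)$ with $h(\lambda) = 0$ and $\deg h = m/2 < \deg_K\lambda$. Next, $p_{1/2}$ is palindromic: since $\lambda^{\frac{1}{2}}$ is a Salem number, Lemma 4 gives $[\rationalnos(\lambda^{\frac{1}{2}}):\rationalnos(\lambda^{\frac{1}{2}}+\lambda^{-\frac{1}{2}})] = 2$, so $x^2 - (\lambda^{\frac{1}{2}}+\lambda^{-\frac{1}{2}})x + 1$ is the minimal polynomial of $\lambda^{\frac{1}{2}}$ over $\rationalnos(\lambda^{\frac{1}{2}}+\lambda^{-\frac{1}{2}})$, whence $\lambda^{\frac{1}{2}}\mapsto \lambda^{-\frac{1}{2}}$ is an automorphism of $\rationalnos(\lambda^{\frac{1}{2}})$ over that field, hence over $K \subseteq \rationalnos(\lambda^{\frac{1}{2}}+\lambda^{-\frac{1}{2}})$; thus $\lambda^{-\frac{1}{2}}$ is a root of $p_{1/2}$. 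Comparing $p_{1/2}$ with its reversal $\widetilde{p}_{1/2}(x) = x^m p_{1/2}(1/x)$, which vanishes at $\lambda^{-\frac{1}{2}}$ and is $p_{1/2}(0)$ times a monic polynomial, one gets $\widetilde{p}_{1/2} = p_{1/2}(0)\,p_{1/2}$ and $p_{1/2}(0)^2 = 1$; the case $p_{1/2}(0) = -1$ makes $p_{1/2}$ anti-palindromic, forcing $p_{1/2}(1) = 0$ and contradicting irreducibility over $K$ (as $m \geq 2$), so $p_{1/2}(0) = 1$ and $p_{1/2}$ is palindromic. Finally, writing $p(x^2) = p_{1/2}(x)g(x)$ with $g$ monic of degree $m$ over $K$, evenness of $p(x^2)$ yields $p_{1/2}(-x)g(-x) = p_{1/2}(x)g(x)$; since $p_{1/2}(x)$ and $p_{1/2}(-x)$ are non-associate monic irreducibles (distinct because $p_{1/2}$ is not even), $p_{1/2}(x)\mid g(-x)$, so $g(x) = p_{1/2}(-x)$ and $p(x^2) = p_{1/2}(x)p_{1/2}(-x)$. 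Hence $\lambda$ is square-rootable over $K$ via $\alpha = 1$.

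For the implication $(\Rightarrow)$, assume such a monic palindromic $q$ of degree $m$ over $K$ exists. Since $p_{1/2}$ is irreducible over $K$ and divides $q(x)q(-x)$, it divides $q(x)$ or $q(-x)$; as $\deg p_{1/2} \geq m$ and $q(x), q(-x)$ are monic of degree $m$ ($m$ even), we get $p_{1/2} = q(x)$ or $p_{1/2} = q(-x)$, so $\deg_K(\lambda^{\frac{1}{2}}) = m = \deg_K(\lambda)$ and, by Lemmas 14 and 17, $\deg\lambda^{\frac{1}{2}} = \deg\lambda$. Moreover, since $m$ is even and $q$ is palindromic, so is $q(-x)$, hence $p_{1/2}$ is palindromic. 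By Lemma 16, either $\lambda^{\frac{1}{2}}$ is a Salem number, or $\deg\lambda^{\frac{1}{2}} = 2$ and the norm of $\lambda^{\frac{1}{2}}$ is $-1$. In the latter case $\deg\lambda = \deg\lambda^{\frac{1}{2}} = 2$, so $\rationalnos(\lambda+\lambda^{-1}) = \rationalnos$, forcing $K = \rationalnos$ and $m = 2$; then $p_{1/2}(x) = x^2 + bx + 1$ is monic palindromic over $\rationalnos$, so the norm of $\lambda^{\frac{1}{2}}$, the product of its $\rationalnos$-conjugates, is $p_{1/2}(0) = 1 \neq -1$, a contradiction. Hence $\lambda^{\frac{1}{2}}$ is a Salem number.

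I expect the main obstacle to be the palindromy of $p_{1/2}$ in the direction $(\Leftarrow)$: showing that $\lambda^{-\frac{1}{2}}$ is a $K$-conjugate of $\lambda^{\frac{1}{2}}$ (not merely a $\rationalnos$-conjugate) is exactly where the hypothesis $K \subseteq \rationalnos(\lambda+\lambda^{-1})$ is used, via the chain $K \subseteq \rationalnos(\lambda+\lambda^{-1}) \subseteq \rationalnos(\lambda^{\frac{1}{2}}+\lambda^{-\frac{1}{2}})$ and Lemma 4. A secondary point requiring care is the degree-$2$ edge case, which I would not try to absorb into a uniform argument but instead dispatch via Lemma 16, keeping in mind that the excluded possibility ``norm $=-1$'' is precisely $p_{1/2}(0) = -1$, which palindromy rules out.
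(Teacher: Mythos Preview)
Your proof is correct and follows essentially the same strategy as the paper's, with only a minor stylistic difference in how you establish palindromy of $p_{1/2}$ in the $(\Leftarrow)$ direction (you use the Galois automorphism $\lambda^{1/2}\mapsto\lambda^{-1/2}$ over $\rationalnos(\lambda^{1/2}+\lambda^{-1/2})\supseteq K$; the paper instead notes that the complex roots of $p_{1/2}$, being roots of the Salem polynomial of $\lambda^{1/2}$, already come in inverse pairs, and that if $\lambda^{-1/2}$ were not a root the constant term would be $-\lambda^{1/2}\notin K$). Two lemma-numbering slips to fix: your citations of ``Lemmas 14 and 17'' should be Lemmas 14 and 16 (Lemma 17 is the statement being proved), and ``By Lemma 16'' in the $(\Rightarrow)$ direction should read ``By Lemma 15''.
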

\begin{proof}
Let $p(x)$ be the minimal polynomial of $\lambda$ over $K$. 
First assume that $\lambda^{\frac{1}{2}}$ is a Salem number. 
Let $q(x)$ be the minimal polynomials of $\lambda^{\frac{1}{2}}$ over $K$.  
We have that $\deg \lambda^{\frac{1}{2}} = \deg\lambda$ by Lemma \ref{lem:15}, 
and so $\deg_K(\lambda^{\frac{1}{2}}) = \deg_K(\lambda)$ by Lemmas \ref{lem:14} and \ref{lem:16}.   
Hence $\deg q(x) = \deg p(x)$. 
Now the real roots of $q(x)$ are $\lambda^{\frac{1}{2}}$ and $\lambda^{-\frac{1}{2}}$, 
the degree of $q(x)$ is even, and $q(x)$ is palindromic by Lemma \ref{lem:14.5}. 

As the real roots of $q(x)$ are positive, $q(x) \neq q(-x)$. 
As $\deg q(x)$ is even, $q(-x)$ is monic, and so $q(-x)$ 
is the minimal polynomial of $-\lambda^{\frac{1}{2}}$. 
We conclude that $p(x^2) = q(x) q(-x)$, 
and so $\lambda$ is square-rootable over $K$ via the square $1$. 

Conversely, assume that $\lambda$ is square-rootable over $K$ via a square in $K$. 
Then there exists a monic palindromic polynomial $q(x)$ over $K$ such that $q(x)q(-x) = p(x^2)$. 
By replacing $q(x)$ with $q(-x)$, if necessary, we may assume that $\lambda^{\frac{1}{2}}$ is a root of $q(x)$. 
Hence the minimal polynomial of $\lambda^{\frac{1}{2}}$ over $K$ divides $q(x)$. 
Therefore 
$$\deg_K(\lambda^{\frac{1}{2}})\leq \deg q(x) = \deg p(x) = \deg_K(\lambda).$$
Hence $\deg \lambda^{\frac{1}{2}} \leq \deg \lambda$ by Lemmas \ref{lem:14} and \ref{lem:16}. 
Therefore $\deg \lambda^{\frac{1}{2}} = \deg \lambda$, and either   
$\lambda^{\frac{1}{2}}$ is a Salem number or $\deg \lambda^{\frac{1}{2}} = 2$ by Lemma \ref{lem:15}. 

Assume that $\deg\lambda^{\frac{1}{2}} = 2$.  Then $\deg \lambda = 2$. 
Hence $q(x)$ is the minimal polynomial of $\lambda^{\frac{1}{2}}$ over $K = \rationalnos$. 
As the constant term of $q(x)$ is $1$, the other root of $q(x)$ is $\lambda^{-\frac{1}{2}}$. 
Therefore $\lambda^{\frac{1}{2}}$ is a Salem number with Salem polynomial $q(x)$. 
\end{proof}

\begin{lemma}  
\label{lem:18}
Let $\lambda$ be a Salem number, and let $K$ be a subfield of  $\Q(\lambda + \lambda^{-1})$. 
If $\deg_K(\lambda) = 2$, then $\lambda$ is square-rootable over $\Q(\lambda + \lambda^{-1}) = K$ via $\alpha =  \lambda + \lambda^{-1}+2$. 
\end{lemma}
\begin{proof}
By Lemma \ref{lem:14.5}, the minimal polynomial of $\lambda$ over $K$ is 
$$p(x) = (x-\lambda)(x-\lambda^{-1}) = x^2-(\lambda+\lambda^{-1})x +1.$$
Hence $\lambda + \lambda^{-1} \in K$, and so $K = \Q(\lambda + \lambda^{-1})$. 
Let
$$q(x) = (x-\lambda^{\frac{1}{2}})(x-\lambda^{-\frac{1}{2}}) = x^2-(\lambda^{\frac{1}{2}}+\lambda^{-\frac{1}{2}})x+1.$$
Then $q(x)q(-x) = p(x^2)$ and $q(x) = x^2-\sqrt{\alpha}\,x+1.$
Now $\alpha$ is totally positive, 
since if $\sigma: K \to \realnos$ is a nonidentity embedding, 
then $\sigma(\lambda + \lambda^{-1}) = 2\cos\theta$ for some $\theta \in \R$ with $0 < \theta < \pi$ by 
Lemma \ref{lem:4}. 
Hence $\lambda$ is square-rootable over $K$ via $\alpha$. 
\end{proof}

\begin{theorem} 
\label{thm:7}
Let $\Gamma \subseteq \Isom(H^n)$ be an arithmetic lattice, with $n$ odd and $n \geq 3$,  
of the simplest type defined over a totally real number field $K$.    
Let $\gamma$ be a hyperbolic element of $\Gamma$, and let $\lambda = e^{2\ell(\gamma)}$.  
Then $\lambda$ is a Salem number which is square-rootable over $K$.
\end{theorem}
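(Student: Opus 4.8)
The plan is to pass from $\gamma$ to its square $\delta=\gamma^2$, which lies in the already classical group $\Gamma^{(2)}$, and then to use the linear algebraic group lemmas of \S 4 to manufacture the palindromic polynomial witnessing square-rootability. Note first that $n\geq 3$, since $n$ is odd and $n>1$. By Lemma 10, $\Gamma^{(2)}$ is a classical arithmetic group of isometries of $H^n$ of the simplest type defined over $K$; it contains $\delta=\gamma^2$, which is hyperbolic with $\ell(\delta)=2\ell(\gamma)$, so $\lambda=e^{\ell(\delta)}$. Applying Theorem 4 to $\Gamma^{(2)}$ and $\delta$ shows that $\lambda$ is a Salem number with $K\subseteq\rationalnos(\lambda+\lambda^{-1})$, and that the minimal polynomial $p(x)$ of $\lambda$ over $K$ is the factor of $\mathrm{char}(\delta)$ carrying exactly the non-root-of-unity eigenvalues of $\delta$; in particular $m:=\deg p=\mathrm{deg}_\infty(\delta)$ is even, $p$ occurs with multiplicity one in $\mathrm{char}(\delta)$, and its unit-modulus roots are not roots of unity. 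It remains to produce a totally positive $\alpha\in K$ and a monic palindromic $q(x)$, with even-degree coefficients in $K$ and odd-degree coefficients in $\sqrt{\alpha}\,K$, such that $q(x)q(-x)=p(x^2)$.

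Next I would fix an admissible form $f$ over $K$ and $M\in\mathrm{GL}(n+1,\realnos)$ realizing $\Gamma$, chosen as in the proof of Lemma 10 so that $\Gamma'=M\Gamma M^{-1}$ is commensurable to $\mathrm{O}'(f,\mathfrak{o}_K)$ in $\mathrm{O}'(f,\realnos)$ and $M\Gamma^{(2)}M^{-1}\subseteq\mathrm{O}'(f,K)$; put $\gamma'=M\gamma M^{-1}\in\mathrm{O}'(f,\realnos)$ and $\delta'=(\gamma')^2=M\delta M^{-1}\in\mathrm{O}'(f,K)$. By Lemma 9 the image of $\gamma'$ in $\mathrm{PO}(f,\realnos)$ lies in $\mathrm{PO}(f)_K$, so by Lemma 7 there is $B\in\mathrm{GO}(f,K)$ with $b=\mu(B)$ and $\gamma'=\pm\tfrac{1}{\sqrt{b}}B$; since $f$ is admissible and $n\geq 3$, $b$ is totally positive. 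Writing $B=c\gamma'$ with $c\in\realnos$ and $c^2=b$, we get $B^2=b\delta'$, so $B$ commutes with $\delta'$ and hence preserves the $K$-rational subspace $V=\ker p(\delta')$, which has dimension $m$ (as $\delta'$ is diagonalizable over $\complexnos$ and $p$ occurs once in $\mathrm{char}(\delta')$). Let $r(x)\in K[x]$ be the characteristic polynomial of $B|_V$ in a $K$-basis of $V$.

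Then I would identify $r$ and rescale it. Since $\gamma'$ is hyperbolic it acts as $\mu:=e^{\ell(\gamma)}$, resp.\ $\mu^{-1}$, on the $\delta'$-eigenline for $\lambda$, resp.\ $\lambda^{-1}$, and as a unit-modulus scalar $\zeta$ on each $\delta'$-eigenline attached to a unit-modulus root of $p$ (with conjugate eigenline carrying $\overline{\zeta}=\zeta^{-1}$); hence the eigenvalues of $B|_V$ fall into pairs $\{c\mu,c\mu^{-1}\}$ and $\{c\zeta,c\overline{\zeta}\}$ of product $c^2=b$, so $r(x)=\prod_{i=1}^{m/2}(x^2-c\tau_i x+b)$ with each $\tau_i\in\realnos$. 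Put $q(x)=c^{-m}r(cx)$. This is monic of degree $m$; its roots are the roots of $r$ divided by $c$, namely $\mu,\mu^{-1}$ together with the $\zeta,\overline{\zeta}$, a set closed under $x\mapsto x^{-1}$ with product $1$, so $q$ is palindromic, and these roots together with their negatives are exactly the roots of $p(x^2)$, so $q(x)q(-x)=p(x^2)$. Finally, writing $r(x)=\sum_k\rho_k x^{m-k}$ with $\rho_k\in K$ gives $q(x)=\sum_k\rho_k c^{-k}x^{m-k}$, and since $c^2=b\in K^\ast$ one has $c^{-k}\in K$ for $k$ even and $c^{-k}\in\sqrt{b}\,K$ for $k$ odd, with $m-k$ of the same parity as $k$; hence the even-degree coefficients of $q$ lie in $K$ and the odd-degree ones in $\sqrt{b}\,K$. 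As $b$ is totally positive, $\lambda$ is square-rootable over $K$ via $\alpha=b$.

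I expect the main obstacle to be the passage from the arithmetic group $\Gamma$, which need not be classical in odd dimensions, to a genuinely $K$-rational matrix $B$ with a controlled multiplier $b=\mu(B)$: this is exactly where Lemmas 7 and 9 and the total positivity of $\mu(B)$ for $n\geq 3$ are needed. A secondary point is cutting $B$ down to $V=\ker p(\delta')$ so that its characteristic polynomial isolates precisely the factor $p$; once that is in place, the rescaling $q(x)=c^{-m}r(cx)$ and the coefficient bookkeeping are routine.
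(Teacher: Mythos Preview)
Your argument is correct, and the overall architecture matches the paper's: both proofs invoke Theorem 4 on $\gamma^2$ to get that $\lambda$ is Salem with $K\subseteq\rationalnos(\lambda+\lambda^{-1})$, and both use Lemmas 7 and 9 to produce $B\in\mathrm{GO}(f,K)$ with $\gamma'=\pm\tfrac{1}{\sqrt{b}}B$ and $b=\mu(B)$ totally positive. Where you diverge is in the construction of $q$. The paper splits into cases according to whether $b$ is a square in $K$: if it is, $\gamma'$ is over $K$ and $\lambda^{1/2}$ is itself Salem (Lemma 17); if not, it passes to $L=K(\sqrt{b})$, takes $q$ to be the minimal polynomial of $\lambda^{1/2}$ over $L$, checks degrees to force $q(x)q(-x)=p(x^2)$, and then uses the Galois automorphism $\tau$ of $L/K$ (showing $q^\tau(x)=q(-x)$) to read off the parity pattern of the coefficients. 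You instead work uniformly: restrict $B$ to the $K$-rational subspace $V=\ker p(\delta')$, take $r(x)=\mathrm{char}(B|_V)\in K[x]$, and rescale $q(x)=c^{-m}r(cx)$; the coefficient parity then drops out of $c^2=b\in K$ with no Galois bookkeeping and no case split. Your route is a bit slicker and handles the ``$b$ a square'' case for free; the paper's route has the mild advantage of identifying $q$ intrinsically as a minimal polynomial over $L$, which links directly to the field diagram used later in \S10.
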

\begin{proof}
We may assume that $\Gamma \subseteq \O'(f,\R)$ and $\Gamma$ is  commensurable to $\mathrm{O}'(f,\mathfrak{o}_K)$ 
for some admissible quadratic form $f$ over $K$.  
There exists $B \in \mathrm{GO}(f,K)$ such that $\gamma = \frac{1}{\sqrt{b}}B$
with $b = \mu(B)$ totally positive by Lemmas \ref{lem:7} and \ref{lem:9}. 
Let $\lambda = e^{2\ell(\gamma)}$. 
Then $\lambda = e^{\ell(\gamma^2)}$, and so $\lambda$ is a Salem number 
with $K \subseteq \rationalnos(\lambda+\lambda^{-1})$ by Lemma \ref{lem:10} and Theorem \ref{thm:4}. 
If $b$ is a square in $K$, then $\gamma$ is over $K$ and $\lambda^{\frac{1}{2}}$ is a Salem number 
as in the proof of Theorem \ref{thm:4}(1),  
and so $\lambda$ is square-rootable over $K$ by Lemma \ref{lem:17}. 

For $b$ not a square in $K$, we consider the real quadratic extension $L = K(\sqrt{b})$.
We have that $\deg_L(\lambda^{\frac{1}{2}}) = (1/2)\deg_K(\lambda^{\frac{1}{2}})$ or $\deg_K(\lambda^{\frac{1}{2}})$
depending on whether or not $\sqrt{b}$ is in $K(\lambda^{\frac{1}{2}})$. 
If $\deg\lambda^{\frac{1}{2}}= 2$, then $\deg\lambda = 2$, and so $\lambda$ is square-rootable over $\Q = K$ 
by Lemma \ref{lem:18}.  Hence we may assume $\deg\lambda^{\frac{1}{2}}> 2$. 
If $\deg \lambda^{\frac{1}{2}} = \deg \lambda$, then $\lambda$ is square-rootable over $K$ 
by Lemmas \ref{lem:15} and \ref{lem:17}. 
Hence, we may assume that $\deg \lambda^{\frac{1}{2}} = 2\deg \lambda$.
Then $\deg_K(\lambda^{\frac{1}{2}}) = 2\deg_K(\lambda)$ by Lemmas \ref{lem:14} and \ref{lem:16}. 

Let $p(x)$ be the minimal polynomial of $\lambda$ over $K$, 
and let $q(x)$ be the minimal polynomial of $\lambda^{\frac{1}{2}}$ over $L$.
As $q(x)$ divides the characteristic polynomial of $\gamma$, the real roots of $q(x)$ are $\lambda^\frac{1}{2}$ 
and possibly $\lambda^{-\frac{1}{2}}$, and the complex roots occur in inverse pairs. 
The number field $L$ is totally real, since $b$ is a totally positive element of $K$. 
Hence, the same argument as in the proof of Lemma \ref{lem:14.5} shows that the real roots of $q(x)$ are 
$\lambda^{\frac{1}{2}}$ and $\lambda^{-\frac{1}{2}}$, the degree of $q(x)$ is even, 
and $q(x)$ is palindromic.

Assume that $\deg_L(\lambda^{\frac{1}{2}}) = \deg_K(\lambda^{\frac{1}{2}})$. 
Then $\deg q(x) = 2 \deg p(x)$, and it follows that $q(x) = p(x^2)$. This is a contradiction, since the real roots
of $q(x)$ are positive.  Thus we must have 
$$\deg q(x) = \deg_L(\lambda^{\frac{1}{2}}) = (1/2)\deg_K(\lambda^{\frac{1}{2}}) = \deg_K(\lambda) = \deg p(x).$$
As $\deg q(x)$ is even, $q(-x)$ is monic. Hence $q(-x)$ is the minimal polynomial of
$-\lambda^\frac{1}{2}$ over $L$, and so $q(-x)$ divides $p(x^2)$. It follows that $p(x^2) = q(x) q(-x)$.

Every element of $L$ is of the form $a+c\sqrt{b}$ with $a,c \in K$. 
Let $\tau$ be the automorphism of $L$ over $K$ defined by 
$\tau(a+c\sqrt{b}) =  a -c\sqrt{b}$. 
Now $q(x)q^\tau(x)$ is over $K$, and so $q(x)q^\tau(x) = p(x^2)$, 
since $p(x^2)$ is the minimal polynomial of $\lambda^{\frac{1}{2}}$ over $K$.  
Therefore $q^\tau(x) = q(-x)$. 
Hence the even degree coefficients of $q(x)$ are in $K$ and the odd degree coefficients are in $\sqrt{b}K$. 
Thus $\lambda$ is square-rootable over $K$ via $b$. 
\end{proof}

\begin{theorem} 
\label{thm:8}
Let $\lambda$ be a Salem number, let $K$ be a subfield of $\rationalnos(\lambda+\lambda^{-1})$, 
and let $n \geq 3$ be an odd integer with $\deg_K(\lambda) \leq n+1$. 
If $\lambda$ is square-rootable over $K$,  
then there exist an arithmetic lattice $\Gamma \subseteq \Isom(H^n)$ of the simplest type defined over $K$ 
and an orientation preserving hyperbolic element $\gamma$ in $\Gamma$ such that $\lambda^{\frac{1}{2}} = e^{\ell(\gamma)}$. 
\end{theorem}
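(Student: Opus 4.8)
The plan is to run the construction from the proof of Theorem 6 with the palindromic polynomial $q(x)$ supplied by square-rootability in place of the minimal polynomial, producing a hyperbolic matrix $\gamma$ with $e^{\ell(\gamma)}=\lambda^{\frac{1}{2}}$ that is defined only over $L:=K(\sqrt{\alpha})$, and then to descend to a genuine arithmetic group over $K$ by exploiting that $\gamma^2$ — unlike $\gamma$ — is $K$-rational with integral characteristic polynomial. First I would normalize: replacing $\alpha$ by $\alpha c^2$ for $c\in K^{\ast}$ does not affect the defining property of square-rootability, so I may assume $\alpha\in\mathfrak{o}_K$ is totally positive; and if $\lambda^{\frac{1}{2}}$ is a Salem number I instead apply Theorem 6 directly to $\lambda^{\frac{1}{2}}$, using $\deg_K(\lambda^{\frac{1}{2}})=\deg_K(\lambda)\le n+1$ by Lemmas 14--16 — so I may assume $\lambda^{\frac{1}{2}}$ is not a Salem number, whence $\sqrt{\alpha}\notin K$ by Lemma 17. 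Let $p(x)$ be the minimal polynomial of $\lambda$ over $K$: monic over $\mathfrak{o}_K$, palindromic, of even degree $m=\deg_K(\lambda)=2\ell$. Let $q(x)$ be the monic palindromic polynomial with $q(x)q(-x)=p(x^2)$; since $\alpha$ is totally positive $q(x)$ is a real polynomial, and its roots, being square roots of the simple roots of $p(x)$, are simple, with the real ones among $\pm\lambda^{\pm\frac{1}{2}}$. Replacing $q(x)$ by $q(-x)$ if necessary (which preserves all its stated properties), I arrange that $\lambda^{\frac{1}{2}}$, and hence $\lambda^{-\frac{1}{2}}$ by palindromy, is a root of $q(x)$, so the real roots of $q(x)$ are exactly $\lambda^{\pm\frac{1}{2}}$ and all other roots lie on the unit circle; one checks $q(x)$ is over $\mathfrak{o}_L$.

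Exactly as in the proof of Theorem 6, I would build the block-diagonal matrix $M_0\in\mathrm{O}'(m-1,1)$ with a $2\times 2$ rotation block for each complex root pair of $q(x)$ and one hyperbolic block of translation length $\eta:=\tfrac{1}{2}\log\lambda$, so that $M_0$ is hyperbolic with characteristic polynomial $q(x)$, $\det M_0=1$, and $e^{\ell(M_0)}=\lambda^{\frac{1}{2}}$. The Vandermonde conjugation of that proof then yields $\gamma:=(W')^{-1}M_0W'=\tfrac{1}{\sqrt{\alpha}}\tilde{C}$, where $\tilde{C}$ is the companion matrix of $\tilde{q}(x):=\alpha^{\ell}q(x/\sqrt{\alpha})$; the parity condition on the coefficients of $q(x)$ forces $\tilde{q}(x)$ to have coefficients in $K$, and since its roots are $\sqrt{\alpha}$ times roots of $q(x)$, hence algebraic integers, $\tilde{q}(x)$ is monic over $\mathfrak{o}_K$ and $\tilde{C}$ is over $\mathfrak{o}_K$. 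Moreover $\gamma$ preserves the quadratic form $\tilde{f}$ over $K$ of signature $(m-1,1)$ with Gram matrix $\tilde{A}:=(W')^{t}JW'$, and $\tilde{C}^{t}\tilde{A}\tilde{C}=\alpha\tilde{A}$. Admissibility of $\tilde{f}$ is checked as in Theorem 6: for a nonidentity embedding $\sigma:K\to\realnos$, applying $\sigma$ to $q(x)q(-x)=p(x^2)$ gives $\tilde{q}^{\sigma}(x)\tilde{q}^{\sigma}(-x)=\sigma(\alpha)^{m}p^{\sigma}(x^2/\sigma(\alpha))$, and since the conjugate polynomials $p^{\sigma}$ ($\sigma$ nonidentity) have all roots on the unit circle (as in the proof of Theorem 4), all roots of $\tilde{q}^{\sigma}(x)$ lie on the circle of radius $\sqrt{\sigma(\alpha)}$; the entries of $\tilde{A}$ are a fixed expression in $\alpha$ and the coefficients of $\tilde{q}(x)$ via Newton's identities, and evaluating this expression after $\sigma$ reproduces the Gram matrix $W_{\sigma}^{t}W_{\sigma}$ of the block-rotation matrix $M_{\sigma}$ with characteristic polynomial $\tilde{q}^{\sigma}(x)$, which is positive definite.

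The element $\gamma$ is hyperbolic of translation length $\eta$, so $\lambda^{\frac{1}{2}}=e^{\ell(\gamma)}$, and $\det\gamma=1$. The obstacle is that $\gamma$ is defined only over $L$; it cannot be defined over $K$, since otherwise its characteristic polynomial would be over $K$ and Theorem 4 would force $\lambda^{\frac{1}{2}}$ to be a Salem number, contrary to assumption — in particular $\Gamma$ cannot be taken classical, and $\gamma$ lies in no group commensurable with $\mathrm{O}'(\tilde{f},\mathfrak{o}_K)$ as it stands. The way around this is that $\gamma^2=\tfrac{1}{\alpha}\tilde{C}^2$ is defined over $K$, and its characteristic polynomial is $p(x)$: the squares of the eigenvalues of $\gamma$ are the squares of the roots of $q(x)$, which are exactly the roots of $p(x)$ and are pairwise distinct. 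Since $p(x)$ is monic over $\mathfrak{o}_K$, irreducible, and palindromic, $\gamma^2$ is integral over $\mathfrak{o}_K$ with $\det\gamma^2=1$, so $\mathfrak{o}_K[\gamma^2]$ is a finite $\mathfrak{o}_K$-algebra in which $\gamma^2$ is a unit; hence $\gamma^2$ preserves the full-rank $\mathfrak{o}_K$-lattice $\Lambda:=\mathfrak{o}_K[\gamma^2]\,v_0$ for a cyclic vector $v_0$ (which exists since $p(x)$ is the minimal polynomial). Choosing a basis of $\Lambda$ and rescaling $\tilde{f}$ by the square of a common denominator of its Gram matrix in that basis — the square keeps it totally positive, hence admissible — I obtain an admissible quadratic form $f$ over $K$ of signature $(m-1,1)$ with $\gamma^2\in\mathrm{O}'(f,\mathfrak{o}_K)$.

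Finally, $\gamma$ lies in the commensurator of $\mathrm{O}'(f,\mathfrak{o}_K)$: conjugation by $\gamma$ agrees with conjugation by $\tilde{C}$ (the scalar $\tfrac{1}{\sqrt{\alpha}}$ cancels), and $\tilde{C}$, $\tilde{C}^{-1}$ have entries over $\mathfrak{o}_K$ with a common denominator, so by the argument of Lemma 6, $\gamma$ normalizes, up to commensurability, a principal congruence subgroup $\Gamma_0$ of $\mathrm{O}'(f,\mathfrak{o}_K)$; setting $\Gamma_1:=\Gamma_0\cap\gamma\Gamma_0\gamma^{-1}$ and using that $\gamma^2\in\mathrm{O}'(f,\mathfrak{o}_K)$ normalizes the principal congruence subgroup $\Gamma_0$, one checks $\gamma\Gamma_1\gamma^{-1}=\Gamma_1$. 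As $\gamma^2\in\mathrm{O}'(f,\mathfrak{o}_K)$ and $[\mathrm{O}'(f,\mathfrak{o}_K):\Gamma_1]<\infty$, some power $\gamma^{2k}$ lies in $\Gamma_1$, so $\Gamma:=\langle\Gamma_1,\gamma\rangle$ contains $\Gamma_1$ normally with finite index and is therefore commensurable with $\mathrm{O}'(f,\mathfrak{o}_K)$ — an arithmetic group of isometries of $H^{m-1}$ of the simplest type over $K$ — and it contains the orientation-preserving hyperbolic element $\gamma$ with $e^{\ell(\gamma)}=\lambda^{\frac{1}{2}}$. Since $m-1\le n$, padding all of the above block-diagonally with an identity block of size $n+1-m$, exactly as at the end of the proof of Theorem 6, produces the required group in $H^{n}$. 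I expect the main obstacle to be precisely this descent step: because $\gamma$ is genuinely irrational over $K$, one is forced to realize its square $K$-rationally and integrally and then rebuild an honest $K$-arithmetic group around it — all the earlier work (the polynomial $q(x)$, the matrix $M_0$, the form $\tilde{f}$) being a fairly direct transcription of Theorem 6.
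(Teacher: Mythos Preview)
Your core construction in dimension $m-1$ is correct but takes a different route from the paper. The paper uses the Vandermonde matrix $V$ for the roots of $p(x)$ (not of $q(x)$), so that the quadratic form $A=W^tJW$ is literally the one produced by Theorem~6 applied to $p(x)$---over $K$ and admissible for free---and then sets $D=V^{-1}\,\mathrm{diag}(s_1,\dots,s_m)\,V$, so that $D^2$ is the companion matrix of $p(x)$; the new work is a Galois-group argument over the splitting field of $p(x^2)$ showing that $\sqrt{\alpha}\,D$ is over $K$. You instead rescale $q$ to $\tilde q(x)=\alpha^{\ell}q(x/\sqrt{\alpha})$ and take $\gamma=\frac{1}{\sqrt{\alpha}}\tilde C$; this makes $\sqrt{\alpha}\,\gamma=\tilde C$ over $\mathfrak{o}_K$ immediate from the parity condition on the coefficients of $q$, so you avoid the Galois argument entirely, at the price of having to verify separately that your form $\tilde f$ is over $K$ and admissible (it is, but the entries are $\tilde a_{jk}=\tfrac12\alpha^{j-1}\tilde P_{k-j}$ with $\tilde P_n$ the Newton power sums of $\tilde q$, not the simple Toeplitz matrix of Theorem~6, so this needs to be written out). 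Both routes then finish with the same commensurator argument.

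There is, however, a genuine gap in your padding step. Padding $\gamma=\frac{1}{\sqrt{\alpha}}\tilde C$ by an identity block gives $\hat\gamma=\mathrm{diag}(I_{n+1-m},\gamma)$, and this is no longer a scalar multiple of any matrix over $K$: if $c\hat\gamma$ were over $K$, the identity block would force $c\in K$, whence $c\gamma$ would be over $K$, contradicting $\sqrt{\alpha}\notin K$. Consequently conjugation by $\hat\gamma$ does \emph{not} agree with conjugation by any element of $\mathrm{GL}(n+1,K)$: for $X\in\mathrm{O}'(\hat f,\mathfrak{o}_K)$ with off-diagonal blocks $X_{12},X_{21}$, the conjugate $\hat\gamma X\hat\gamma^{-1}$ has off-diagonal blocks $X_{12}\gamma^{-1}=\sqrt{\alpha}\,X_{12}\tilde C^{-1}$ and $\gamma X_{21}=\tfrac{1}{\sqrt{\alpha}}\tilde C X_{21}$, which lie in $\sqrt{\alpha}\,K$ and are over $K$ only when $X_{12}=X_{21}=0$. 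The block-diagonal elements form a subgroup of infinite index, so $\hat\gamma$ is not in the commensurator of $\mathrm{O}'(\hat f,\mathfrak{o}_K)$ and the descent collapses. The paper repairs this by padding not with identity blocks but with $(n+1-m)/2$ copies of the $2\times2$ block
\[
D_1=\begin{pmatrix}0&-\sqrt{\alpha}\\ 1/\sqrt{\alpha}&0\end{pmatrix},
\]
and correspondingly padding the form with $\mathrm{diag}(1,\alpha)$ (still admissible since $\alpha$ is totally positive); then $\sqrt{\alpha}\,\hat D$ remains over $K$ and the commensurator argument goes through unchanged.
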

\begin{proof}
Let $p(x)$ be the minimal polynomial of $\lambda$ over $K$. 
Then $p(\lambda^{-1})=0$ and $m=\deg p(x)$ is even 
by Lemma \ref{lem:14.5}

Assume that $\lambda$ is square-rootable over $K$.  Then there exist a totally positive element $\alpha$ of $K$ 
and a monic palindromic polynomial $q(x)$, whose even degree coefficients are in $K$ 
and whose odd degree coefficients are in $\sqrt{\alpha}K$, such that $q(x)q(-x) = p(x^2)$. 
Then $\deg q(x) = \deg p(x) =m$. 
As $\lambda^{\frac{1}{2}}$ is a root of $p(x^2)$, we have that $\lambda^{\frac{1}{2}}$ is a root of either $q(x)$ or $q(-x)$. 
By replacing $q(x)$ with $q(-x)$, if necessary, we may assume that $\lambda^{\frac{1}{2}}$ is a root of $q(x)$. 

The roots of $p(x^2)$ are of the form $\pm r^{\frac{1}{2}}$ where $r$ is a root of $p(x)$. 
Hence the complex roots of $p(x^2)$ have absolute value 1, and so occur in inverse pairs. 
Therefore the complex roots of $q(x)$ occur in inverse pairs. 
The real roots of $p(x^2)$ are $\lambda^{\pm\frac{1}{2}}$ and $-\lambda^{\pm\frac{1}{2}}$. 
The constant term of $q(x)$ is 1, since $q(x)$ is monic and palindromic. 
Therefore $q(x)$ has $\lambda^{-\frac{1}{2}}$ as a root,  
and so the real roots of $q(x)$ are $\lambda^{\pm\frac{1}{2}}$. 

Assume that $\lambda^{\frac{1}{2}}$ is a Salem number.  Then $\deg \lambda^{\frac{1}{2}} = \deg \lambda$ by Lemma \ref{lem:15},   
and so $\deg_K(\lambda^{\frac{1}{2}}) = \deg_K(\lambda)$ by Lemmas \ref{lem:14} and \ref{lem:16}. 
Hence $\deg_K(\lambda^{\frac{1}{2}}) \leq n+1$.  We have that 
$K \subseteq \rationalnos(\lambda + \lambda^{-1}) \subseteq  \rationalnos(\lambda^{\frac{1}{2}} + \lambda^{-\frac{1}{2}})$. 
Hence there exist an arithmetic group $\Gamma$ of isometries of $H^n$ of the simplest type over $K$ 
and an orientation preserving hyperbolic element $\gamma$ in $\Gamma$ such that $\lambda^{\frac{1}{2}} = e^{\ell(\gamma)}$ 
by Theorem \ref{thm:6}.

Thus we may assume that $\lambda^{\frac{1}{2}}$ is not a Salem number. 
Then $\alpha$ is not a square in $K$ by Lemma \ref{lem:17}. 
Therefore $L = K(\sqrt{\alpha})$ is a quadratic extension of $K$. 
Let $q(x) = a_0+ a_1x+ \cdots + a_mx^m$, and let $\ell = m/2$.   
Then $a_{2j} \in K$ for $j = 0, \ldots, \ell$ and $a_{2j-1} \in \sqrt{\alpha}K$ for $j = 1, \ldots, \ell$. 
Let $b_{2j-1} = a_{2j-1}/\sqrt{\alpha}$ for  $j = 1, \ldots, \ell$. 
Then $b_{2j-1} \in K$ for $j = 1, \ldots, \ell$. 
As $q(\lambda^{\frac{1}{2}}) = 0$, we have that 
$$\sqrt{\alpha}\big(b_1\lambda^{\frac{1}{2}} +b_3\lambda^{\frac{3}{2}} + \cdots + b_{m-1}\lambda^{\frac{m-1}{2}}\big) 
= -\big(a_0 +a_2\lambda +\cdots + a_{m}\lambda^{\ell}\big).$$
Now $a_0 +a_2\lambda +\cdots + a_{m}\lambda^{\ell} \neq 0$, since $\ell < m = \deg p(x)$. 
Therefore $\sqrt{\alpha} \in K(\lambda^{\frac{1}{2}})$, 
and so $L$ is a subfield of $K(\lambda^{\frac{1}{2}})$. 

Next we show that the roots of $p(x^2)$ are simple. 
Assume first that $\deg \lambda^{\frac{1}{2}} = \deg \lambda$. 
As $\lambda^{\frac{1}{2}}$ is not a Salem number, 
$\deg \lambda^{\frac{1}{2}} = 2$ by Lemma \ref{lem:15}. 
Then $\deg \lambda = 2$. 
As $K \subseteq \rationalnos(\lambda+\lambda^{-1}) = \rationalnos$, 
we have that $K = \rationalnos$. 
Therefore $\deg p(x) = 2$, and so the roots of $p(x^2)$ are  $\lambda^{\pm\frac{1}{2}}$ and $-\lambda^{\pm\frac{1}{2}}$ . 
Hence the roots of $p(x^2)$ are simple. 
Now assume that $\deg \lambda^{\frac{1}{2}} \neq \deg \lambda$. 
Then  $\deg \lambda^{\frac{1}{2}} = 2 \deg \lambda$. 
Hence  $\deg_K(\lambda^{\frac{1}{2}}) = 2 \deg_K(\lambda)$ by Lemmas \ref{lem:14} and \ref{lem:16}.
Therefore $p(x^2)$ is the minimal polynomial of $\lambda^{\frac{1}{2}}$ over $K$. 
Hence the roots of $p(x^2)$ are simple. 
In either case, as $q(x)q(-x) = p(x^2)$, the roots of $q(x)$ and $q(-x)$ are simple, and the roots of $q(x)$ are distinct from the roots of $q(-x)$. 

Let $s_1, s_2, \ldots, s_{m-1} = \lambda^{-\frac{1}{2}}, s_m=\lambda^{\frac{1}{2}}$ be the roots of $q(x)$ taken 
with $s_{2j} = \overline{s}_{2j-1}$ of absolute value 1.  
Say $s_{2j} =e^{i\theta_j}$, for $j = 1,\ldots,\ell -1$, and $s_m = \lambda^{\frac{1}{2}} = e^\eta$. 
Then the roots of $q(-x)$ are $-s_1,  \ldots, -s_m$. 
As $s_1, \ldots, s_m, -s_1, \ldots, -s_m$ are the roots of $p(x^2)$, 
the roots of $p(x)$ are $r_k = s^2_k$ for $k = 1, \ldots, m$. 
Now  $r_{2j} = e^{i2\theta_j}$ for $j = 1,\ldots, \ell -1$ and $r_m = \lambda = e^{2\eta}$. 

Let $S = \mathrm{diag}(s_1, s_2, \ldots, s_m)$ and 
let $B$ be the $m\times m$ block diagonal matrix with the first $\ell-1$ blocks 
$$\left(\begin{array}{cr}  1 & -i\\
                                         1& i 
                                     \end{array}\right)\ \ \hbox{and last block} \ \ 
                                     \left(\begin{array}{cr}  1& -1\\
                                      				     1& 1
                                     \end{array}\right).$$
Then $M = B^{-1}SB$ is the block diagonal $m \times m$ matrix with blocks 
$$\left(\begin{array}{cr}  \cos \theta_j &-\sin \theta_j \\
                                     \sin \theta_j & \cos \theta_j
                                     \end{array}\right) 
\ \ \hbox{for}\ \ 1 \leq j < \ell,\ \ \hbox{and} \ \ 
\left(\begin{array}{cc}  \cosh \eta & \sinh \eta \\
                                     \sinh \eta & \cosh \eta
                                     \end{array}\right).$$
The matrix $M$ represents a hyperbolic element of $\mathrm{O}'(m-1,1)$ with characteristic polynomial $q(x)$, 
since the eigenvalues of $M$ are $e^{\pm i\theta_1},\ldots, e^{\pm i\theta_{\ell-1}}$ and $e^{\pm\eta}$. 
Moreover $\det M = 1$, and so $M$ represents an orientation preserving isometry of $H^{m-1}$.  

Let $V = (v_{ij}) = (r_i^{j-1})$ be the Vandermonde matrix for the roots of $p(x)$. 
Then $V$ is invertible, since the roots of $p(x)$ are distinct. 
Let $R = \mathrm{diag}(r_1,\ldots, r_m)$.  
Then $R = S^2$. 
Let $C$ be the companion matrix for $p(x)$. 
Then $C$ is over $\mathfrak{o}_K$ 
and $VC =RV$. 
Let $D = V^{-1}SV$.  Then $D^2 = V^{-1}RV = C$. 
Let $W = B^{-1}V$. Then  
$$WDW^{-1} = (B^{-1}V)(V^{-1}SV)(B^{-1}V)^{-1}= B^{-1}SB = M.$$

Let $J$ be the $m\times m$ matrix $\mathrm{diag}(1,\ldots, 1, -1)$. 
Then $M^tJM = J$. 
Let $A = W^t JW$. 
Then $A$ is a symmetric $m \times m$ matrix, and as in the proof of Theorem \ref{thm:6},  we have that 
$A = (\sum_{k=1}^m r_k^{i-j}/2)$ and $2A$ is over $\mathfrak{o}_K$.  
Now $A$ is the coefficient matrix of a quadratic form $f$ over $K$ in $m$ variables.
If $x \in \realnos^m$, then 
$$f(x) = x^tAx = x^tW^tJWx = (Wx)^tJWx = f_{m-1}(Wx),$$
and so $f$ has signature $(m-1,1)$ and 
$$\mathrm{O}'(f,\realnos) = W^{-1}\mathrm{O}'(m-1,1)W.$$
Now $M \in \mathrm{O}'(m-1,1)$ and $D = W^{-1}MW$.  
Hence $D \in \mathrm{O}'(f,\realnos)$.
The quadratic form $f$ is admissible by the same argument as in the proof of Theorem \ref{thm:6}.  
Note that as $C = D^2$, the matrix $M^2$ plays the role of $M$ in the proof of Theorem \ref{thm:6}.

We next show that the matrix $\sqrt{\alpha}D$ is over $K$ by a Galois group argument. 
Let $\hat K = K(s_1,\ldots,s_m)$ be the splitting field of $p(x^2)$ over $K$, 
and let $G = \mathrm{Gal}(\hat K/K)$. 
As $L$ is a quadratic extension of $K$ contained in $\hat K$, 
there is an index 2 subgroup $H$ of $G$ such that $H = \mathrm{Gal}(\hat K/L)$ by Theorem 3 on p 196 of \cite{Lang}. 
Now $\hat K$ is also the splitting field of $q(x)$ over $L$, and so $H$ is the Galois group of $q(x)$. 
Hence all the elements of $H$ permute the roots $s_1,\ldots, s_m$ of $q(x)$ among themselves.  

Every element of $L$ is of the form $a+c\sqrt{\alpha}$ with $a,c \in K$. 
Let $\tau$ be the automorphism of $L$ over $K$ defined by 
$\tau(a+c\sqrt{\alpha}) =  a -c\sqrt{\alpha}$. 
Then $\tau$ extends to an automorphism $\hat \tau$ of $\hat K$. 
Observe that 
$$\prod_{j=1}^m(x-\hat\tau(s_j)) = q^\tau(x) = q(-x) = \prod_{j=1}^m(x+s_j).$$
Hence we have that $\hat\tau(\{s_1,\ldots,s_m\}) = \{-s_1,\ldots, -s_m\}$. 
Let $\sigma \in G$. 
If $\sigma \in H\hat \tau$, then $\sigma$ extends $\tau$ and $\sigma(\{s_1,\ldots,s_m\}) = \{-s_1,\ldots, -s_m\}$. 
Let $\pi_\sigma$ be the permutation of the indices $1, \ldots, m$ such that $\sigma(s_k) = \pm s_{\pi_\sigma(k)}$ 
for all $k = 1,\ldots, m$, with the plus sign if and only if $\sigma \in H$. 
Then $\sigma(r_k) = \sigma(s^2_k) =s^2_{\pi_\sigma(k)}=r_{\pi_\sigma(k)}$, and so $\sigma$ acts on the roots of $p(x)$ via 
$\pi_\sigma$. 
Hence 
$$V^\sigma = (\sigma(v_{ij})) = (\sigma(r_i^{j-1})) = (r_{\pi_\sigma(i)}^{j-1}) = (v_{\pi_\sigma(i),j}),$$ 
that is, $\sigma$ acts on $V$ by permuting rows via $\pi_\sigma$. 
But then $\sigma$ acts on $V^{-1}$ by permuting columns via $\pi_\sigma$. 
Let $V^{-1} = T = (t_{ij})$.  Then $T^\sigma = (t_{i,\pi_\sigma(j)})$. 

Now $D = V^{-1}SV$, and so the $ij$-entry of $D$ is $d_{ij} = \sum_{k=1}^mt_{ik}s_kv_{kj}$. 
Then 
$$\sigma(d_{ij}) = \sum_{k=1}^mt_{i,\pi_\sigma(k)}(\pm s_{\pi_\sigma(k)})v_{\pi_\sigma(k),j}=\pm d_{ij}$$
with the plus sign if and only if $\sigma \in H$.  Hence $(\sqrt{\alpha}D)^\sigma = \sqrt{\alpha}D$ for all $\sigma \in G$, 
and therefore $\sqrt{\alpha}D$ is over $K$. 

Let $m$ be a positive integer such that $E = m\sqrt{\alpha}D$ is over $\mathfrak{o}_K$. 
Then $\det E = m^m\alpha^{\frac{m}{2}}$. 
Let $\varepsilon = m^2\alpha$.  Then $\varepsilon^{\frac{m}{2}} = \det E$ is in $\mathfrak{o}_K$. 
Hence $\varepsilon \in \mathbb{A}\cap K = \mathfrak{o}_K$. 
We have that $\frac{1}{\sqrt{\varepsilon}}E= D$,  
and so $E^2 = \varepsilon D^2 = \varepsilon C$. 

Let $\Phi$ be the congruence $\varepsilon$ subgroup of $\mathrm{O}'(f,\mathfrak{o}_K)$. 
Then $\Phi$ is a normal subgroup of $\mathrm{O}'(f,\mathfrak{o}_K)$ of finite index, 
since the quotient ring $\mathfrak{o}_K/(\varepsilon)$ is finite.

Let $\Psi$ be the subgroup of $\mathrm{O}'(f,\realnos)$ generated by $C$ and the elements of $\Phi$  and  $D\Phi D^{-1}$. 
We claim that $\Psi$ is a subgroup of $\mathrm{O}'(f,\mathfrak{o}_K)$. 
First of all, $C \in \mathrm{O}'(f,\mathfrak{o}_K)$. 
Let $X \in \Phi$.  Then $DXD^{-1} = DXDC^{-1}$. 
Now $EXE$ is congruent modulo $\varepsilon$ to $E^2 = \varepsilon C$, 
and so $DXD = EXE/\varepsilon$ is over $\mathfrak{o}_K$. 
Hence $DXD^{-1} \in \mathrm{O}'(f,\mathfrak{o}_K)$. 
Therefore $D\Phi D^{-1}$ is a subgroup of $\mathrm{O}'(f,\mathfrak{o}_K)$, 
and so $\Psi$ is a subgroup of $\mathrm{O}'(f,\mathfrak{o}_K)$. 

Let $\Delta$ be the subgroup of $\mathrm{O}'(f,\realnos)$ generated by $D$ and the elements of $\Psi$.
We claim that $\Psi$ is a normal subgroup of $\Delta$. 
It suffices to show that $D\Psi D^{-1} = \Psi$. 
As $C=D^2$, we have that $DCD^{-1} = C$. 
Now $D(D\Phi D^{-1})D^{-1}= C\Phi C^{-1} = \Phi$, since $\Phi$ is a normal subgroup of $\mathrm{O}'(f,\mathfrak{o}_K)$. 
Therefore $D$ conjugates the set of generators of $\Psi$ to itself. 
Hence $D\Psi D^{-1} = \Psi$, and so $\Psi$ is a normal subgroup of $\Delta$.

Now $D$ is not over $K$, and so $D$ is not in $\Psi$. 
Hence $\Psi$ is a subgroup of index 2 in $\Delta$, since $D^2 = C$ is in $\Psi$. 
Moreover  $\Delta \cap \mathrm{O}'(f,\mathfrak{o}_K) = \Psi$. 
We have that $\Psi$ has finite index in $\mathrm{O}'(f,\mathfrak{o}_K)$. 
Therefore $\Delta$ is commensurable to $\mathrm{O}'(f,\mathfrak{o}_K)$. 
Hence $\Gamma = W\Delta W^{-1}$ is an arithmetic group of isometries of $H^{m-1}$ of the simplest type defined over $K$.  
We have that $\gamma = M = WDW^{-1}$ is an orientation preserving hyperbolic element of $\Gamma$ 
such that $\lambda^{\frac{1}{2}} = e^{\ell(\gamma)}$.  
If $m = n + 1$, we are done, so assume that $m < n+1$. 

Consider the following $2\times 2$ matrices
$$D_1=\left(\begin{array}{cc} 0 & -\sqrt{\alpha} \\ \frac{1}{\sqrt{\alpha}} & 0 \end{array}\right), \ 
C_1=\left(\begin{array}{cc} -1 & 0 \\ 0 & -1 \end{array}\right), \ 
A_1=\left(\begin{array}{cc}  1 & 0 \\ 0 & \alpha  \end{array}\right), $$ 
$$W_1=\left(\begin{array}{cc}  1 & 0 \\ 0 & \sqrt{\alpha}  \end{array}\right), \ 
J_1=\left(\begin{array}{cc}  1 & 0 \\ 0 & 1 \end{array}\right), \ 
M_1=\left(\begin{array}{cc}  0 & -1 \\ 1 & 0  \end{array}\right).$$
If $X$ is an $m \times m$ matrix, let $\hat X$ be the block diagonal $(n+1)\times (n+1)$ matrix with $(n+1-m)/2$ blocks of the form $X_1$ and final block $X$. 
Then $\hat D^2 = \hat C$. 
Now $\hat A$ is a symmetric $(n+1)\times (n+1)$ matrix such that $\hat A = \hat W^t \hat J \hat W$. 
Hence $\hat A$ is the coefficient matrix of a quadratic form $\hat f$ over $K$ in $n+1$ variables  
of signature $(n,1)$ with $\mathrm{O}'(\hat f, \realnos) = \hat W^{-1}\mathrm{O}^+(n,1)\hat W$. 
Moreover $\hat f$ is admissible, since $f$ is admissible and $\alpha$ is totally positive. 
We have that $\hat D^t \hat A \hat D = \hat A$ and $\hat W\hat D \hat W^{-1} = \hat M$, 
and so $\hat D$ is in $\mathrm{O}'(\hat f, \realnos)$. 
Moreover $\sqrt{\alpha}\hat D$ is over $K$. 

Let $\hat m$ be a positive integer such that $\hat m\sqrt{\alpha}\hat D$ is over $\mathfrak{o}_K$, 
and let $\hat \varepsilon = \hat m^2\alpha$. Then as above, $\hat \varepsilon\in \mathfrak{o}_K$. 
Let $\hat \Phi$ be the congruence $\hat \varepsilon$ subgroup of $\mathrm{O}'(\hat f, \mathfrak{o}_K)$, 
let $\hat \Psi$ be the subgroup of $\mathrm{O}'(\hat f,\realnos)$ generated by $\hat C$ and the elements of $\hat\Phi$ 
and $\hat D\hat \Phi \hat D^{-1}$, and let $\hat \Delta$ be the subgroup of $\mathrm{O}'(\hat f,\realnos)$ generated by $\hat D$ and the elements of $\hat\Psi$.  
Then as above, $\hat\Delta$ is commensurable to $\mathrm{O}'(\hat f, \mathfrak{o}_K)$. 
Hence $\hat \Gamma = \hat W\hat \Delta\hat W^{-1}$ is an arithmetic group of isometries of $H^n$ of the simplest type 
defined over $K$. 
We have that $\hat \gamma = \hat M = \hat W\hat D\hat W^{-1}$ is an orientation preserving hyperbolic element of $\hat \Gamma$ 
such that $\lambda^{\frac{1}{2}} = e^{\ell(\hat\gamma)}$. 
\end{proof}

The next corollary is an enhanced version of Corollary \ref{cor:3}. 

\begin{corollary}  
\label{cor:7}
Let $\lambda$ be a Salem number. 
Then for each odd integer $n \geq 3$, 
there exist an arithmetic lattice $\Gamma \subseteq \Isom(H^n)$ of the simplest type 
defined over $\rationalnos(\lambda+\lambda^{-1})$ and an orientation preserving hyperbolic element $\gamma$ of $\Gamma$  
such that $\gamma^4$ is a hyperbolic translation 
and $\lambda^{\frac{1}{2}} = e^{\ell(\gamma)}$. 
\end{corollary}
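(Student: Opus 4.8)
The plan is to derive this corollary from Theorem~8 and then to read off the additional assertion about $\gamma^4$ from the explicit constructions used in the proof of Theorem~8. First I would put $K = \rationalnos(\lambda+\lambda^{-1})$. Since $[\rationalnos(\lambda):K] = 2$ by Lemma~4, the minimal polynomial of $\lambda$ over $K$ is $p(x) = x^2 - (\lambda+\lambda^{-1})x + 1$, so $\deg_K(\lambda) = 2 \leq n+1$ for every positive integer $n$. Next I would check that $\lambda$ is square-rootable over $K$ via $\alpha = \lambda + 2 + \lambda^{-1}$. This $\alpha$ lies in $K$ and is totally positive, because a nonidentity embedding $\sigma$ of $K$ sends $\lambda + \lambda^{-1}$ to $2\cos\theta$ with $0 < \theta < \pi$ (by the proof of Lemma~4), so that $\sigma(\alpha) = 2 + 2\cos\theta > 0$; and the monic palindromic polynomial $q(x) = x^2 + \sqrt{\alpha}\,x + 1$, whose even degree coefficients lie in $K$ and whose odd degree coefficient $\sqrt{\alpha}$ lies in $\sqrt{\alpha}\,K$, satisfies
$$q(x)q(-x) = (x^2+1)^2 - \alpha x^2 = x^4 - (\lambda+\lambda^{-1})x^2 + 1 = p(x^2).$$
Theorem~8, applied with the given odd $n$, then supplies an arithmetic group $\Gamma$ of isometries of $H^n$ of the simplest type defined over $K$ and an orientation preserving hyperbolic element $\gamma$ of $\Gamma$ with $\lambda^{\frac{1}{2}} = e^{\ell(\gamma)}$.

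To upgrade this to the statement that $\gamma^4$ is a hyperbolic translation, I would trace through the proof of Theorem~8 in the special case $m := \deg p(x) = \deg_K(\lambda) = 2$, so $\ell = m/2 = 1$. If $\lambda^{\frac{1}{2}}$ is a Salem number, that proof produces $\gamma$ from Theorem~6 applied to $\lambda^{\frac{1}{2}}$ with $\deg_K(\lambda^{\frac{1}{2}}) = 2$, and, as in the proof of Corollary~5, the construction in Theorem~6 from a quadratic minimal polynomial yields $\gamma$ itself as a hyperbolic translation. If $\lambda^{\frac{1}{2}}$ is not a Salem number, that proof builds $\gamma$ as a block diagonal matrix: since $\ell = 1$ there are no rotation blocks, the only block carrying the translation is the $2\times 2$ hyperbolic block with eigenvalues $\lambda^{\pm\frac{1}{2}}$, and the passage from dimension $m-1 = 1$ up to dimension $n$ inserts $(n-1)/2$ copies of the block $M_1 = \left(\begin{smallmatrix} 0 & -1 \\ 1 & 0 \end{smallmatrix}\right)$, which satisfies $M_1^4 = I$. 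In either case every eigenvalue of $\gamma$ other than $\lambda^{\pm\frac{1}{2}}$ is a fourth root of unity. Since $\gamma$ is hyperbolic, so is $\gamma^4$, with $\ell(\gamma^4) = 4\ell(\gamma) = 2\log\lambda$; all eigenvalues of $\gamma^4$ besides $\lambda^{\pm 2} = e^{\pm\ell(\gamma^4)}$ equal $1$, so $\gamma^4$ is a hyperbolic translation by Proposition~1 of \cite{G} (cf.\ the proof of Lemma~11(3)).

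I do not expect a serious obstacle: the two hypotheses feeding Theorem~8 are elementary for $K = \rationalnos(\lambda+\lambda^{-1})$, square-rootability being witnessed by the single quadratic $q(x) = x^2 + \sqrt{\alpha}\,x + 1$. The only step requiring care is the bookkeeping for the final claim, namely confirming that in this degree-$2$ situation all the auxiliary matrices inserted in the proof of Theorem~8 when enlarging the dimension (the blocks $M_1$, or the identity blocks coming from Theorem~6) have order dividing $4$, so that passing to the fourth power removes every root-of-unity eigenvalue; once that is verified the eigenvalue count is immediate.
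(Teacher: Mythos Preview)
Your proposal is correct and follows essentially the same approach as the paper: set $K=\rationalnos(\lambda+\lambda^{-1})$, verify $\deg_K(\lambda)=2$, exhibit square-rootability via $\alpha=\lambda+2+\lambda^{-1}$ with the quadratic palindromic $q$, invoke Theorem~8, and then read off the $\gamma^4$ claim from the block structure in the proof of Theorem~8 when $\deg p(x)=2$. The only cosmetic difference is your choice $q(x)=x^2+\sqrt{\alpha}\,x+1$ versus the paper's $q(x)=x^2-\sqrt{\alpha}\,x+1$, which is immaterial since $q(x)q(-x)$ is unchanged; your treatment of the $\gamma^4$ assertion is in fact more explicit than the paper's one-line ``by the proof of Theorem~8, since $\deg(p(x))=2$.''
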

\begin{proof} 
Let $K=\rationalnos(\lambda + \lambda^{-1})$.  Then $\deg_K(\lambda) = 2$ by Theorem \ref{thm:4}(2).  
Hence $\deg_K(\lambda) \leq n+1$ for $n \geq 1$. 
Moreover $\lambda$ is square-rootable over $K$ by Lemma \ref{lem:18}. 
Therefore $\lambda^{\frac{1}{2}} = e^{\ell(\gamma)}$ for some $\gamma \in \Gamma$ given by Theorem \ref{thm:8}. 
Moreover $\gamma^4$ is a hyperbolic translation by the proof of Theorem \ref{thm:8}.    
\end{proof}

The next corollary follows from Theorems \ref{thm:7} and \ref{thm:8}. 

\begin{corollary}  
\label{cor:8}
Let $\Gamma \subseteq \Isom(H^n)$ be an arithmetic lattice of the simplest type defined over $\rationalnos$, with $n$ odd, 
and let $C$ be a closed geodesic in $H^n/\Gamma$. 
Then $\mathrm{length}(C) \geq c_n$,  
and this lower bound is sharp for each odd integer $n \geq 3$. 
\end{corollary}

Corollary \ref{cor:4} follows from Corollary \ref{cor:8} once we have a sharp non-cocompact example for $n = 3$, 
since all arithmetic groups of isometries of $H^n$ of the simplest type over $\rationalnos$ are not cocompact when $n > 3$. 
Such an example will be described in the next section. 


\section{The values of $c_n$ for odd $n \leq 19$} 
\label{sec:9}

We begin by considering some necessary conditions for square-rootability in Lemma \ref{lem:19}, and some sufficient 
conditions for square-rootability in Lemma \ref{lem:20}. 

\begin{lemma}  
\label{lem:19}
Let $\lambda$ be a Salem number, let $K$ be a subfield of $\rationalnos(\lambda +\lambda^{-1})$, 
let $p(x)$ be the minimal polynomial of $\lambda$ over $K$, and let $m = \deg p(x)$.  
Suppose that $\lambda$ is square-rootable over $K$ via $\alpha$ in $K$. 
\begin{enumerate}
\item If $m \equiv 0 \ \mathrm{mod} \ 4$, then $p(-1)$ is a square in $\mathfrak{o}_K$. 
\item If $m \equiv 2 \ \mathrm{mod} \ 4$, then there exists $k \in K^\times$ such that $p(-1) = \alpha k^2$. 
\end{enumerate}
\end{lemma}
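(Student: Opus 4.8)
The plan is to read off $p(-1)$ from the factorization $q(x)q(-x)=p(x^{2})$ by substituting a square root of $-1$. Put $x=i$; then $p(-1)=q(i)q(-i)$. Since $q$ is monic and palindromic of degree $m$ we have $q(x)=x^{m}q(1/x)$, and as $1/(-i)=i$ this gives $q(-i)=(-i)^{m}q(i)$. Because $m$ is even, $(-i)^{m}=(-1)^{m/2}$, so
\[
p(-1)=(-1)^{m/2}\,q(i)^{2}.
\]
Hence $p(-1)=q(i)^{2}$ when $m\equiv 0\bmod 4$ and $p(-1)=-q(i)^{2}$ when $m\equiv 2\bmod 4$. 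This identity is the core of the argument; what remains is to analyse $q(i)^{2}$.

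Next I would split $q$ into even and odd parts, writing $q(x)=\mathcal E(x^{2})+\sqrt{\alpha}\,x\,\mathcal O(x^{2})$, where $\mathcal E$ gathers the even-degree coefficients of $q$ (these lie in $K$, and in fact in $\mathfrak o_{K}$, since $q$ divides the monic polynomial $p(x^{2})$ over $\mathfrak o_{K}$, so all coefficients of $q$ are algebraic integers) and $\sqrt{\alpha}\,\mathcal O$ gathers the odd-degree coefficients, so that $\mathcal O$ has coefficients in $K$. Evaluating at $x=i$ gives $q(i)=u+i\sqrt{\alpha}\,v$ with $u=\mathcal E(-1)\in\mathfrak o_{K}$ and $v=\mathcal O(-1)\in K\subseteq\realnos$. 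As $\alpha$ is totally positive, $\sqrt{\alpha}$ is a positive real number, so $u$ and $\sqrt{\alpha}\,v$ are the real and imaginary parts of $q(i)$ and
\[
q(i)^{2}=(u^{2}-\alpha v^{2})+2uv\sqrt{\alpha}\,i .
\]
Note also that $q(i)\neq 0$, since otherwise $-1=i^{2}$ would be a root of $p$, contradicting that $p$ divides a Salem polynomial.

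Now I would bring in two facts about $p(-1)$. First, $p(-1)\in\mathfrak o_{K}\subseteq\realnos$, so the imaginary part of $(-1)^{m/2}q(i)^{2}$ must vanish, forcing $uv=0$. Second, $p(-1)>0$: the roots of $p$ are $\lambda,\lambda^{-1}>1$ together with conjugate pairs $e^{\pm i\theta}$ with $0<\theta<\pi$, and $(-1-\lambda)(-1-\lambda^{-1})>0$ while $(-1-e^{i\theta})(-1-e^{-i\theta})=2+2\cos\theta>0$. Feeding $uv=0$ into the two cases: if $m\equiv 0\bmod 4$ then $p(-1)=q(i)^{2}$, and $u=0$ would give $p(-1)=-\alpha v^{2}\le 0$, impossible, so $v=0$ and $p(-1)=u^{2}$ is a square in $\mathfrak o_{K}$; if $m\equiv 2\bmod 4$ then $p(-1)=-q(i)^{2}$, and $v=0$ would give $p(-1)=-u^{2}<0$ (with $u\neq 0$ since $q(i)\neq 0$), impossible, so $u=0$ and $p(-1)=-(i\sqrt{\alpha}\,v)^{2}=\alpha v^{2}$ with $v\in K^{\ast}$ (again $v\neq 0$ because $q(i)\neq 0$), so one takes $k=v$.

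The only genuine subtlety is the sign-and-parity bookkeeping: getting $(-i)^{m}=(-1)^{m/2}$ right and then correctly matching the two residue classes of $m$ modulo $4$ to the two conclusions, using the reality and the positivity of $p(-1)$ to discard the wrong sub-case in each. The even/odd decomposition of $q$, the integrality $u\in\mathfrak o_{K}$, and the positivity of $p(-1)$ are all routine.
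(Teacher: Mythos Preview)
Your argument is correct and shares the same core identity as the paper's proof: substituting $x=i$ into $q(x)q(-x)=p(x^{2})$ together with the palindrome relation $q(x)=x^{m}q(1/x)$ gives $p(-1)=(-1)^{m/2}q(i)^{2}$. From there the two proofs diverge slightly. The paper pairs the term $a_{k}i^{k}$ with its palindromic partner $a_{m-k}i^{m-k}=a_{k}i^{m-k}$ and checks directly that for $m\equiv 0\pmod 4$ the odd-degree contributions cancel (so $q(i)\in\mathfrak o_{K}$), while for $m\equiv 2\pmod 4$ the even-degree contributions cancel (so $q(i)=\sqrt{\alpha}\,ki$ with $k\in K$). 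You instead write $q(i)=u+i\sqrt{\alpha}\,v$ and use that $p(-1)$ is real to force $uv=0$, and then the positivity $p(-1)>0$ to pick out which of $u,v$ vanishes in each residue class. Both routes are short; the paper's pairing avoids the need to verify $p(-1)>0$, while yours avoids tracking the palindromic cancellation index-by-index. One small slip: you wrote ``$\lambda,\lambda^{-1}>1$'', but of course $0<\lambda^{-1}<1$; your computation $(-1-\lambda)(-1-\lambda^{-1})=2+\lambda+\lambda^{-1}>0$ is unaffected.
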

\begin{proof}
There exists a monic palindromic polynomial $q(x)$, whose even degree coefficients are in $K$ and whose 
odd degree coefficients are $\sqrt{\alpha}\,K$, such that $q(x)q(-x) = p(x^2)$. 
Hence $p(-1) = q(i)q(-i)$. 
We have that 
$$q(-i) = q(1/i) = i^{-m}i^mq(1/i) = i^{-m}q(i).$$
Hence we have that $p(-1) = i^{-m}q(i)^2$. 

(1)  Assume that $m \equiv 0 \ \mathrm{mod} \ 4$.  Then $i^m=1$.  Hence $p(-1) = q(i)^2$. 
If $k$ is an odd positive integer, then $i^{m-k} = i^{-k} = (-i)^k=  -i^k$. 
Hence the odd degree terms of $q(x)$ cancel in the evaluation of $q(i)$. 
The roots of $q(x)$ are in $\mathbb{A}$, and so the even degree coefficients of $q(x)$ 
are in $\mathbb{A}\cap K = \mathfrak{o}_K$. 
Therefore $q(i) \in  \mathfrak{o}_K$. Hence $p(-1)$ is a square in $\mathfrak{o}_K$.

(2) Assume that $m \equiv 2 \ \mathrm{mod} \ 4$. Then $i^m = -1$.  Hence $p(-1) = -q(i)^2$. 
If $k$ is an even nonnegative integer, then $i^{m-k} = -i^{-k} = -(-i)^k=  -i^k$. 
Therefore the even degree terms of $q(x)$ cancel in the evaluation of $q(i)$. 
Hence there exists $k \in K$ such that $q(i) = \sqrt{\alpha}\,k\, i$. 
Therefore $p(-1) = \alpha k^2$.  As $p(-1) \neq 0$, we have that $k \neq 0$. 
\end{proof}

\begin{lemma} 
\label{lem:20}
Let $\lambda$ be a Salem number, let $K$ be a subfield of $\rationalnos(\lambda +\lambda^{-1})$, 
and let $p(x)$ be the minimal polynomial of $\lambda$ over $K$.  
\begin{enumerate}
\item If $p(x) = x^4 + ax^3 + bx^2+ ax+1$, then $\lambda$ is square-rootable over $K$ if and only if 
there is a positive element $k$ of $\mathfrak{o}_K$ such that $p(-1) = k^2$ 
and $4-a \pm 2k$ is a totally positive element of $K$, 
in which case $\lambda$ is square-rootable over $K$ via $4-a \pm 2k$. 
\item If $\deg p(x) = 4$ and $K = \rationalnos$, then $\lambda$ is square-rootable over $K$ 
if and only if $p(-1)$ is a square in $\integers$. 
\end{enumerate}
\end{lemma}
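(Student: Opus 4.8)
The plan is to prove the three parts of Lemma~22 in turn, using Lemma~17 (square-rootability via a square in $K$ is equivalent to $\lambda^{1/2}$ being a Salem number), Lemma~18 (the degree-$2$/norm $-1$ case is automatically square-rootable), and the explicit formulas in Lemma~21 relating $p(-1)$ to the quantity $q(i)$. Part~(1) is immediate: when $\deg p(x)=2$ we have $p(x)=x^2-(\lambda+\lambda^{-1})x+1$, so setting $\alpha=\lambda+\lambda^{-1}+2$ and $q(x)=x^2-\sqrt{\alpha}\,x+1$ gives a monic palindromic polynomial with even-degree coefficients in $K$, odd-degree coefficient $-\sqrt{\alpha}\in\sqrt{\alpha}K$, and $q(x)q(-x)=(x^2+1)^2-\alpha x^2 = x^4-(\lambda+\lambda^{-1})x^2+1 = p(x^2)$; moreover $\alpha$ is totally positive because each nonidentity embedding sends $\lambda+\lambda^{-1}$ to some $2\cos\theta\in(-2,2)$, exactly as in the proof of Lemma~4, so $\sigma(\alpha)=2+2\cos\theta>0$.

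For part~(2), with $p(x)=x^4+ax^3+bx^2+ax+1$, the idea is to look for a palindromic quartic factorization $q(x)q(-x)=p(x^2)$ with $q$ of degree $4$. Write $q(x)=x^4+ux^3+vx^2+wx+1$; palindromicity forces $w=u$, and by Lemma~21(1) (since $m=4\equiv 0\bmod 4$) square-rootability forces $p(-1)=q(i)^2$ with $q(i)=1-v+1-(u+u)i\cdot(\text{sign bookkeeping})$—more precisely $q(i)=v-2\in\mathfrak{o}_K$ after the odd terms cancel, so $k:=|v-2|$ must satisfy $p(-1)=k^2$ with $k\in\mathfrak{o}_K$ positive. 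Expanding $q(x)q(-x)=(x^4+vx^2+1)^2-(ux^3+ux)^2=(x^4+vx^2+1)^2-u^2x^2(x^2+1)^2$ and matching against $p(x^2)=x^8+ax^6+bx^4+ax^2+1$ yields two equations: $2v-u^2=b$ (from the $x^4$ coefficient, after organizing) and $2v-u^2 v\cdot(\ldots)$—the cleaner route is to match the $x^6$ coefficient to get $2v-u^2=a$ is wrong; rather $2v-u^2$ appears once and $u^2-2u^2$... I would just carry out the coefficient match carefully: the $x^2$ (equivalently $x^6$ by symmetry) coefficient gives $u^2=2v-a$, hence $u^2=2v-a$, and $v-2=\pm k$ gives $v=2\pm k$, so $u^2 = 4\pm 2k - a$. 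Thus a valid $q$ exists over a quadratic extension precisely when $4-a\pm2k$ is a square of some element $\sqrt{\alpha}$, i.e.\ when $\alpha=4-a\pm2k$; and $q$ then has even coefficients in $K$ and odd coefficient $u=\sqrt{\alpha}\in\sqrt{\alpha}K$ as required. For the "only if" direction, Lemma~21(1) gives the square condition on $p(-1)$, and the definition of square-rootability requires the relevant $\alpha$ to be totally positive, forcing $4-a\pm2k$ totally positive for the sign that works; conversely if $4-a+2k$ (or $4-a-2k$) is totally positive, the construction above exhibits square-rootability via that element. One must also check the $x^4$-coefficient equation $v^2+2-u^2=b$ is then automatically consistent, using $p(-1)=1-a+b-a+1 = 2-2a+b = k^2 = (v-2)^2 = v^2-4v+4$ together with $u^2=2v-a$: substituting gives $v^2+2-(2v-a)=v^2-2v+2+a$ and $b = k^2+2a-2 = v^2-4v+4+2a-2 = v^2-4v+2a+2$; these agree iff $-2v+2+a = -4v+2a+2$, i.e.\ $2v=a$, which need not hold—so I expect the actual bookkeeping to pin down $v$ from $b$ rather than from $p(-1)$, and the $p(-1)$ relation becomes the compatibility constraint. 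Getting this coefficient bookkeeping exactly right, and confirming it reproduces precisely the stated criterion, is the main obstacle.

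For part~(3), with $\deg p(x)=4$ and $K=\rationalnos$: the "only if" direction is Lemma~21(1), since $m=4\equiv0\bmod4$ forces $p(-1)$ to be a square in $\mathfrak{o}_\rationalnos=\integers$. For the "if" direction, suppose $p(-1)=k^2$ with $k\in\integers$, $k>0$; since $p(x)$ is the minimal polynomial over $\rationalnos$ of a Salem number of degree $4$ it is palindromic, $p(x)=x^4+ax^3+bx^2+ax+1$ with $a,b\in\integers$, so part~(2) applies. I would then show that at least one of $4-a+2k$, $4-a-2k$ is totally positive (here "totally positive" just means positive, as $K=\rationalnos$). This follows from evaluating $p$ at well-chosen points: note $p(-1)=2-2a+b=k^2\ge0$ and the Salem polynomial has $p(x)>0$ for $x<0$ except at $x=-\lambda^{-1},-\lambda$... actually $-\lambda^{\pm1}$ are not roots, so $p(-1)=k^2>0$ unless... in any case $k^2 = p(-1)>0$. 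The key inequality is that $q(x)=x^4-\sqrt{\alpha}x^3+vx^2-\sqrt{\alpha}x+1$ with $\alpha=4-a\pm2k$ must itself have real roots $\lambda^{\pm1/2}$ and complex roots on the unit circle, which is guaranteed once $q(x)q(-x)=p(x^2)$ holds and $\alpha>0$ (so that $q$ has real coefficients after choosing the sign of $\sqrt\alpha$); one checks $\alpha>0$ for the correct sign by a direct estimate, e.g.\ using that $\lambda^{1/2}+\lambda^{-1/2}>2$ so the value $\alpha$ that actually arises geometrically is $(\lambda^{1/2}+\lambda^{-1/2})^2>4>0$. Concretely, I would argue that the two candidate values $4-a\pm2k$ multiply to $(4-a)^2-4k^2 = (4-a)^2-4p(-1)$, and sum to $8-2a$; computing $p(1)=2+2a+b$ and using $p(1)>0$, $p(-1)=k^2$, one shows the product and sum cannot both be handled by the negative choice, so one choice is positive. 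Assembling these gives the equivalence. The main obstacle across all three parts is the explicit but delicate coefficient algebra in part~(2) and the positivity argument in part~(3); parts~(1) and the "only if" half of~(3) are routine given Lemma~21.
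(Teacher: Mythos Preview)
Your approach matches the paper's in all three parts. The ``obstacle'' you flag in part~(2) is a slip, not a genuine inconsistency: the $x^4$-coefficient of $q(x)q(-x)=(x^4+vx^2+1)^2-u^2x^2(x^2+1)^2$ is $v^2+2-2u^2$, not $v^2+2-u^2$. With $u^2=2v-a$ this gives $v^2-4v+2+2a$, which agrees on the nose with your computed $b=v^2-4v+2a+2$, so the two coefficient equations are consistent and determine $v=2\pm k$, $u^2=4-a\pm 2k$ just as you wanted. This is exactly the paper's computation (with $c,d$ in place of your $u,v$).

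For part~(3), the positivity argument is much simpler than the product/sum manoeuvre you sketch. Since $-a$ is the sum of the four roots of $p$, namely $\lambda+\lambda^{-1}+\mu+\mu^{-1}$ with $\lambda+\lambda^{-1}>2$ and $\mu+\mu^{-1}=2\cos\theta>-2$, we have $-a>0$; hence $4-a+2k>0$ automatically (as $k>0$). That single observation, together with part~(2), finishes~(3), and is precisely how the paper concludes.
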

\begin{proof}
(1) Suppose that $\lambda$ is square-rootable over $K$ via $\alpha$. 
Then $p(-1) = k^2$ with $k \in \mathfrak{o}_K$ by Lemma \ref{lem:19}(1). 
Now $k \neq 0$, since $p(-1) \neq 0$.  
By replacing $k$ with $-k$, if necessary, we may assume that $k > 0$. 
Let 
$$q(x) = x^4 + c x^3 + d x^2 + c x +1$$  
such that $q(x)q(-x) = p(x^2)$ 
with $c = \sqrt{\alpha} \ell$ for some $\ell \in K$ and $d \in K$. 
Then $2d -c^2 = a$ and $2 -2c^2 +d^2 = b$. 
Hence $c^2 = 2d -a$, and so $2-4d + 2a + d^2 = b$. 
Therefore 
$$d = 2 \pm \sqrt{2-2a +b} = 2 \pm \sqrt{p(-1)} = 2 \pm k$$
and 
$$c = \pm \sqrt{2d -a} = \pm \sqrt{4 \pm 2k -a}.$$
As $4-a\pm 2k = c^2 = \alpha\ell^2$, we have that $4-a\pm 2k$ is totally positive. 

Conversely, if $k$ is a positive element of $\mathfrak{o}_K$ such that $p(-1) = k^2$ and 
$4-a\pm 2k$ is totally positive, then we can solve for $c$ and $d$ from the above equations 
and deduce that $\lambda$ is square-rootable over $K$ via $4-a\pm 2k$. 

(2)  By (1) it suffices to show that if $\lambda$ is square-rootable over $K = \rationalnos$, then $4-a+2k$ is positive. 
Let $\lambda^{\pm 1}, \mu^{\pm 1}$ be the roots of $p(x)$. 
Then 
$-a = \lambda+\lambda^{-1}+\mu+\mu^{-1}.$
We have that $\lambda+\lambda^{-1} > 2$ and $\mu + \mu^{-1} = 2\cos\theta$ for some real number $\theta$, 
and so $-a > 0$.  Therefore $4-a+2k > 0$. 
\end{proof}

Recall that for each odd positive integer $n$, we defined
\begin{eqnarray*}
c_n & = & \mathrm{min}\{\textstyle{\frac{1}{2}}\log \lambda : \lambda\  \hbox{is a Salem number with} \ \deg \lambda \leq n+1, \\
& & \hspace{.32in} \hbox{which is square-rootable over}\  \mathbb{Q}\}. 
\end{eqnarray*}
Let $\lambda_{m,\ell}$ be the $\ell$th largest Salem number of degree $m$ listed in \cite{Moss}. 
It follows from Lemma \ref{lem:18} that 
$$c_1  =  \textstyle{\frac{1}{2}}\log \lambda_{2,1} = 0.481211825 \ldots\ .$$

The smallest Salem number of degree 4 with Salem polynomial $p(x)$ such that $p(-1)$ is a square in $\integers$ is 
$$\lambda_{4,6} = \frac{1}{4}\left(1 + \sqrt{21} + \sqrt{2 \big(3 + \sqrt{21}\big)}\right) = 2.3692054071\ldots\ $$
with Salem polynomial 
$$p(x) = x^4 - x^3 - 3x^2-x+1.$$ 
We have that $p(-1) = 1$, and so $\lambda_{4,6}$ is square-rootable over $\rationalnos$ 
via $3$ and $7$ by Lemma \ref{lem:20}. 
Hence we have that 
$$c_3 =  \textstyle{\frac{1}{2}}\log \lambda_{4,6} = 0.4312773138 \ldots \ . $$

To finish the proof of Corollary \ref{cor:4}, we need a non-cocompact arithmetic group $\Gamma$ of isometries of $H^3$ and a hyperbolic element $\gamma$ of $\Gamma$ 
such that $\lambda_{4,6} = e^{2\ell(\gamma)}$. 
The proof of Theorem \ref{thm:8} yields an arithmetic group $\Gamma$ of isometries of $H^3$ of the simplest type over $\rationalnos$ 
and a hyperbolic element $\gamma$ of $\Gamma$ 
such that $\lambda_{4,6} = e^{2\ell(\gamma)}$. 
A conjugate of $\Gamma$ is commensurable to $\mathrm{O}'(f,\integers)$ where the quadratic form $f$ has coefficient matrix 
$$A = \frac{1}{2}\left(\begin{array}{cccc} 4 & 1 & 7 & 13 \\ 1 & 4 & 1 & 7 \\ 7 & 1 & 4 & 1 \\ 13 & 7 & 1 & 4 \end{array}\right).$$
The only solution of $f(x) \equiv 0\ \mathrm{mod}\ 7$ with $x \in \integers^4$ is $x \equiv (0,0,0,0) \ \mathrm{mod}\ 7$. 
Hence, by a descent argument, the only solution of $f(x) = 0$ with $x \in \integers^4$ is $x = (0,0,0,0)$, 
and so the only solution of $f(x) = 0$ with $x \in \rationalnos^4$ is $x = (0,0,0,0)$. 
Therefore $\mathrm{O}'(f,\integers)$ is cocompact, and so $\Gamma$ is cocompact, 
which is not what we need. 

Let $K = \rationalnos(\sqrt{-3})$ and let $\omega = (1+i\sqrt{3})/2$.  
Then $\mathfrak{o}_K = \{a + b\omega: a, b \in \integers\}$. 
The group $\mathrm{PSL}(2,\mathfrak{o}_K)$ is a non-cocompact arithmetic group of isometries 
of the upper half-space model of hyperbolic 3-space which contains a loxodromic hyperbolic element $\eta$ 
represented by the matrix
$$\left(\begin{array}{cc} 0 & 1 \\ -1 & \omega \end{array}\right).$$
We have that $\lambda_{4,6} = e^{\ell(\eta)}$ (see\,\cite{N-R} \S 4). 
By Theorem 4.11 of \cite{N-R} there is a subgroup $\Gamma$ of $\mathrm{PSL}(2,\complexnos)$ 
that is commensurable to $\mathrm{PSL}(2,\mathfrak{o}_K)$ and a hyperbolic element $\gamma$ of $\Gamma$ 
such that $\gamma^2 = \eta$, and so $\lambda_{4,6} = e^{2\ell(\gamma)}$. 
Thus the bound $c_n$ is sharp in Corollary \ref{cor:4} for $n = 3$. 

In order to find the values of $c_n$ for $n > 4$, we need to determine when a Salem number of degree greater than 4 is square-rootable over $\rationalnos$. 
In this regard, the necessary conditions in Lemma \ref{lem:19} are useful. 
In practice, we used a more systematic method which we describe next. 

Let $p(x)$ be a Salem polynomial for a Salem number $\lambda$ of degree $m > 4$, and let $\ell = m/2$. 
Let $r_1, r_1^{-1}, \ldots, r_\ell, r_\ell^{-1}$ be the roots of $p(x)$ with $r_1 = \lambda$.  
Choose complex numbers $s_1, s_2, \ldots, s_\ell$ so that $s_j^2 = r_j$ for each $j$ and $s_1 = \lambda^{\frac{1}{2}}$. 
There are two choices for each $s_{j}$ with $1 < j \leq \ell $ and so there are a total of $2^{\ell-1}$ choices. 
Let 
$$q(x) = (x^2-(s_1+s_1^{-1})x +1) \cdots(x^2-(s_\ell+s_\ell^{-1})x +1).$$
Then we have that $q(x)q(-x) = p(x^2)$. 
In order for $\lambda$ to be square-rootable over $\rationalnos$, 
the even degree coefficients of $q(x)$ must be integers and the squares of the odd degree coefficients of $q(x)$ 
must be integers with the same square-free part for some choice of $s_1, \ldots, s_\ell$. 
These conditions can be checked numerically. 

We determined that the smallest Salem number of degree 6 that is square-rootable over $\rationalnos$ is 
$$\lambda_{6,4} = 1.5823471836 \ldots$$
with Salem polynomial 
$$p(x) = x^6  - x^4 - 2 x^3 - x^2 + 1$$ 
and 
$$q(x) = x^6 -\sqrt{2}\, x^5 + x^4 -\sqrt{2}\, x^3 + x^2 -\sqrt{2}\, x + 1.$$
Hence we have that
$$c_5 =  \textstyle{\frac{1}{2}}\log \lambda_{6,4} = 0.2294546519 \ldots\ .$$

The smallest Salem number of degree 8 that is square-rootable over $\rationalnos$ is 
$\lambda_{8,8} = \lambda_{8,1}^2$.  As $c_5 < b_8$, we have that $c_5 = c_7$. 

The smallest Salem number of degree 10 that is square-rootable over $\rationalnos$ is 
$\lambda_{10,8} = \lambda_{10,1}^2$. 
Hence we have that 
$$c_9 = b_{10} = 0.1623576120\ldots\ .$$ 

The smallest Salem number of degree 12 that is square-rootable over $\rationalnos$ is 
$\lambda_{12,16} = \lambda_{12,1}^2$. 
Hence we have that $c_{11} = b_{10}$. 

The smallest Salem number of degree 14 that is square-rootable over $\rationalnos$ is 
$\lambda_{14,17} = \lambda_{14,1}^2$. 
Hence we have that $ c_{13} = b_{10}$. 

The smallest Salem number of degree 16 that is square-rootable over $\rationalnos$ is 
$$\lambda_{16, 23} = 1.4908316618 \ldots $$
with Salem polynomial 
$$p(x) = x^{16}-x^{14}-x^{12}-2x^{11}-x^8-2x^5-x^4-x^2+1$$ 
and 
$$q(x) = x^{16} - \sqrt{2}\,x^{15}+ x^{14}- \sqrt{2}\,x^{13} + x^{12}-x^8+x^4- \sqrt{2}\,x^{3}+ x^2 - \sqrt{2}\,x +1.$$
We have that $\textstyle{\frac{1}{2}}\log \lambda_{16,23} = 0.19966 \ldots\ $, 
and so we have that $c_{15} = b_{10}$. 

The smallest Salem number of degree 18 that is square-rootable over $\rationalnos$ is 
$\lambda_{18,22} = \lambda_{18,1}^2$. 
Hence we have that $c_{17} = b_{10}$. 

The smallest Salem number of degree 20 that is square-rootable over $\rationalnos$ is 
$\lambda_{20, 74} = \lambda_{20,1}^2$. 
Hence we have that $c_{19} = b_{10}$. 


\section{An example with $K$ an intermediate field} 
\label{sec:10}

All the examples of a Salem number $\lambda$ that is square-rootable over $K$  
that we have considered so far have been with $K = \rationalnos$ or $\rationalnos(\lambda+\lambda^{-1})$. 
In this section, we consider an example with $K$ an intermediate field between $\rationalnos$ and 
$\rationalnos(\lambda+\lambda^{-1})$. 
Consider the polynomial 
$$q(x) = x^4 -4 x^3 - 4x^2 + 4x +1.$$
The polynomial $q(x)$ is irreducible over $\integers$ and has three roots that lie in the open interval $(-2,2)$ 
and one root that is greater than 2. 
Hence $q(x)$ is the trace polynomial of the Salem polynomial 
$$p(x) =  x^4q(x+ x^{-1}) = x^8 - 4x^7-8x^5-x^4-8x^3-4x+1.$$
The corresponding Salem number has value $\lambda = 4.43861 \ldots,$ 
and $q(x)$ is the minimal polynomial of 
$$\lambda+\lambda^{-1}=1+\sqrt{3}+\sqrt{2+\sqrt{3}} = 4.6639\ldots\ .$$

Now $\lambda^{1/2}= 2.1068\ldots $ has minimal polynomial 
$p(x^2)$, since $p(x^2)$ is irreducible over $\integers$,   
and so $\lambda^{1/2}$ is not a Salem number by Lemma \ref{lem:12}.
The polynomial
$$g(x)=q(x^2-2)=x^8-12x^6+44x^4-60x^2+25$$
is also irreducible over $\integers$. 
As 
$$(\lambda^{\frac{1}{2}}+\lambda^{-\frac{1}{2}})^2 = \lambda+\lambda^{-1}+2,$$ 
we have that $g(x)$ is the minimal polynomial of 
$$\lambda^{\frac{1}{2}}+\lambda^{-\frac{1}{2}}=\sqrt{3+\sqrt{3}+\sqrt{2+\sqrt{3}}}= 2.58145\ldots\ .$$

Now we have the factorizations

\begin{eqnarray*}
q(x)&=&\left(x^2-\big(2+\sqrt{2}\big)x-\big(3+2\sqrt{2}\big)\right) 
\left(x^2-\big(2-\sqrt{2}\big)x-\big(3-2\sqrt{2}\big)\right)\\
&=&\left(x^2-\big(2+2\sqrt{3}\big)x+\big(2+\sqrt{3}\big)\right)
\left(x^2-\big(2-2\sqrt{3}\big)x+\big(2-\sqrt{3}\big)\right)\\
&=&\left(x^2-\big(2+\sqrt{6}\big)x-1\right)
\left(x^2-\big(2-\sqrt{6}\big)x-1\right)
\end{eqnarray*}
with $\lambda+ \lambda^{-1}$ a root of the first factor in each case.  
This implies that $\sqrt{2}, \sqrt{3}, \sqrt{6} \in \rationalnos(\lambda+\lambda^{-1})$ 
and that $\rationalnos(\lambda+\lambda^{-1})$ is the splitting field of $q(x)$. 
The polynomial $q(x)$ was carefully chosen so that its Galois group is a Klein four group. 
Hence the intermediate fields between $\rationalnos$ and $\rationalnos(\lambda+\lambda^{-1})$ 
are $\rationalnos(\sqrt{2})$, $\rationalnos(\sqrt{3})$ and $\rationalnos(\sqrt{6})$ corresponding 
to the three subgroups of $\mathrm{Gal}(q(x))$ of index 2 by the fundamental theorem of Galois theory.  

Let $K$ be one of the fields $\rationalnos(\sqrt{2})$, $\rationalnos(\sqrt{3})$ or $\rationalnos(\sqrt{6})$, 
and let $q_K(x)$ be the first factor of the above factorization of $q(x)$ over $K$. 
Then the minimal polynomial of $\lambda$ over $K$ is 
$$p_K(x) = x^2q_K(x+x^{-1}).$$
If $K = \rationalnos(\sqrt{2})$, then $p_K(-1) = q_K(-2) = 5$, which is not a square in $\mathfrak{o}_K$, 
and so $\lambda$ is not square-rootable over $\rationalnos(\sqrt{2})$ by Lemma \ref{lem:19}(1). 
If $K = \rationalnos(\sqrt{3})$, then $p_K(-1) = q_K(-2) = 10 + 5\sqrt{3}$, which is not a square in $\mathfrak{o}_K$, 
and so $\lambda$ is not square-rootable over $\rationalnos(\sqrt{3})$ by Lemma \ref{lem:19}(1). 

Now suppose that $K = \rationalnos(\sqrt{6})$, then 
$$p_K(-1) = q_K(-2) = 7+2\sqrt{6} = \big(1+\sqrt{6}\big)^2.$$
We have that 
$$p_K(x) = x^4-\big(2+\sqrt{6}\big)x^3+x^2-\big(2+\sqrt{6}\big)x+1.$$
Then $\lambda$ is square-rootable over $K$ 
via $\alpha = 4 -\sqrt{6}$ and $\beta = 8 +3\sqrt{6}$ by Lemma \ref{lem:20}(1).  
Note that $\alpha(5+2\sqrt{6}) = \beta$ with $5+2\sqrt{6}$ the fundamental unit of $\o_K$. 

Let $h(x) = x^4-\big(6-\sqrt{6})x^2 + \big(7-2\sqrt{6}\big)$. 
Then we have the factorizations
\begin{eqnarray*}
g(x)&=&\left(x^2-\sqrt{\alpha}\,x-\big(1+\sqrt{6}\big)\right)
\left(x^2+\sqrt{\alpha}\,x-\big(1+\sqrt{6}\big)\right) h(x) \\
&=&\left(x^2-\sqrt{\beta}\,x+\big(1+\sqrt{6}\big)\right)
\left(x^2+\sqrt{\beta}\,x+\big(1+\sqrt{6}\big)\right) h(x)
\end{eqnarray*}
with $\lambda^{1/2}+\lambda^{-1/2}$ a root of the first factor in each case. 
This implies that $\sqrt{\alpha}, \sqrt{\beta} \in \rationalnos(\lambda^{\frac{1}{2}}+\lambda^{-\frac{1}{2}})$. 
The lattice of considered subfields of ${\mathbb Q}\left(\lambda^{1/2}+\lambda^{-1/2}\right)$ is shown
in Figure 1, with each line indicating a degree 2 extension. 

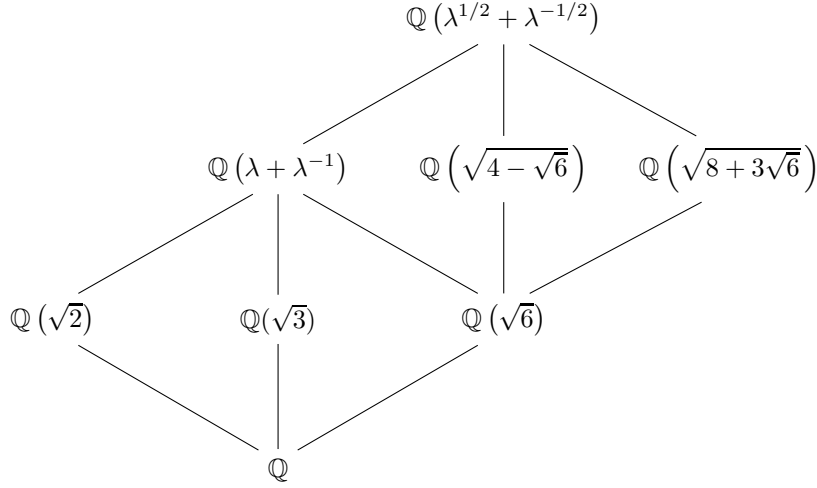
\begin{figure}\label{fig1}
\begin{tikzpicture}
  [node distance=2cm and 3cm,rect/.style={rectangle}]
  \node[rect] (F0) {$\mathbb Q$};
  \node[rect] (F2) [on grid,above=of F0] {${\mathbb Q}(\sqrt{3})$};
  \node[rect] (F1) [on grid,left=of F2] {${\mathbb Q}\left(\sqrt{2}\right)$};
  \node[rect] (F3) [on grid,right=of F2] {${\mathbb Q}\left(\sqrt{6}\right)$};
  \node[rect] (F4) [on grid,above=of F2] {${\mathbb Q}\left(\lambda+\lambda^{-1}\right)$};
  \node[rect] (F5) [on grid,right=of F4] {${\mathbb Q}\left(\sqrt{4-\sqrt{6}}\,\right)$};
  \node[rect] (F6) [on grid,right=of F5] {${\mathbb Q}\left(\sqrt{8+3\sqrt{6}}\,\right)$};
  \node[rect] (F7) [on grid,above=of F5] {${\mathbb Q}\left(\lambda^{1/2}+\lambda^{-1/2}\right)$};
  \draw (F0.135) -- (F1.-45);
  \draw (F0.90) -- (F2.-90);
  \draw (F0.45) -- (F3.-135);
  \draw (F1.45) -- (F4.-135);
  \draw (F2.90) -- (F4.-90);
  \draw (F3.135) -- (F4.-45);
  \draw (F3.90) -- (F5.-90);
  \draw (F3.45) -- (F6.-130);
  \draw (F4.45) -- (F7.-135);
  \draw (F5.90) -- (F7.-90);
  \draw (F6.135) -- (F7.-45);
\end{tikzpicture}
\caption{Lattice of considered subfields of $\rationalnos(\lambda^{\frac{1}{2}}+\lambda^{-\frac{1}{2}})$} 
\end{figure}

Let $r_1, r_2, r_3 = \lambda^{-1}$, and $r_4=\lambda$, be the four roots of $p_K(x)$. 
Let 
$$A = \left(\sum_{k=1}^4 r_k^{i-j}/2\right).$$
The matrix $A$ is a symmetric function of the roots of $p_K(x)$, and so
the entries of $A$ can expressible in terms of the coefficients of $p_K(x)$ by Newton identities, 
which works out to be
$$A=\frac{1}{2}\left(\begin{matrix}
4&2+\sqrt{6}&8+4\sqrt{6}&44+18\sqrt{6}\\
2+\sqrt{6}&4&2+\sqrt{6}&8+4\sqrt{6}\\
8+4\sqrt{6}&2+\sqrt{6}&4&2+\sqrt{6}\\
44+18\sqrt{6}&8+4\sqrt{6}&2+\sqrt{6}&4
\end{matrix}\right).$$
The matrix $A$ has signature $(3,1)$, and the automorphism of $K$ taking $\sqrt{6}$ to $-\sqrt{6}$ takes $A$ to a positive definite matrix, and so the corresponding quadratic form $f(x) = x^tAx$ over $K$ is admissible. 

Let $L = K(\sqrt{\alpha}) = \rationalnos(\sqrt{\alpha})$. 
The minimal polynomial of $\lambda^{\frac{1}{2}}+\lambda^{-\frac{1}{2}}$ over $L$ is 
$$g_L(x) = x^2-\sqrt{\alpha}\,x-(1+\sqrt{6}).$$
The minimal polynomial of $\lambda^{\frac{1}{2}}$ over $L$ is 
$$q_L(x) = x^2g_L(x+ x^{-1}) = x^4 -\sqrt{\alpha}\,x^3 + (1-\sqrt{6})x^2-\sqrt{\alpha}\,x+1,$$
and $q_L(x)q_L(-x) = p_K(x^2)$ showing that $\lambda$ is square-rootable over $K$ via $\alpha$. 

Let $s_1, s_2, s_3=\lambda^{-1/2}$, and $s_4=\lambda^{1/2}$ be the roots of $q_L(x)$, taken in order so that $s_k^2=r_k$ each $k$.
Let $V$ be the Vandermonde matrix $(r_i^{j-1})$, and let
$$D =V^{-1}{\rm diag}(s_1,s_2,s_3,s_4)V.$$
Then we find that 
$$D = \frac{1}{5\sqrt{\alpha}}\left(\begin{matrix}
6-\sqrt{6}&-1+\sqrt{6}&1-\sqrt{6}&-6+\sqrt{6}\\
3-3\sqrt{6}&2-2\sqrt{6}&3+2\sqrt{6}&7+3\sqrt{6}\\
3+2\sqrt{6}&2-2\sqrt{6}&3-3\sqrt{6}&-3+3\sqrt{6}\\
1-\sqrt{6}&-1+\sqrt{6}&6-\sqrt{6}&9+\sqrt{6}
\end{matrix}\right).
$$
Then $D \in \mathrm{O}'(f,\realnos)$ and $D^2 = C$ with $C$ the companion matrix of $p_K(x)$ 
given by
$$C=\left(\begin{matrix}
0&0&0&-1\\
1&0&0&2+\sqrt{6}\\
0&1&0&-1\\
0&0&1&2+\sqrt{6}
\end{matrix}\right).$$ 
The matrices $E=5\sqrt{\alpha}D$ and $C$ are over $\mathfrak{o}_K$, 
and so $C \in \mathrm{O}'(f,\mathfrak{o}_K)$. 
Let $\varepsilon = 5^2\alpha$. 
As in the proof of Theorem \ref{thm:8}, let $\Phi$ be the congruence $\varepsilon$ subgroup of $\mathrm{O}'(f,\mathfrak{o}_K)$ 
and let $\Delta$ be the subgroup of $\mathrm{O}'(f,\realnos)$ generated by $D$ and $\Phi$. 
Then $\Delta$ is commensurable to $\mathrm{O}'(f,\mathfrak{o}_K)$. 
Hence $\Gamma = W\Delta W^{-1}$ is an arithmetic group of isometries of $H^3$ of the simplest type defined over $K$,  
and $\gamma = WDW^{-1}$ is an orientation preserving loxodromic hyperbolic element of $\Gamma$ such that 
$\lambda^{\frac{1}{2}} = e^{\ell(\gamma)}$. 

Likewise for $L = \rationalnos(\sqrt{\beta})$, we derive a similar conclusion with 
$$D = \frac{1}{5\sqrt{\beta}}\left(\begin{matrix}
4+\sqrt{6}&1-\sqrt{6}&-1+\sqrt{6}&-4-\sqrt{6}\\
7+3\sqrt{6}&8+2\sqrt{6}&-3-2\sqrt{6}&13+7\sqrt{6}\\
-3-2\sqrt{6}&8+2\sqrt{6}&7+3\sqrt{6}&-7-3\sqrt{6}\\
-1+\sqrt{6}&1-\sqrt{6}&4+\sqrt{6}&21+9\sqrt{6}
\end{matrix}\right).$$

\end{document}